\newtheorem{theorem}{Theorem}[section]
\newtheorem{corollary}[theorem]{Corollary}
\newtheorem{assumption}[theorem]{Assumption}
\newtheorem{proposition}[theorem]{Proposition}
\newtheorem{lemma}[theorem]{Lemma}
\newtheorem{definition}[theorem]{Definition}
\def\cA{\mathcal{A}}
\def\cB{\mathcal{B}}
\def\cC{\mathcal{C}}
\def\cE{\mathcal{E}}
\def\cF{\mathcal{F}}
\def\cG{\mathcal{G}}
\def\cH{\mathcal{H}}
\def\cI{\mathcal{I}}
\def\cN{\mathcal{N}}
\def\cP{\mathcal{P}}
\def\cR{\mathcal{R}}
\def\cS{\mathcal{S}}
\def\cU{\mathcal{U}}
\def\bD{\mathbb{D}}
\def\bE{\mathbb{E}}
\def\bL{\mathbb{L}}
\def\bN{\mathbb{N}}
\def\bP{\mathbb{P}}
\def\bR{\mathbb{R}}
\def\e{\varepsilon}
\newcommand{\les}{\lesssim}
\newcommand{\s}{\sigma}
\begin{document}

\title{Gaussian fluctuations for the nonlinear stochastic heat equation 
with drift}

\author{Raluca M. Balan\footnote{Corresponding author. University of Ottawa, Department of Mathematics and Statistics, 150 Louis Pasteur Private, Ottawa, ON, K1N 6N5, Canada. E-mail address: rbalan@uottawa.ca.} \footnote{Research supported by a grant from the Natural Sciences and Engineering Research Council of Canada.}
\and
Michael Salins\footnote{Boston University, Department of Mathematics and Statistics, 665 Commonwealth Avenue, Boston, MA, 02215 , USA. E-mail address: msalins@bu.edu.}
\footnote{Research supported by Simons Foundation grant \#962543.}
}

\date{December 16, 2025}
\maketitle

\begin{abstract}
\noindent In this article, we prove the Quantitative Central Limit Theorem (QCLT) for the spatial average of the solution of the nonlinear stochastic heat equation with constant initial condition, driven by space-time Gaussian white noise in dimension 1. The novelty is that the equation contains a drift term. We assume that the drift and diffusion coefficients are twice differentiable with bounded first and second order derivatives. For the proof, we use Malliavin calculus, and the second-order Poincar\'e inequality due to \cite{vidotto20}. To estimate the moment of the second Malliavin derivative of the solution, we develop a novel estimate for the product of two heat kernels, which is of independent interest. Finally, we provide the functional result corresponding to this CLT.
\end{abstract}

\noindent {\em MSC 2020:} Primary 60H15; Secondary 60F05, 60G60, 60H07

\vspace{1mm}

\noindent {\em Keywords:} stochastic partial differential equations, stochastic heat equation, random fields, Malliavin calculus, quantitative central limit theorem


\section{Introduction}

In the recent years, there has been a lot of interest in examining the asymptotic behaviour
as $R \to \infty$ of the {\em spatial average}:
\[
F_R(t)=\int_{|x|\leq R}\big( u(t,x)-\bE[u(t,x)]\big)dx
\]
associated to the mild solution $\{u(t,x);t\geq 0,x\in\bR^d\}$ of a stochastic partial differential equation (SPDE), interpreted as a random field, in the Walsh-Dalang sense \cite{walsh86,dalang99}. 
When the noise is spatially homogeneous and the initial condition is constant, for any fixed $t>0$ the solution $\{u(t,x)\}_{x\in \bR}$ is strictly stationary. With little additional effort, using the latest developments based Malliavin calculus (see e.g. \cite{CKNP21}), it can also be proved that $\{u(t,x)\}_{x\in \bR}$ is spatially ergodic. Hence, by
the mean ergodic theorem, the following law of large numbers holds: 
\[
\frac{1}{R}F_R(t) \to 0 \quad \mbox{a.s. and in $L^2(\Omega)$}, \ \mbox{as $R \to \infty$}.
\]

A natural question is whether the family $\{F_R(t)\}_{R>0}$ also satisfies a central limit theorem. To investigate this, a novel approach was proposed in \cite{HNV20}, combining Malliavin calculus with Stein's method for normal approximation. While this methodology was extensively developed in the monograph \cite{NP12}, its first application to SPDEs appeared in the seminal work \cite{HNV20}.

The original method of \cite{HNV20} was developed for the {\em stochastic heat equation} (SHE):
\begin{equation}
\label{SHE1}
\frac{\partial u}{\partial t} (t,x)=\frac{1}{2}\frac{\partial^2 u}{\partial x^2} (t,x) +\sigma\big(u(t,x)\big) \dot{W}(t,x), \quad t>0,x\in \bR
\end{equation}
with initial condition $u(0,x)=1$, where $\dot{W}$ is the space-time Gaussian white noise and $\sigma$ is a globally Lipschitz function. We recall that the solution of \eqref{SHE1} satisfies:
\[
u(t,x)=1+\int_0^t \int_{\bR}G_{t-s}(x-y)\sigma \big(u(s,y)\big)W(ds,dy),
\]
where the stochastic integral is interpreted in the It\^o sense, and $G$ is the heat kernel:
\[
G_t(x)=\frac{1}{\sqrt{2\pi t}} \exp\left(-\frac{|x|^2}{2t}\right), \quad t>0,x\in \bR.
\]

The main result of \cite{HNV20}, the so-called {\em Quantitative Central Limit Theorem} (QCLT) states that for any $t>0$,
\begin{equation}
\label{QCLT1} 
d_{TV}\left( \frac{F_R(t)}{\sigma_R(t)},Z\right)\les R^{-1/2},
\end{equation}
where $d_{TV}$ is the total variation distance, $\sigma_R^2(t)=\bE[F_R^2(t)]$ and $Z$ is a standard normal random variable. Moreover, the authors of \cite{HNV20} showed that $\sigma_R^2(t) \sim R$. Here, we write $a_R \les b_R$, if $a_R \leq Cb_R$ for all $R>0$, and $a_R \sim b_R$ if $\lim_{R\to \infty}a_R/b_R \in (0,\infty)$.
We recall that $d_{TV}$ is one of the metrics used for proving convergence in distribution, and is defined as follows: for any random variables $X$ and $Y$, 
\[
d_{TV}(X,Y)=\sup_{B \in \cB(\bR)}\big|\bP(X \in B)-\bP(Y \in B)\big|,
\]
where $\cB(\bR)$ is the class of Borel subsets of $\bR$.

\medskip

Subsequently, the QCLT problem has been studied for other models, possibly in higher dimensions. We recall the most important contributions below. 

\medskip

The case of (SHE) with Gausssian noise, which is white in time and colored/rough in space was studied in \cite{HNVZ20}, respectively \cite{NXZ22}. The QCLT problem for (SHE) with $\sigma(u)=u$, also called the {\em parabolic Anderson model} (pAm), was studied in \cite{NXZ22,NZ20} for the colored noise in space, and in \cite{NSZ20,NXZ22} for the rough noise in space. The QCLT problem for the {\em stochastic wave equation} (SWE):
\[
\frac{\partial^2 u}{\partial t^2} (t,x)=\Delta u(t,x) +\sigma\big(u(t,x)\big) \dot{W}(t,x), \quad t>0,x\in \bR^d
\]
with initial condition $u(0,x)=1$, $\frac{\partial u}{\partial t} (0,x)=0$, and a globally Lipschitz diffusion coefficient 
$\sigma$, driven by Gaussian noise that is white in time, was investigated in \cite{DNZ, BNZ, NZ22} for the colored noise in space in dimensions 
$d \leq 2$, when the spatial correlation of the noise is given by the Riesz kernel, or an integrable function. These results were extended to dimensions $d=3$, respectively $d\geq 4$ in \cite{ebina24,ebina25a}, for the Riesz kernel.  For the colored noise in time, the QCLT problem for (SWE) with $\sigma(u)=u$, also called the {\em hyperbolic Anderson model} (hAm), was treated in \cite{BNQZ22,BHWXY25} for the colored/rough noise in space, respectively. Finally, the QCLT for time-independent Gaussian noise was analyzed in \cite{BY23} for (pAm), and  in \cite{BY22, BY24} for (hAm). 
These results can be summarized as follows: (see the table in \cite{BHWXY25})
\begin{description}
\item[(a)] If the spatial covariance of the noise is given by the Riesz kernel $\gamma(x)=|x|^{-\beta}$ for $\beta \in (0,d)$, then the rate of convergence in QCLT is $R^{-\beta/2}$ and $\sigma_R(t) \sim R^{d-\beta/2}$. (The case $d=\beta=1$ corresponds to the space-time Gaussian white noise.)

\item[(b)] If the spatial covariance of the noise is given by a function $\gamma \in L^1(\bR^d)$, 
then the rate of convergence in QCLT is $R^{-d/2}$ and $\sigma_R(t) \sim R^{d/2}$.

\item[(c)] If $d=1$ and the noise is rough in space (i.e. it behaves in space like the fractional Brownian motion with Hurst index $H<1/2$), then the rate of convergence in QCLT is $R^{-1/2}$ and $\sigma_R(t) \sim R^{1/2}$.
\end{description}

These results are valid for both (SHE) and (SWE), regardless of whether the noise is white in time, colored in time, or time-independent. This means the that the rate of convergence in QCLT and the order of magnitude of $\sigma_R(t)$ are sensitive with respect to the spatial structure of the noise, but are robust with respect to the differential operator of the equation, and the time structure of the noise.

\medskip

The main estimate of \cite{HNV20} is based on the {\em Stein-Malliavin bound}: for any random variable $F \in \bD^{1,2}$ such that $\bE[F^2]=\sigma^2>0$ and $F=\delta(v)$ for some $v \in {\rm Dom}(\delta)$, 
\begin{equation}
\label{SM}
d_{TV}\left(\frac{F}{\sigma},Z\right) \leq \frac{2}{\sigma^2} \bE|\sigma^2 -\langle DF,v\rangle_{\cH}| \leq {\frac{2}{\sigma^2}} \sqrt{{\rm Var}\big(\langle DF,v\rangle_{\cH}\big)}.
\end{equation}
where $\cH=L^2(\bR_{+}\times \bR)$, $(D,\delta)$ are the Malliavin derivative, respectively the Skorohod integral with respect to $W$, and $\bD^{1,2},{\rm Dom}(\delta)$ are the respective domains of these operators. We refer the reader to Section \ref{section-Malliavin} below for their precise definitions. The second inequality in \eqref{SM} is due to Cauchy-Schwarz inequality, since by the duality between $D$ and $\delta$,
\begin{equation}
\label{var}
\bE[\langle DF, v\rangle_{\cH}]=\bE[F \delta(v)]=\sigma^2.
\end{equation} 

The insight of \cite{HNV20} was to represent 
\begin{equation}
\label{FR-rep}
F_R(t)=\delta(v_{R,t}) \quad \mbox{with} \quad
v_{R,t}(s,y)=\sigma(u(s,y))\int_{|x|<R}G_{t-s}(x-y) dx,
\end{equation}
and then bound the variance in \eqref{SM} using the Clark-Ocone formula (given by \eqref{CK} below) and the key estimate for the Malliavin derivative of the solution: for $p\geq 2$, $0<r<t\leq T$,
\begin{equation}
\label{key-intro}
\|D_{r,z}u(t,x)\|_p \les G_{t-r}(x-z),
\end{equation}
where $\|\cdot\|_p$ is the $L^p(\Omega)$-norm. Representation \eqref{FR-rep} was subsequently used in all other references dedicated to the QCLT problem, when the Gaussian noise is {\em white in time}, and possibly colored in space, replacing $\cH$ by the underlying Hilbert space of the noise.

\medskip

When the noise is {\em colored in time}, the existence of the solution is only known in the case $\sigma(u)=u$. In this case, the method based on \eqref{FR-rep} and Clark-Ocone formula does not yield a suitable estimate. A new idea was introduced in \cite{BNQZ22}, which works for a variety of models, including the space-time Gaussian white noise, and can be summarized as follows:
\begin{description}
\item[(i)] use a basic fact from Malliavin calculus which says that any centered random variable $F\in L^2(\Omega)$ can be written as $F=\delta(v)$ with $v=-DL^{-1}F$, where $L^{-1}$ is the pseudo-inverse of the OU generator, 
     and represent $F_R(t)$ in this way, instead of \eqref{FR-rep};

\item[(ii)] to estimate the variance in \eqref{SM}, use the recent sharp bound for ${\rm Var}\big(\langle DF,-DL^{-1}F\rangle_{\cH}\big)$ obtained by Vidotto in \cite{vidotto20} for the space-time Gaussian white noise, called the {\em second-order Gaussian Poincar\'e inequality}, or an adaptation of this bound for the noise used in the equation, as for instance, Proposition 1.8 of \cite{BNQZ22} for the colored noise in space and time, or Proposition 2.4 of \cite{NXZ22} for the rough noise in space, colored in time;
    
\item[(iii)] depending on the form of the bound in {\bf (ii)}, develop a suitable estimate for the second-order Malliavin derivative of the solution, and use it in conjunction with the estimate \eqref{key-intro}. For instance, for the colored noise in space and time, the estimate given by Theorem 1.3 of \cite{BNQZ22} states that for $p\geq 2$ and $0<\theta<r<t\leq T$,
\begin{equation}
\label{key-D2-intro}
\|D_{(\theta,w),(r,z)}^2 u(t,x)\|_p \les G_{t-r}(x-z)G_{r-\theta}(z-w),
\end{equation} 

\end{description}

We conclude the literature review by mentioning some other related contributions.
The CLT for spatial averages of the form:
\[
\int_{[0,R]^d} \big(g(u(t,x))-\bE[g(u(t,x))]\big) dx
\]
was proved in \cite{CKNP23} for (SHE) in dimension $d\geq 1$, driven by Gaussian white noise, colored in time, when $g$ is a globally Lipschitz function. The QCLT for (hAm) in dimension $d=1$, driven by a finite-variance L\'evy noise was proved in \cite{BZ24}, using the recent advances Malliavin calculus on the Poisson space due to \cite{trauthwein25}.  
A large deviations principle for the solutions to (SHE) and (SWE) driven by a Gaussian noise which is white in time and colored in space has been recently obtained in \cite{ebina25b}.

Assuming that $\bE[u(t,x)]=1$, and letting $R=\e^{-1}$ and $g(x)=1_{\{|x|\leq 1\}}$, we see that
\[
\frac{F_R(t)}{R^{d/2}}=\frac{1}{\e^{d/2}}\int_{\bR^d} \left[u\left(t,\frac{x}{\e}\right)-1\right]
g(x)dx,
\]
which converges to a normal distribution as $\e \to 0$,
in case {\bf (b)} mentioned above. A different fluctuation result was obtained in \cite{gu-li20}, which involves also time rescaling: if $u$ is the solution of (SHE) in dimension $d\geq 3$ with initial condition 1, driven by Gaussian noise white in time and colored in space with spatial covariance kernel $\gamma =\phi *\phi$ for some $\phi \in C_c^{\infty}(\bR^d)$, then 
\[
\frac{1}{\e^{d/2-1}}\int_{\bR^d} \left[u\left(\frac{t}{\e^2},\frac{x}{\e}\right)-1\right]
g(x)dx \Longrightarrow  \int_{\bR^d} \cU(t,x)g(x)dx \quad \mbox{as $\e \to 0$},
\]
for any $g \in C_c^{\infty}(\bR^d)$, where $\cU$ is the solution of the Edwards-Wilkinson equation.

\bigskip

All the studies cited above deal with SPDEs without a drift term. 
The goal of the present article is to establish the QCLT for an equation \underline{with drift}, which to the best of our knowledge, has not been treated before. For simplicity, we consider (SHE) driven by space-time Gaussian white noise in dimension $d=1$, and postpone the study of the same problem for higher dimensions with colored or rough noise for future work.

\medskip

More precisely, we consider the equation:
\begin{align}
\label{SHE}
	\begin{cases}
		\dfrac{\partial u}{\partial t} (t,x)
		=  \dfrac{1}{2}\dfrac{\partial^2 u}{\partial x^2} (t,x)+ b\big(u(t,x)\big)+\s\big(u(t,x)\big) \dot{W}(t,x), \
		t>0, \ x \in \bR, \\
		u(0,x) = 1, 
	\end{cases}
\end{align}
where $W$ is a space-time Gaussian white noise, and $b,\sigma$ are globally Lipschitz.

\medskip

We recall that $W=\{W(\varphi);\varphi \in \cH\}$ is a zero-mean Gaussian process indexed by the Hilbert space $\cH=L^2(\bR_{+}\times \bR)$, defined on a complete probability space $(\Omega,\cF,\bP)$, and with covariance
$\bE[W(\varphi)W(\psi)]=\langle \varphi,\psi \rangle_{\cH}$. 
We let $(\cF_t)_{t\geq 0}$ be the filtration induced by $W$:
\[
\cF_t=\sigma\{ W(1_{[0,s]\times A}); s \in [0,t],A \in \cB_b(\bR)\} \vee \cN,
\]
where $\cB_b(\bR)$ is the class of bounded Borel subsets of $\bR$, and $\cN$ is the class of $\bP$-negligible sets.
A process $\{X(t,x);t\geq 0,x\in \bR\}$ is {\em predictable} if it $\cP$-measurable, where
$\cP$ is the
$\sigma$-field generated by elementary processes of the form 
$X(t,x)=Y1_{(a,b]}(t) 1_{A}(x)$, where $0<a<b$, $A\in \cB_b(\bR)$ and $Y$ is $\cF_a$-measurable. 

\medskip
For any predictable process $X$ such that $\bE\int_0^T \int_{\bR} |X(t,x)|^2 dtdx<\infty$ for any $T>0$, we can define the {\em It\^o integral} of $X$ with respect to $W$. This integral is an isometry:
\[
\bE\left|\int_0^T \int_{\bR} X(t,x) W(dt,dx)\right|^2 =\bE \int_0^T \int_{\bR}|X(t,x)|^2 dtdx.
\]
Moreover, $M_t=\int_0^t \int_{\bR} X(s,x)W(ds,dx)$ is a continuous martingale with respect to $(\cF_t)_{t\geq 0}$, with predictable quadratic variation $\langle M \rangle_t=\int_0^t \int_{\bR}|X(s,x)|^2 dsdx$. To estimate the moments of $M_t$, we will use the {\em Burkholder-Davis-Gundy (BDG) inequality}: for any $p\geq 2$,
\begin{equation}
\label{BDG}
\|M_t\|_p^2 \leq z_p^2 \, \|\langle M\rangle_t\|_{p/2},
\end{equation}
where $z_p>0$ is a constant depending on $p$.

\begin{definition}
{\rm 
We say that $u=\{u(t,x);t\geq 0,x\in \bR\}$ is a (mild) {\em solution} of \eqref{SHE} if $u$ is predictable and satisfies the integral equation:
\begin{equation}
\label{eq}
u(t,x)=1+\int_0^t \int_{\bR}G_{t-s}(x-y)b\big(u(s,y)\big)dyds+\int_0^t \int_{\bR}G_{t-s}(x-y)\s\big(u(s,y)\big)W(ds,dy).
\end{equation}
}
\end{definition}

Following \cite{walsh86}, we know that equation \eqref{SHE} has a unique solution $u$. Moreover, it was shown in \cite{dalang99} that the process $\{u(t,x)\}_{x\in \bR}$ is strictly stationary, and 
\begin{equation}
\label{m-u-bded}
K_{T,p}:=\sup_{(t,x) \in [0,T] \times \bR}\|u(t,x)\|_p<\infty \quad \mbox{for any $p\geq 2$ and $T>0$}.
\end{equation}

We introduce the following assumptions.

\begin{assumption}
\label{assumpt1}
{\rm
$b,\sigma\in C^1(\bR)$ and $b',\sigma'$ are bounded.
}
\end{assumption}

\begin{assumption}
\label{assumpt2}
{\rm
$b,\sigma\in C^2(\bR)$ and $b',b'',\sigma',\sigma''$ are bounded.
}
\end{assumption}

We are now ready to state the main results of this article.

\begin{theorem}
\label{main1}
If Assumption \ref{assumpt1} holds, then:\\
(i) the process $\{u(t,x)\}_{x \in \bR}$ is ergodic, for any $t>0$ fixed;\\
(ii) for any $t,s>0$,
\[
\lim_{R \to \infty}\frac{1}{R}\bE\big[F_R(t) F_R(s) \big] = 2\int_{\bR}\rho_{t,s}(x) dx=:\Sigma_{t,s} \ \mbox{is finite},
\]
where $\rho_{t,s}(x-y)=\bE[u(t,x)u(s,y)]$. In particular, $\sigma_R^2(t) \sim R$. 
\end{theorem}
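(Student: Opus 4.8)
The plan is to derive first the first-order Malliavin derivative estimate
\[
\|D_{r,z}u(t,x)\|_p \les G_{t-r}(x-z),\qquad 0<r<t\le T,\ p\ge 2,
\]
the analogue of \eqref{key-intro} for equation \eqref{SHE} with a drift, and to use it as the common workhorse for both parts. Differentiating the mild formulation \eqref{eq}, the process $D_{r,z}u(t,x)$ satisfies, for $t>r$,
\[
D_{r,z}u(t,x)=G_{t-r}(x-z)\s(u(r,z))+\int_r^t\!\!\int_\bR G_{t-\theta}(x-w)\,b'(u(\theta,w))\,D_{r,z}u(\theta,w)\,dw\,d\theta+\int_r^t\!\!\int_\bR G_{t-\theta}(x-w)\,\s'(u(\theta,w))\,D_{r,z}u(\theta,w)\,W(d\theta,dw).
\]
Taking $L^p(\Omega)$-norms and using $\|\s(u(r,z))\|_p\les1$ (from \eqref{m-u-bded} and the Lipschitz property of $\s$), the boundedness of $b'$ and $\s'$ granted by Assumption \ref{assumpt1}, the BDG inequality \eqref{BDG} for the stochastic term, and the heat-kernel identities $\int_\bR G_a(x-w)G_b(w-z)\,dw=G_{a+b}(x-z)$ and $\int_\bR G_a(x-w)^2G_b(w-z)^2\,dw\les(a\wedge b)^{-1/2}G_{a+b}(x-z)^2$, one closes the estimate by a Gronwall-type iteration on the Picard approximations (subdividing $[r,t]$ into short subintervals when $T$ is large). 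The drift term contributes only a quantity of order $(t-r)\,G_{t-r}(x-z)$, which is subdominant to the stochastic term and absorbed without difficulty; the same iteration also gives $u(t,x)\in\bD^{1,2}$ for every $(t,x)$ under Assumption \ref{assumpt1}. I expect this first-order estimate (absorbing the extra drift term into the iteration) to be the main technical point of Theorem \ref{main1}: the deeper novelties of the paper, namely the second-order derivative bound and the heat-kernel product estimate, only enter the proof of the QCLT, not this theorem.

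For part (i): since $u(t,x)\in\bD^{1,2}$ and the field $\{u(t,x)\}_{x\in\bR}$ is stationary, spatial ergodicity for fixed $t>0$ follows from the Malliavin-calculus criterion of \cite{CKNP21}, which reduces the claim to verifying
\[
\lim_{|x|\to\infty}\int_0^t\!\!\int_\bR\|D_{r,z}u(t,0)\|_2\,\|D_{r,z}u(t,x)\|_2\,dz\,dr=0.
\]
By the first-order estimate and the semigroup identity this integral is $\les\int_0^t G_{2(t-r)}(x)\,dr$; since $G_{2(t-r)}(x)\le(4\pi(t-r))^{-1/2}$, which is integrable in $r$ on $[0,t]$ and independent of $x$, while $G_{2(t-r)}(x)\to0$ as $|x|\to\infty$ for each $r<t$, dominated convergence yields the limit. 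The drift has no effect on part (i) beyond the (routine) need to know $u(t,x)\in\bD^{1,2}$.

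For part (ii): write $\rho_{t,s}(x-y)={\rm Cov}(u(t,x),u(s,y))$, which depends only on $x-y$ because $\{(u(t,x),u(s,x))\}_{x\in\bR}$ is jointly stationary; a change of variables gives
\[
\bE[F_R(t)F_R(s)]=\int_{-R}^R\!\!\int_{-R}^R\rho_{t,s}(x-y)\,dx\,dy=\int_{-2R}^{2R}\rho_{t,s}(w)\,(2R-|w|)\,dw,
\]
so the key point is $\rho_{t,s}\in L^1(\bR)$. Since $u(t,x)\in\bD^{1,2}$ is $\cF_t$-measurable, the Clark--Ocone formula yields $u(t,x)=\bE[u(t,x)]+\int_0^t\int_\bR\bE[D_{r,z}u(t,x)\mid\cF_r]\,W(dr,dz)$, and similarly for $u(s,0)$; combining these via the It\^o isometry, the fact that conditional expectation is an $L^2(\Omega)$-contraction, and then the first-order estimate with the semigroup identity, we get
\[
|\rho_{t,s}(x)|\le\int_0^{t\wedge s}\!\!\int_\bR\|D_{r,z}u(t,x)\|_2\,\|D_{r,z}u(s,0)\|_2\,dz\,dr\les\int_0^{t\wedge s}G_{(t-r)+(s-r)}(x)\,dr,
\]
whence $\|\rho_{t,s}\|_{L^1(\bR)}\les t\wedge s<\infty$ and $\Sigma_{t,s}$ is finite. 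Consequently
\[
\frac1R\bE[F_R(t)F_R(s)]=\int_\bR\rho_{t,s}(w)\Big(2-\tfrac{|w|}{R}\Big)1_{\{|w|\le 2R\}}\,dw\ \longrightarrow\ 2\int_\bR\rho_{t,s}(w)\,dw=\Sigma_{t,s}
\]
by dominated convergence with dominating function $2|\rho_{t,s}|\in L^1(\bR)$; in particular $\sigma_R^2(t)/R\to\Sigma_{t,t}$, and $\Sigma_{t,t}\ge0$ since $\sigma_R^2(t)\ge0$. To conclude $\sigma_R^2(t)\sim R$ it remains to show $\Sigma_{t,t}>0$, which is the second obstacle: as in \cite{HNV20}, one bounds $\sigma_R^2(t)=\bE[F_R(t)^2]$ from below by the variance of the projection of $F_R(t)$ onto the first Wiener chaos, namely $\int_0^t\int_\bR\big(\int_{|x|\le R}\bE[D_{r,z}u(t,x)]\,dx\big)^2dz\,dr$, which for $|z|\le R/2$ is, up to an error vanishing uniformly in $r\in[0,t]$ as $R\to\infty$, given by $\psi(r)^2$ with $\psi(r):=\int_\bR\bE[D_{r,z}u(t,x)]\,dx$ independent of $z$, so the variance is $\gtrsim R\int_0^t\psi(r)^2\,dr$; here the drift requires a minor modification of the argument of \cite{HNV20}, since $\psi(r)=\bE[\s(u(r,z))]+\int_r^t\int_\bR\bE[b'(u(\theta,w))D_{r,z}u(\theta,w)]\,dw\,d\theta$ now carries an extra term (controlled via the first-order estimate), and $\int_0^t\psi(r)^2\,dr>0$ under the natural non-degeneracy hypothesis $\s(1)\ne0$.
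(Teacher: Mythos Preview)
Your proposal is essentially correct and closely parallels the paper's proof. The first-order estimate is Theorem~\ref{thm:1st-deriv-prelim} in the paper (proved via Picard iteration and a dedicated Gronwall-type lemma, Lemma~\ref{lemA2-HNV}), and part~(ii) is argued exactly as you describe, via \eqref{Poin-cov} and dominated convergence. For part~(i) the paper uses a slightly different formulation of the ergodicity criterion: rather than the single covariance-type integral you state (be careful here, since covariance decay of $u$ alone is not sufficient for ergodicity of a non-Gaussian field), the paper invokes Lemma~\ref{erg-lem} (a variant of the \cite{CKNP21} criterion, taken from \cite{BZ24}), which requires $R^{-2}{\rm Var}\big(\int_0^R g(\sum_j b_j u(t,x+\zeta_j))\,dx\big)\to 0$ for $g\in\{\cos,\sin\}$ and all finite linear combinations; this is checked via the Poincar\'e inequality and the chain rule in Lemma~\ref{Eulalia}, and the verification reduces to the same key estimate you derive. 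Finally, you go beyond the paper in sketching the lower bound $\Sigma_{t,t}>0$ via the first-chaos projection: the paper asserts ``$\sigma_R^2(t)\sim R$'' without proving strict positivity and without stating the non-degeneracy hypothesis $\s(1)\ne 0$ that your argument (correctly adapted from \cite{HNV20}) needs.
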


\begin{theorem}
\label{main-th}
If Assumption \ref{assumpt2} holds, then:\\
(i) (QCLT)
for any $t>0$ and $R>0$,
\begin{equation}
\label{QCLT}
d_{TV}\left(\frac{F_R(t)}{\s_R(t)},Z \right) \leq C_t R^{-1/2},
\end{equation}
where $Z \sim N(0,1)$ and $C_t>0$ is a constant depending on $t$;\\
(ii) (FCLT) 
for any $R>0$, the process $F_R(\cdot)=\{F_R(t)\}_{t\geq 0}$ has a {$\gamma$-H\"older continuous modification of order $\gamma\in (0,\frac{1}{2})$}, which we denote also $F_R(\cdot)$, and for which
\[
\frac{1}{\sqrt{R}}F_R(\cdot) \stackrel{d}{\to} \cG(\cdot) \quad \mbox{in $C([0,\infty))$},
\]
where $\cG(\cdot)=\{\cG(t)\}_{t\geq 0}$ is a zero-mean Gaussian process with
$\bE[\cG(t) \cG(s)]=\Sigma_{t,s}$. Here $C([0,\infty))$ is the space of continuous functions on $[0,\infty)$, equipped with the topology of uniform convergence on compact sets.
\end{theorem}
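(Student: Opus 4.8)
The plan is to carry out, in the present drift setting, the Malliavin--Stein strategy via the second-order Poincar\'e inequality recalled in the introduction. Starting from the Stein--Malliavin bound \eqref{SM} applied with $v=-DL^{-1}F_R(t)$ --- for which $\bE[\langle DF_R(t),v\rangle_{\cH}]=\sigma_R^2(t)$ as in \eqref{var}, since $\delta(-DL^{-1}F_R(t))=F_R(t)$ --- and using Theorem \ref{main1}(ii), which gives $\sigma_R^2(t)\sim R$, part (i) reduces to establishing
\[
\mathrm{Var}\big(\langle DF_R(t),-DL^{-1}F_R(t)\rangle_{\cH}\big)\les R .
\]
For this I would invoke Vidotto's second-order Gaussian Poincar\'e inequality \cite{vidotto20}, which controls the left-hand side by an explicit functional of the fourth moments of the second Malliavin derivative $D^2_{(\theta,w),(r,z)}F_R(t)=\int_{|x|\le R}D^2_{(\theta,w),(r,z)}u(t,x)\,dx$ --- schematically by integrals over auxiliary variables of products $\|D^2_{\zeta_1,\zeta_3}F_R(t)\|_4\,\|D^2_{\zeta_2,\zeta_3}F_R(t)\|_4$. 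Thus everything rests on two moment estimates for the solution of \eqref{eq}: the first-order bound $\|D_{r,z}u(t,x)\|_p\les G_{t-r}(x-z)$ (the analogue of \eqref{key-intro}), and a second-order bound of the type \eqref{key-D2-intro}.

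For the first-order estimate I would differentiate \eqref{eq}: for $t>r$ the derivative $D_{r,z}u(t,x)$ solves a linear SPDE which, compared with the driftless case, carries the additional deterministic term $\int_r^t\!\int_{\bR}G_{t-s}(x-y)\,b'(u(s,y))\,D_{r,z}u(s,y)\,dy\,ds$; this is harmless because $\int_r^t\!\int_{\bR}G_{t-s}(x-y)G_{s-r}(y-z)\,dy\,ds=(t-r)\,G_{t-r}(x-z)\le T\,G_{t-r}(x-z)$, so a Picard iteration using the It\^o isometry, \eqref{BDG} and Assumption \ref{assumpt1} yields $\|D_{r,z}u(t,x)\|_p\les G_{t-r}(x-z)$ on $[0,T]$. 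For the second-order estimate I would differentiate once more, in a direction $(\theta,w)$; by symmetry of $D^2$ we may assume $0<\theta<r<t\le T$, and then the boundary term produced by the Malliavin derivative of the stochastic integral \emph{vanishes}, since $D_{r,z}u(\theta,w)=0$ for $\theta<r$. What remains is a linear SPDE for $h(t,x):=D^2_{(\theta,w),(r,z)}u(t,x)$ with ``initial datum'' $G_{t-r}(x-z)\,\sigma'(u(r,z))\,D_{\theta,w}u(r,z)$ at time $r$ --- whose $L^p(\Omega)$ norm is $\les G_{t-r}(x-z)\,G_{r-\theta}(z-w)$ by the first-order bound --- the recursive terms $b'(u)\,h$ and $\sigma'(u)\,h$, and, crucially because $b$ and $\sigma$ are now genuinely nonlinear, an inhomogeneous forcing involving the \emph{product} $D_{\theta,w}u(s,y)\,D_{r,z}u(s,y)$ coming from $b''$ and $\sigma''$ (this is where Assumption \ref{assumpt2} enters). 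Estimating $\|h(t,x)\|_p$ via \eqref{BDG}, Minkowski's integral inequality and H\"older's inequality to split this product, and inserting the first-order bound, reduces matters to controlling space-time integrals of the type $\int_r^t\!\int_{\bR}G_{t-s}(x-y)\,G_{s-\theta}(y-w)\,G_{s-r}(y-z)\,dy\,ds$ and its squared analogue. I expect this to be the \emph{main obstacle}: a crude bound --- e.g.\ replacing $G_{s-\theta}(y-w)$ by its supremum --- destroys the decay in $(w,\theta)$ and makes the subsequent integration in Vidotto's inequality diverge, so one genuinely needs an estimate retaining a $G_{r-\theta}(z-w)$-type factor. This is precisely the announced sharp estimate for the product of two heat kernels: using the Gaussian identity $G_a(y-w)G_b(y-z)=G_{a+b}(w-z)\,G_{ab/(a+b)}\big(y-\tfrac{bw+az}{a+b}\big)$ to collapse two of the three kernels and then carefully estimating the resulting one-dimensional time integral, one arrives at $\|D^2_{(\theta,w),(r,z)}u(t,x)\|_p\les G_{t-r}(x-z)\,G_{r-\theta}(z-w)$, possibly up to a locally integrable prefactor in the time variables --- the drift analogue of \eqref{key-D2-intro}.

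Feeding these two estimates into Vidotto's inequality completes part (i): one has $\|D^2_{(\theta,w),(r,z)}F_R(t)\|_4\les \psi_{R}(z)\,G_{r-\theta}(z-w)$, with $\psi_{R}(z):=\int_{|x|\le R}G_{t-r}(x-z)\,dx$ comparable to $\mathbf{1}_{\{|z|\le R\}}$; carrying out the auxiliary integrations, the $w$-integrations collapse pairs of heat kernels by the semigroup property, the $z$-integrations against $\psi_R$ then produce an overall factor $O(R)$, and the time integrations over $\{0<\theta<r_1,r_2<t\}$ converge. Hence $\mathrm{Var}(\langle DF_R(t),-DL^{-1}F_R(t)\rangle_{\cH})\les R$, and \eqref{QCLT} follows.

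For the FCLT in part (ii) I would argue classically. \emph{Finite-dimensional convergence}: by the Cram\'er--Wold device it suffices to treat $\sum_i a_i F_R(t_i)$, which is again of the form $\delta(-DL^{-1}(\cdot))$ and whose variance converges to $R\sum_{i,j}a_i a_j\Sigma_{t_i,t_j}$ by Theorem \ref{main1}(ii); the one-dimensional argument above applies verbatim, with $G_{t-r}(x-z)$ replaced throughout by $\sum_i|a_i|\,G_{t_i-r}(x-z)$, so $R^{-1/2}\sum_i a_i F_R(t_i)\to N\big(0,\sum_{i,j}a_i a_j\Sigma_{t_i,t_j}\big)$, which identifies the limit with the Gaussian process $\cG$. \emph{Tightness in $C([0,\infty))$}: by the Kolmogorov--Chentsov criterion it is enough to prove, for all $p\ge 2$ and $0\le s\le t\le T$,
\[
\bE|F_R(t)-F_R(s)|^p\les R^{p/2}\,|t-s|^{p/2}.
\]
Writing $F_R(t)-F_R(s)$ via the Clark--Ocone formula as a single It\^o integral with integrand $\bE\big[\int_{|x|\le R}D_{\beta,\xi}\big(u(t,x)-u(s,x)\big)\,dx\,\big|\,\cF_\beta\big]$ --- which in particular turns the non-martingale drift contribution into a stochastic integral --- then applying \eqref{BDG} and combining the first-order derivative estimate with the standard $L^p(\Omega)$ temporal regularity of $u$, a computation of the same flavour as the variance estimate yields this bound; the dominant contribution, of order $R^{p/2}|t-s|^{p/2}$, comes from the increment $\int_{|x|\le R}\int_s^t\int_{\bR}G_{t-a}(x-y)\sigma(u(a,y))\,W(da,dy)\,dx$, whose quadratic variation has $L^{p/2}$ norm $\les\int_s^t\|G_{t-a}\ast\mathbf{1}_{\{|\cdot|\le R\}}\|_{L^2(\bR)}^2\,da\les R|t-s|$, while the $G_{t-a}-G_{s-a}$ and drift pieces are of smaller order. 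This gives a modification of $F_R(\cdot)$ that is $\gamma$-H\"older of every order $\gamma\in(0,1/2)$ with moments bounded uniformly in $R$, hence tightness; together with the finite-dimensional convergence this proves the stated functional limit.
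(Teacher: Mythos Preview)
Your overall architecture is right and matches the paper: Stein--Malliavin via $v=-DL^{-1}F_R(t)$, Vidotto's second-order Poincar\'e inequality, first- and second-order moment bounds on $Du$ and $D^2u$, then Cram\'er--Wold plus Kolmogorov--Chentsov for the functional statement. The first-order estimate $\|D_{r,z}u(t,x)\|_p\les G_{t-r}(x-z)$ goes through exactly as you describe.

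The genuine gap is in the second-order estimate. You propose to use the exact Gaussian product identity $G_a(y-w)G_b(y-z)=G_{a+b}(w-z)\,G_{ab/(a+b)}(y-\cdots)$ to collapse the inhomogeneous forcing $\int_r^t\!\int_{\bR}G_{t-s}(x-y)G_{s-r}(y-z)G_{s-\theta}(y-w)\,dy\,ds$ (and its squared analogue) and thereby recover the tight bound \eqref{key-D2-intro}. The paper explicitly tries this route and abandons it: the identity is recorded as \eqref{GG-id}, but the authors note that the resulting expression is ``very difficult to integrate in the $s$ variable'' and, more to the point, they state that in the genuinely nonlinear case ($b''\not\equiv0$ or $\sigma''\not\equiv0$) they \emph{do not believe} \eqref{key-D2-intro} holds. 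The obstruction is that after applying the identity the barycentre $m(s)$ and the effective variance $\alpha(s)=\tfrac{(s-r)(s-\theta)}{2s-r-\theta}$ both depend on $s$ in a way that does not propagate cleanly through the Gronwall/Picard iteration needed to close the bound on $\|D^2u\|_p$. Your hedge ``possibly up to a locally integrable prefactor'' is not enough to cover the difference.

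What the paper does instead is prove a new, cruder-but-sufficient product bound (Lemma \ref{lem:kernel-product}):
\[
G_{t-r}(x-z)G_{t-\theta}(x-w)\les \big(1+(r-\theta)^{-1/2}\big)\,G_{8T}(z-w)\,\big(G_{t-r}(x-z)+G_{t-\theta}(x-w)\big),
\]
obtained by a direct case split on $|z-w|\le1$ versus $|z-w|>1$. The right-hand side is a solution of the heat equation in $(t,x)$, so it is self-reproducing under the Gronwall-type recursion (Lemma \ref{lemA2-HNV}); this yields \eqref{key2-intro} rather than \eqref{key-D2-intro}. Note the structural differences from what you aimed for: the $(z-w)$-decay is only $G_{8T}$ (not $G_{r-\theta}$), there is an extra $G_{t-\theta}(x-w)$ term, and a $(1+|r-\theta|^{-1/2})$ singularity appears. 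These are exactly enough for Vidotto's inequality: after integrating $\int_{-R}^R dx$ the $(x-\cdot)$-kernels disappear, leaving $\|D^2_{(\theta,w),(r,z)}F_R(t)\|_4\les(1+|r-\theta|^{-1/2})G_{8T}(z-w)$ \emph{uniformly in $R$}; the $w$-integration of $G_{8T}(z-w)G_{8T}(y-w)$ collapses to $G_{16T}(z-y)$, and the locally integrable time singularity is handled by elementary Beta-function identities. The sole factor of $R$ then comes from the first-order pieces, giving $\cA_R\les R$.

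For tightness in (ii), your Clark--Ocone route is plausible but not what the paper does; the paper splits $F_R(t)=A_R(t)+B_R(t)$ into the drift and stochastic-integral contributions and treats them separately: $A_R$ is differentiable in $t$ (the spatial integral of $\partial_t v$ is estimated directly via Poincar\'e and the first-order bound), while the increment of $B_R$ is controlled by the Fourier-side argument from \cite{HNV20}. Your approach would additionally require a bound on $\|D_{\beta,\xi}(u(t,x)-u(s,x))\|_2$ for $\beta<s$, i.e.\ temporal regularity of $Du$, which is not established in the paper.
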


The proof of Theorem \ref{main-th} uses the steps {\bf (i)}-{\bf (iii)} outlined above, based on the second-order Poincar\'e inequality of Vidotto \cite{vidotto20}, which we include in Proposition \ref{prop18-1} below. We will show that the key estimate \eqref{key-intro} holds, 
and in addition, 
\begin{equation}
\label{key2-intro}
\left\| D^2_{(r,z),(\theta,w)}u(t,x)\right\|_p \les \left( 1+ |r-\theta|^{-\frac{1}{2}}\right)G_{8T}(z-w) \Big(G_{t-r}(x-z) + G_{t-\theta}(x-w)\Big).
\end{equation}

To prove inequality \eqref{key2-intro}, we use a novel technique for bounding the product of two heat kernels, which can be of independent interest; see Lemma \ref{lem:kernel-product} below. In the linear setting where $\sigma''(u)\equiv 0$ and $b''(u)\equiv 0$, the tighter bound \eqref{key-D2-intro} is valid. We do not believe that \eqref{key-D2-intro} is valid in the nonlinear setting. {The equation satisfied by the second Malliavin derivative of the solution $u$} contains the Lebesgue integral
\[\int_0^t \int_{\mathbb{R}} G_{t-s}(x-y)b''(u(s,y))D_{r,z}u(s,y)D_{\theta,w}u(s,y)dyds\]
and the stochastic integral
\[\int_0^t \int_{\mathbb{R}} G_{t-s}(x-y)\sigma''(u(s,y))D_{r,z}u(s,y)D_{\theta,w}u(s,y)W(dyds).\]
These {integrals} are difficult to bound in the nonlinear setting. 
Using \eqref{key-intro}, Minkowski's inequality, and BDG inequality for the stochastic integral, we can bound the moments of these integrals by multiples of
\[\int_0^t \int_{\mathbb{R}} G_{t-s}(x-y) G_{s-r}(y-z)G_{s-\theta}(y-w)dyds\]
and
\[\int_0^t \int_{\mathbb{R}} G^2_{t-s}(x-y) G^2_{s-r}(y-z)G^2_{s-\theta}(y-w)dyds.\]
{The right-hand side of \eqref{key2-intro} is larger than product $G_{t-r}(x-z)G_{t-\theta}(x-w)$ (see Lemma \ref{lem:kernel-product} below), and is a solution to the heat equation in the $(t,x)$ variables. Additionally, this expression decays quickly in $z$ and $w$, which is useful for the proof of QCLT.}

We denote
\begin{equation}
\label{def-Lbs}
L_b := \|b'\|_{\infty} \quad \mbox{and} \quad L_\sigma:= \|\sigma'\|_{\infty}.
\end{equation}
and $\|f\|_{\infty}=\sup_{x\in \bR}|f(x)|$ for any function $f:\bR \to \bR$.
Then, for any $x,y \in \bR$.
\begin{equation}
\label{Lip}
|b(x)-b(y)|\leq L_{b}|x-y| \quad \mbox{and} \quad
|\s(x)-\s(y)|\leq L_{\s}|x-y|.
\end{equation}

The constants appearing in \eqref{key-intro} and \eqref{key2-intro} depend only on $(T,p,L_{b},L_{\sigma},\|b''\|_{\infty},\|\s''\|_{\infty})$.

\medskip

This article is organized as follows. In Section \ref{section-prelim}, we present some basic facts from Malliavin calculus, and simple properties of the heat kernel.
In Section \ref{section-Mal}, we prove the key estimates \eqref{key-intro} and \eqref{key2-intro} for the first and second Malliavin derivatives of the solution.
In Section \ref{section-first-order}, we give the proof of Theorem \ref{main1}.
The proof of Theorem \ref{main-th} is presented in
Section \ref{section-QCLT} (part (i)), and Section \ref{section-FCLT} (part (ii)). The appendix contains some auxiliary results.

\section{Preliminaries}
\label{section-prelim}

In this section, we introduce some preliminary material which is used in this article.

\subsection{Malliavin Calculus}
\label{section-Malliavin}

In this section, we present some basic facts from Malliavin calculus with respect to the isonormal Gaussian process $W=\{W(\varphi)\}_{\varphi \in \cH}$. We refer the reader to \cite{nualart06} for more details.

\medskip

$\bullet$ {\bf Chaos expansion.}
Every random variable $F\in L^2(\Omega)$
which is measurable with respect to $W$ has the Wiener chaos expansion:
\begin{equation}
\label{F-chaos}
F=\bE(F)+\sum_{n \geq 1}I_n(f_n) \quad \mbox{for some} \quad f_n \in
\cH^{\otimes n},
\end{equation}
where $\cH^{\otimes n}=L^2\big((\bR_{+} \times \bR)^n\big)$, $I_n:\cH^{\otimes n} \to \cH_n$
is the multiple Wiener integral with respect to $W$, and $\cH_n$ is the $n$-th chaos space.
We denote by $J_n$ the projection on the $n$-th Wiener chaos.
By the orthogonality of the Wiener chaos spaces, if $F$ has the chaos expansion \eqref{F-chaos}, then
$$E|F|^2=[\bE(F)]^2+ \sum_{n \geq 1}E|I_n(f_n)|^2=[\bE(F)]^2+\sum_{n \geq 1}n! \,
\|\widetilde{f}_n\|_{\cH^{\otimes n}}^{2},$$
where $\widetilde{f}_n$ is the symmetrization of $f_n$. 

\medskip

$\bullet$ {\bf Malliavin derivative.}
Let $\cS$ be the class of ``smooth'' random variables:
\begin{equation}
\label{form-F}F=f(W(\varphi_1),\ldots, W(\varphi_n)),
\end{equation} where $f \in C_{b}^{\infty}(\bR^n)$, $\varphi_i \in \cH$,
$n \geq 1$, and
$C_b^{\infty}(\bR^n)$ is the class of bounded $C^{\infty}$-functions
on $\bR^n$, whose partial derivatives of all orders are bounded. The
{\em Malliavin derivative} of $F$ of the form (\ref{form-F}) is the
$\cH$-valued random variable given by:
$$D_{t,x}F:=\sum_{i=1}^{n}\frac{\partial f}{\partial x_i}(W(\varphi_1),\ldots,
W(\varphi_n))\varphi_i(t,x), \quad \mbox{for $t>0$ and $x\in \bR$}.$$ We endow $\cS$ with the semi-norm
$\|F\|_{\bD^{1,2}}=(\bE|F|^2+(E\|D F \|_{\cH}^{2})^{1/2}$. The
operator $D$ can be extended to the space $\bD^{1,2}$, the
completion of $\cS$ with respect to $\|\cdot \|_{\bD^{1,2}}$.

\medskip

Proposition 1.2.3 (on page 28) of \cite{nualart06} states that the Malliavin derivative satisfies the following chain rule:
for any $F \in \bD^{1,2}$ and $\varphi \in C^1(\bR)$ with $\varphi'$ bounded,
\begin{equation}
\label{chain}
D\varphi(F)=\varphi'(F)DF.
\end{equation}

Similarly to Corollary 1.2.1 (on page 34) of \cite{nualart06}, if $F\in \bD^{1,2}$ is $\cF_t$-measurable, then 
\begin{equation}
\label{Mal-zero}
D_{r,z}F=0 \quad \mbox{for all $r>t$ and $z \in \bR$}.
\end{equation}

We recall the {\em Poincar\'e inequality}: for any $F \in \bD^{1,2}$
\begin{equation}
\label{Poincare}
{\rm Var}(F) \leq \bE\| DF\|_{\cH}^2=\int_0^{\infty}\int_{\bR}\bE|D_{t,x}F|^2 dtdx,
\end{equation}
and the {\em Clark-Ocone formula}: for any $F,G \in \bD^{1,2}$, 
\begin{equation}
\label{CK}
F=\bE(F)+\int_0^{\infty}\int_{\bR} \bE[D_{t,x} F|\cF_t] W(dt,dx).
\end{equation}

Using the Clark-Ocone formula, 
we obtain that 
for any $F,G \in \bD^{1,2}$,
\begin{equation}
\label{Poin-cov}
\left|{\rm Cov}(F,G) \right| \leq \int_0^{\infty} \int_{\bR}\|D_{t,x}F\|_{2} \|D_{t,x}G\|_{2} dtdx.
\end{equation}

We can define the iteration of the operator $D$ such that for any $F \in \cS$, the iterated derivative $D^k F$ is an element in $L^2(\Omega;\cH^{\otimes k})$. For any $p\geq 1$ and natural number $k\geq 1$, we introduce the semi-norm on $\cS$:
\[
\|F\|_{\bD^{k,p}}=\left( \bE|F|^p+ \sum_{j=1}^k \bE\|D^j F\|_{\cH^{\otimes j}}^p \right)^{1/p}.
\]
The operator $D^k$ is extended to the space $\bD^{k,p}$, the
completion of $\cS$ with respect to $\|\cdot \|_{\bD^{k,p}}$.

Let $F \in L^2(\Omega)$ be arbitrary. For any $n \geq k$, $J_nF \in \bD^{k,2}$, $D^k (J_n F)=J_{n-k}(D^{k}F)$ and 
\begin{equation}
\label{Dk-Jn}
\bE\|D^k (J_n F)\|_{\cH^{\otimes k}}^2=n(n-1)\ldots (n-k+1) \bE|J_n F|^2.
\end{equation}
Moreover, $F \in \bD^{k,2}$ if and only if
\begin{equation}
\label{crit}
S:=\sum_{n\geq k}n(n-1)\ldots(n-k+1)\bE|J_n F|^2 =\sum_{n\geq 1}\bE\|D^k (J_n F)\|_{\cH^{\otimes k}}^2<\infty,
\end{equation}
and in this case $\bE\|D^k F\|_{\cH^{\otimes k}}^2=S$.
\medskip

$\bullet$ {\bf Skorohod integral.}
The {\em divergence operator} $\delta$ is the adjoint of
the operator $D$. The domain of $\delta$, denoted by $\mbox{Dom} 
(\delta)$, is the set of $u \in L^2(\Omega;\cH)$ such that
$$|\bE \langle DF,u \rangle_{\cH}| \leq c (\bE|F|^2)^{1/2}, \quad \forall F \in
\bD^{1,2},$$
where $c$ is a constant depending on $u$. If $u \in {\rm Dom} (\delta)$, then $\delta(u)$ is the element of $L^2(\Omega)$
characterized by the following duality relation:
\begin{equation}
\label{duality}
\bE(F \delta(u))=\bE\left(\langle DF,u \rangle_{\cH}\right), \quad
\forall F \in \bD^{1,2}.
 \end{equation}
In particular, $\bE[\delta(u)]=0$. If $u \in \mbox{Dom} (\delta)$, 
we use the notation
$\delta(u)=\int u(t,x) W(\delta t, \delta x)$, and
we say that $\delta(u)$ is the {\em Skorohod integral} of $u$. If $u$ is jointly measurable and $(\cF_t)_{t\geq 0}$-adapted (in particular predictable), then $\delta(u)$ coincides with the It\^o integral of $u$.

\medskip

Let $\bL^{1,2}$ be the class of processes $u \in L^2(\Omega;\cH)$ such that $u(t,x)\in \bD^{1,2}$ for all $(t,x)\in \bR_{+}\times \bR$, and there exists a measurable modification of
$\{D_{t,x}u(s,y);(t,x),(s,y)\}$ such that
\[
\bE \int_{(\bR_{+}\times \bR)^2} |D_{t,x}u(s,y)|^2 dtdxdsdy<\infty.
\]

Proposition 1.3.8 of \cite{nualart06} states that if $u \in \bL^{1,2}$, $\{D_{t,x}u(s,y)\}_{(s,y)} \in {\rm Dom}(\delta)$ for almost all $(t,x)$, and there exists a measurable modification of $\{\int D_{t,x}u(s,y)W(\delta s,\delta y)\}_{(t,x)}$ which is in $L^2(\Omega;\cH)$, then $\delta(u) \in \bD^{1,2}$ and the following {\em Heisenberg commutation} holds:
\begin{equation}
\label{Heisenberg}
D(\delta(u))=u+\delta(Du).
\end{equation}

\medskip

$\bullet$ {\bf Ornstein-Uhlenbeck generator.}
If $F\in L^2(\Omega)$ has the chaos expansion \eqref{F-chaos}, we define the {\em Ornstein-Uhlenbeck (OU) generator}
\begin{equation}
\label{def-L}
LF=-\sum_{n\geq 1}n I_n(f_n),
\end{equation}
whose domain is the set ${\rm Dom}(L)$ of all random variables $F \in L^2(\Omega)$ for which the series converges in $L^2(\Omega)$.
If $F\in L^2(\Omega)$ has the chaos expansion \eqref{F-chaos}, we let:
\[
L^{-1}F=-\sum_{n\geq 1}\frac{1}{n} I_n(f_n).
\]
 $L^{-1}$  is the pseudo-inverse of $L$, in the sense that
$L L^{-1} F=F-\bE(F)$ for any $F \in L^2(\Omega)$.

\medskip

By Proposition 1.4.3 of \cite{nualart06},
$F \in {\rm Dom}\ L$ if and only if $F \in \bD^{1,2}$ and $DF \in {\rm Dom} \ \delta$, and in this case, $LF=-\delta (D F)$. Applying this result to $F'=L^{-1} F \in {\rm Dom}(L)$, we conclude that for any $F \in L^2(\Omega)$ with $\bE(F)=0$, the process $v=-D L^{-1}F$ belongs to ${\rm Dom} \ \delta$
and
\begin{equation}
\label{Mal-cor}
F=\delta(-D L^{-1} F).
\end{equation}

\subsection{Properties of the heat kernel}

In this section, we include some properties of the heat kernel which are used in the sequel.

\medskip

$\bullet$ {\bf Product of squares.}
For any $t>0$ and $x \in \bR$,
\begin{equation}
\label{G2-G}
G_t^2(x)=\frac{1}{\sqrt{4\pi}} t^{-1/2}G_{t/2}(x).
\end{equation}

By \eqref{G2-G} and the semigroup property, for any $s \in [r,t]$
\begin{align}
\label{G2-G2}
\int_{\bR} G^2_{t-s}(x-y) G_{s-r}^2(y-z) dy &
= \frac{1}{\sqrt{4\pi}}  (t-s)^{-\frac{1}{2}} (s-r)^{-\frac{1}{2}} (t-r)^{\frac{1}{2}} G_{t-r}^2(x-z).
\end{align}
Therefore,
\begin{equation}
\label{G2-G2-2}
\int_r^t \int_{\bR} G^2_{t-s}(x-y) G_{s-r}^2(y-z) dy ds=\sqrt{\frac{\pi}{4}}(t-r)^{\frac{1}{2}} G_{t-r}^2(x-z),
\end{equation}
using the fact that 
\begin{equation}
\label{beta}
\int_r^t (t-s)^{-\frac{1}{2}}(s-r)^{-\frac{1}{2}}ds={\rm Beta}\left(\frac12,\frac12\right)=\pi.
\end{equation}

\medskip
$\bullet$ {\bf Monotonicty.} For any $0 <s \leq t$ and $x \in \mathbb{R}$,
\begin{equation} 
\label{eq:gaussian-bound}
G_s(x) \leq s^{-\frac{1}{2}}  t^{\frac{1}{2}} G_t(x).
\end{equation}


\section{Moment bounds for the Malliavin derivatives}
\label{section-Mal}

In this section, we prove the key estimates for the first and second Malliavin derivatives of the solution $u$, which are will be used for the proofs of Theorems \ref{main1} and \ref{main-th}. 
First, we will obtain these estimates for the Malliavin derivatives of an approximation $u_n$ of the solution. 
Then, by applying Lemma \ref{lem:weak-implies-pointwise} below, we will conclude that the same estimates hold for $Du$ and $D^2u$.

As approximating sequence, we consider
the Picard iterations defined by: $u_0(t,x)=1$ and
\begin{equation}
\label{def-Picard}
u_{n+1}(t,x)=1+\int_0^t \int_{\bR}G_{t-s}(x-y)b(u_n(s,y))dyds+\int_0^t \int_{\bR}G_{t-s}(x-y)\s(u_n(s,y))W(ds,dy).
\end{equation}

{
From Theorem 13 of \cite{dalang99}, it is known that for any $p\geq 2$ and $T>0$,
\begin{equation}
\label{un-conv-u}
\sup_{t \in [0,T]}\sup_{x\in \bR}\|u_n(t,x)-u(t,x)\|_p \to 0 \quad \mbox{as $n \to \infty$}.
\end{equation}
Moreover, by Lemma 7.3 of \cite{sanz05}, for any integer $k\geq 1$ and $p\geq 2$, $u_n \in \bD^{k,p}$  for any $n$, and 
\[
\sup_{n\geq 1}\sup_{(t,x)\in [0,T]\times \bR}\bE \|D^k u_n(t,x)\|_{\cH^{\otimes k}}^p<\infty, \quad \mbox{for any $p\geq 2,T>0$}.
\]
}

Note that $b$ and $\sigma$ satisfy the following linear growth property: for any $x \in \bR$,
\begin{equation}
\label{lin-grow}
|b(x)|\leq D_{b} (|x|+1) \quad \mbox{and} \quad  |\s(x)|\leq D_{\sigma} (|x|+1),
\end{equation}
where $D_{b}=\max\{L_{b},|b(0)|\}$ and $D_{\sigma}=\max\{L_{\sigma},|\sigma(0)|\}$.
It follows that
{
\begin{equation}
\label{s-u-bded}
\sup_{(t,x) \in [0,T] \times \bR}\|\s\big(u(t,x)\big)\|_p\leq D_{\sigma} (K_{T,p}+1)=:K_{T,p,D_{\sigma}},
\end{equation}
where $K_{T,p}$ is given by \eqref{m-u-bded}.
}

\medskip
 
We will use the following result (see Lemma 1.2.3 of \cite{nualart06}).
  
\begin{lemma}
\label{lem:nualart123}
Let $(F_n)_{n\geq 1}$ be a sequence in $\bD^{1,2}$ such that $F_n \to F$ in $L^2(\Omega)$, and
\[
\sup_{n\geq 1} \bE \|DF_n\|_{\cH}^2<\infty.
\]
Then $F \in \bD^{1,2}$ and the sequence $(DF_n)_{n\geq 1}$ converges to $DF$ in the weak topology of $L^2(\Omega;\cH)$.
  \end{lemma}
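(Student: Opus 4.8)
The plan is to view this as a closedness statement for the operator $D$, established through the Wiener chaos expansion. Two assertions must be proved: that $F\in\bD^{1,2}$, and that $DF_n\to DF$ in the weak topology of $L^2(\Omega;\cH)$. Write the chaos expansions $F_n=\bE(F_n)+\sum_{m\ge 1}I_m(f_{n,m})$ and $F=\bE(F)+\sum_{m\ge 1}I_m(f_m)$ with $f_{n,m},f_m\in\cH^{\otimes m}$. Since the projection $J_m$ onto the $m$-th chaos is an orthogonal projection of $L^2(\Omega)$, the hypothesis $F_n\to F$ in $L^2(\Omega)$ forces $J_mF_n\to J_mF$ in $L^2(\Omega)$ for every $m$, equivalently $\|\widetilde{f}_{n,m}-\widetilde{f}_m\|_{\cH^{\otimes m}}\to 0$.

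For the first assertion, recall from the chaos expansion of $\|DF_n\|_{\cH}^2$ that $\bE\|DF_n\|_{\cH}^2=\sum_{m\ge 1}m\cdot m!\,\|\widetilde{f}_{n,m}\|_{\cH^{\otimes m}}^2$. For each fixed $m$ we have $\|\widetilde{f}_{n,m}\|_{\cH^{\otimes m}}\to\|\widetilde{f}_m\|_{\cH^{\otimes m}}$, so Fatou's lemma gives $\sum_{m\ge 1}m\cdot m!\,\|\widetilde{f}_m\|_{\cH^{\otimes m}}^2\le\liminf_{n}\bE\|DF_n\|_{\cH}^2\le\sup_{n}\bE\|DF_n\|_{\cH}^2<\infty$. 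By the criterion \eqref{crit} with $k=1$ this shows $F\in\bD^{1,2}$, and moreover $\bE\|DF\|_{\cH}^2\le\sup_{n}\bE\|DF_n\|_{\cH}^2$.

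For the second assertion, observe that $(DF_n)_n$ is bounded in the Hilbert space $L^2(\Omega;\cH)$, so every subsequence has a further subsequence converging weakly; it is therefore enough to show that any weak subsequential limit coincides with $DF$. Use the orthogonal (chaos) decomposition of $L^2(\Omega;\cH)$: the component of $DG$ on the $(m-1)$-th chaos level equals $D(J_mG)=J_{m-1}(DG)$, and by \eqref{Dk-Jn}, $\bE\|D(J_mG)\|_{\cH}^2=m\,\bE|J_mG|^2$. On each fixed level the convergence is in fact strong, since $\|D(J_mF_n)-D(J_mF)\|_{L^2(\Omega;\cH)}^2=m\,\bE|J_mF_n-J_mF|^2\to 0$. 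Thus if $DF_{n_k}\rightharpoonup G$, then applying the weakly continuous orthogonal projection onto the $(m-1)$-th level gives $J_{m-1}G=\lim_{k}J_{m-1}(DF_{n_k})=\lim_{k}D(J_mF_{n_k})=D(J_mF)=J_{m-1}(DF)$ for every $m\ge 1$, whence $G=DF$. Consequently the whole sequence $DF_n$ converges weakly to $DF$.

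The only delicate point — the one I would regard as the crux — is the identification of the weak limit; an alternative route that avoids the chaos bookkeeping is the duality relation \eqref{duality}: for every smooth cylindrical $\cH$-valued process $u$ (such processes lie in ${\rm Dom}(\delta)$ and are dense in $L^2(\Omega;\cH)$) one has $\bE\langle DF_n,u\rangle_{\cH}=\bE[F_n\delta(u)]\to\bE[F\delta(u)]=\bE\langle DF,u\rangle_{\cH}$, where the last equality uses $F\in\bD^{1,2}$ from the first assertion; comparing with $\bE\langle DF_{n_k},u\rangle_{\cH}\to\bE\langle G,u\rangle_{\cH}$ along a weakly convergent subsequence and invoking density yields $G=DF$. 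Everything else is routine functional analysis.
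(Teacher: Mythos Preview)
Your proof is correct. The paper does not actually prove this lemma --- it is stated with a reference to Lemma~1.2.3 of \cite{nualart06} --- but your argument is essentially the one the paper gives for the analogous second-order statement (Lemma~\ref{lem2-nualart}): the membership $F\in\bD^{1,2}$ via Fatou on the chaos levels, and identification of the weak limit via Banach--Alaoglu plus duality with $\delta$. Your primary route to the second assertion, identifying the limit level-by-level through the chaos projections $J_{m-1}$ and the isometry \eqref{Dk-Jn}, is slightly more direct than the duality argument (which the paper uses for $k=2$, testing against $J_k G$ rather than smooth cylindrical $u$); it avoids any reference to ${\rm Dom}(\delta)$ at the cost of a little more bookkeeping. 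Both approaches are standard and equivalent in difficulty.
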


\begin{theorem} 
\label{thm:1st-deriv-prelim}
If Assumption \ref{assumpt1} holds, then 
for any $p\geq 2$, $0<r<t \leq T$ and $x,z \in \bR$,
\begin{equation}
\label{key}
\|D_{r,z}u(t,x)\|_p \leq \cC_{T,p}^{(1)} G_{t-r}(x-z),
\end{equation}
where {$\cC_{T,p}^{(1)}$ is a constant that depends on $(p,T,L_b,L_\sigma)$. Here $(L_b,L_{\s})$ are given by \eqref{def-Lbs}}.
\end{theorem}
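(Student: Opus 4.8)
The plan is to first establish the estimate \eqref{key} for the Picard iterations $u_n$ by induction on $n$, with a constant uniform in $n$, and then pass to the limit using Lemma \ref{lem:nualart123}. To set up the induction, I differentiate the Picard recursion \eqref{def-Picard}. By the chain rule \eqref{chain} applied to $b$ and $\sigma$ (which are $C^1$ with bounded derivatives under Assumption \ref{assumpt1}), and by the fact that $D_{r,z}$ of a stochastic integral produces a boundary term plus an integrated term (this is the commutation relation, used here in the concrete Walsh-integral form), the derivative $D_{r,z}u_{n+1}(t,x)$ satisfies, for $r<t$,
\[
D_{r,z}u_{n+1}(t,x) = G_{t-r}(x-z)\sigma(u_n(r,z)) + \int_r^t\!\!\int_{\bR} G_{t-s}(x-y) b'(u_n(s,y)) D_{r,z}u_n(s,y)\,dyds + \int_r^t\!\!\int_{\bR} G_{t-s}(x-y)\sigma'(u_n(s,y)) D_{r,z}u_n(s,y)\,W(ds,dy),
\]
and $D_{r,z}u_{n+1}(t,x)=0$ for $r>t$ by \eqref{Mal-zero}. (One should note that $D_{r,z}u_0=0$, so the base case is trivial, and the bound for $n=1$ comes purely from the first term using \eqref{s-u-bded}.)

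Next I would take $L^p(\Omega)$-norms. For the first term, $\|G_{t-r}(x-z)\sigma(u_n(r,z))\|_p \leq K_{T,p,D_\sigma}\,G_{t-r}(x-z)$ by \eqref{s-u-bded}. For the drift integral, Minkowski's integral inequality plus $|b'|\le L_b$ plus the induction hypothesis $\|D_{r,z}u_n(s,y)\|_p \le \cC G_{s-r}(y-z)$ give a bound by $L_b\,\cC \int_r^t\int_{\bR} G_{t-s}(x-y)G_{s-r}(y-z)\,dyds = L_b\,\cC\,(t-r)G_{t-r}(x-z)$ using the semigroup property. For the stochastic integral, the BDG inequality \eqref{BDG} followed by Minkowski's inequality in the quadratic-variation term, together with $|\sigma'|\le L_\sigma$ and the induction hypothesis, yields a bound by $z_p L_\sigma \cC \big(\int_r^t\int_{\bR} G^2_{t-s}(x-y)G^2_{s-r}(y-z)\,dyds\big)^{1/2}$, which by \eqref{G2-G2-2} equals $z_p L_\sigma \cC\,(\pi/4)^{1/4}(t-r)^{1/4}G_{t-r}(x-z)$. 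Collecting terms, $\|D_{r,z}u_{n+1}(t,x)\|_p \le \big(K_{T,p,D_\sigma} + \cC\,a(t-r)\big)G_{t-r}(x-z)$ where $a(\tau)=L_b\tau + z_p L_\sigma(\pi/4)^{1/4}\tau^{1/4} \to 0$ as $\tau\to 0$.

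This last point is the crux: a naive induction would let the constant grow with $n$. The standard remedy is a Gronwall-type / bootstrapping argument: either iterate the inequality and sum a convergent series (the kernel $a(t-s)$ has an integrable singularity structure so the iterated integrals decay super-exponentially in $n$, giving $\sup_n \cC_n < \infty$), or first prove the estimate on a short time interval $[0,\delta]$ where $a(\delta)<1/2$ so that $\cC_{n+1}\le K_{T,p,D_\sigma} + \cC_n/2$ gives $\sup_n\cC_n \le 2K_{T,p,D_\sigma}$, and then patch intervals together using the Markov/semigroup structure and stationarity. I expect this uniform-in-$n$ control of the constant to be the main obstacle; everything else is routine once the recursion for $D_{r,z}u_n$ is written down correctly. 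Finally, having $\sup_n\|D_{r,z}u_n(t,x)\|_p \le \cC^{(1)}_{T,p}G_{t-r}(x-z)$ for all $p\ge 2$, I use \eqref{un-conv-u} and the uniform $\bD^{1,2}$-bound quoted from \cite{sanz05} to apply Lemma \ref{lem:nualart123}: along a subsequence $D u_n(t,x) \to Du(t,x)$ weakly in $L^2(\Omega;\cH)$, and (as in Lemma \ref{lem:weak-implies-pointwise}, invoked in the section preamble) this weak convergence upgrades the uniform pointwise $L^p$ bound to the same bound for $\|D_{r,z}u(t,x)\|_p$, since the $L^p$-norm is weakly lower semicontinuous; higher moments $p>2$ survive the limit because $\sup_n\|D_{r,z}u_n(t,x)\|_{p'}<\infty$ for $p'>p$ gives uniform integrability. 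This establishes \eqref{key}.
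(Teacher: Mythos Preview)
Your proposal is correct and follows essentially the same route as the paper: derive the recursion for $D_{r,z}u_{n+1}$ via the commutation relation and chain rule, bound each term using Minkowski/BDG and the semigroup identities, and then control the constant uniformly in $n$ before passing to the limit with Lemma~\ref{lem:nualart123} and Lemma~\ref{lem:weak-implies-pointwise}. The only packaging difference is that the paper absorbs the uniform-in-$n$ step into the abstract Gronwall-type Lemma~\ref{lemA2-HNV} (whose proof is precisely your ``iterate and sum a convergent series'' option, carried out after normalizing by $\rho=G_{t-r}(x-z)$); your alternative ``short-time plus patching'' suggestion is vaguer and would need extra care since the recursion is on Picard iterates rather than on the solution itself, but the series option you give first is exactly right.
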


\begin{proof} We use a similar argument as in the proof of Theorem 1.6.(a) of \cite{CH23}, except that these authors used this argument for the solution $u$ itself, instead of $Du_n$. 

Using Heisenberg commutation rule \eqref{Heisenberg} followed by the chain rule \eqref{chain}, we infer that the sequence $(Du_n)_{n\geq 0}$ satisfies the following recurrence relation:
\begin{align}
\nonumber
D_{r,z}u_{n+1}(t,x)&=G_{t-r}(x-z)\s\big(u_n(r,z)\big)+\int_r^t \int_{\bR}G_{t-s}(x-y) b'\big(u_n(s,y)\big)D_{r,z}u_n(s,y) dyds+\\
\label{rec-Dun}
&\quad \int_r^t \int_{\bR} G_{t-s}(x-y)\s'\big(u_n(s,y)\big) D_{r,z}u_n(s,y) W(ds,dy), 
\end{align}
for any $n\geq 0$. Note that $D_{r,z}u_{0}(t,x)=0$ since $u_0(t,x)=1$.

By Minkowski's inequality and BDG inequality \eqref{BDG}, for any $0<r<t\leq T$, $x,z \in \bR$ and $n\geq 0$,
\begin{align*}
  		\|D_{r,z}u_{n+1}(t,x)\|_p^2 & \leq 3 \left\{K_{p,T,D_{\s}}^2 G_{t-r}^2(x-z)+  L_b^2 \left(\int_r^t \int_{\bR} G_{t-s}(x-y)\|D_{r,z}u_n(s,y)\|_pdyds\right)^2 \right.\\
  		&+\left. z_p^2 L_\sigma^2\int_r^t \int_{\bR} G^2_{t-s}(x-y)\|D_{r,z}u_n(s,y)\|_p^2dyds \right\},
  	\end{align*}
{where $K_{p,T,\sigma}$ is given by \eqref{s-u-bded}.}
For fixed $(r,z)\in \bR_{+}\times \bR$,
the function $f_n(t,x) = \|D_{r,z}u_n(t,x)\|_p$ satisfies the assumptions of Lemma \ref{lemA2-HNV} {on $I=(r,T]$}, with $\rho(t,x) = G_{t-r}(x-z)$, $A=K_{T,p,D_{\s}}$
and $C_t=C_{p,L_b,L_{\sigma}}$ {given by
\begin{equation}
\label{def-CpL}
C_{p,L_b,L_{\sigma}}=\max(1,L_b^2,z_p^2 L_{\sigma}^2).
\end{equation}
Here $z_p$ is the constant from the BDG inequality \eqref{BDG}.
Therefore,there exists a constant $\cC_{T,p}^{(1)}>0$ depending on $(T,p,L_b,L_{\s})$,} such that for any $n\geq 1$, $0<r < t\leq T$ and $x,z \in \bR$,
\begin{equation} 
\label{key-un}
\|D_{r,z}u_n(t,x)\|_p \leq \cC_{T,p}^{(1)} G_{t-r}(x-z).
\end{equation}

We apply Lemma \ref{lem:nualart123} to $F_n=u_n(t,x)$ and $F=u(t,x)$ for fixed $(t,x)$.
It follows that $u(t,x)\in \bD^{1,2}$ and $\{Du_n(t,x)\}_{n\geq 1}$ converges to $Du(t,x)$ in the weak topology $L^2(\Omega;\cH)$.
Then, the conclusion follows by Lemma \ref{lem:weak-implies-pointwise}  and relation \eqref{key-un}.
\end{proof}
  
Next, we will provide an estimate for the $p$-th moment of the second Malliavin derivative $D^2 u$.
For this we will need to handle products of two heat kernels $G$.
Let us first remark that from Lemma A.4 of \cite{chen-dalang15}, we know that for any $t,s>0$ and $x,y \in \mathbb{R}$, 
		\begin{equation} 
			\label{mult-Gaussian}
			G_t(x)G_s(y) = G_{t+s}(x-y)G_{\frac{ts}{t+s}}\left(\frac{sx+ty}{t+s}\right).
		\end{equation}
		As an application of \eqref{mult-Gaussian}, we obtain that for any $r<s<t$ and $x,y,z \in \bR$, we have:
		\begin{equation}
			\label{GG-id}
			G_{t-s}(x-y) G_{s-r}(y-z)=G_{t-r}(x-z) G_{\frac{(t-s)(s-r)}{t-r}}\left(y-\frac{(t-s)z
			}{t-r} -\frac{(s-r)x}{t-r}\right).
		\end{equation}
The exact expression \eqref{GG-id} is very difficult to integrate in the $s$ variable, and will not be used below.


The next technical lemma provides a different upper bound for products of Gaussian densities $G_{t-r}(x-z)G_{t-\theta}(x-w)$. 
This bound is less accurate than \eqref{GG-id}, 
but is more convenient for calculations because the it includes only heat kernels evaluated at $x-z$, $x-w$ and $w-z$.

 Importantly, the upper bound in \eqref{eq:kernel-product}, as a function of $(t,x)$, is a solution to the heat equation. This enables us to prove that a similar expression is an upper bound for the $p$-th moment of $D^2_{(r,z),(\theta,w)}u(t,x)$. Additionally, the $G_{8T}(z-w)$ factor is important for proving the QCLT because it shows that these products decay quickly if $|w-z|$ is large.

\begin{lemma} \label{lem:kernel-product}
For any $0<\theta<r<t\leq T$ and $x,w,z \in \mathbb{R}$,
\begin{align} \label{eq:kernel-product}
	&G_{t-r}(x-z)G_{t-\theta}(x-w) \nonumber\\
	&\leq 8T^{\frac{1}{2}}(1 + e^{\frac{1}{16T}})(1 + (r-\theta)^{-\frac{1}{2}})G_{8T}(z-w)\Big(G_{t-r}(x-z) + G_{t-\theta}(x-w) \Big) .
\end{align}
\end{lemma}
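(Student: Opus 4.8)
The plan is to use the product identity \eqref{mult-Gaussian} to rewrite $G_{t-r}(x-z)G_{t-\theta}(x-w)$ as $G_{2t-r-\theta}(z-w)\cdot G_{q}\big(x-m\big)$, where $q=\frac{(t-r)(t-\theta)}{2t-r-\theta}$ and $m$ is the appropriate convex combination of $z$ and $w$. The first factor $G_{2t-r-\theta}(z-w)$ is a heat kernel in the variable $z-w$ with a time that lies in $[0,2T]$; using the monotonicity bound \eqref{eq:gaussian-bound} one may replace it by a constant multiple of $G_{8T}(z-w)$ (picking up a factor like $(2t-r-\theta)^{-1/2}(8T)^{1/2}$; one must be careful near $t=r=\theta$, but since $2t-r-\theta\ge t\ge r-\theta$ and we will also produce a $(r-\theta)^{-1/2}$ factor below, the singularity can be absorbed — this is where the $1+(r-\theta)^{-1/2}$ term enters, and some bookkeeping is needed). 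So the task reduces to bounding the single kernel $G_q(x-m)$ by a constant times $G_{t-r}(x-z)+G_{t-\theta}(x-w)$.

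The core of the argument is the following pointwise fact about Gaussians. First, $q=\frac{(t-r)(t-\theta)}{2t-r-\theta}\le \min(t-r,t-\theta)$, so $q$ is \emph{smaller} than both $t-r$ and $t-\theta$; by monotonicity \eqref{eq:gaussian-bound}, $G_q(y)\le q^{-1/2}(t-\theta)^{1/2}G_{t-\theta}(y)$ and similarly with $t-r$. Second, the center $m=\frac{(t-\theta)z+(t-r)w}{2t-r-\theta}$ lies between $z$ and $w$, so $|x-m|\ge \min(|x-z|,|x-w|)$; whichever of $z,w$ is closer to $x$, shifting the center of a Gaussian toward that point only increases its value, i.e. $G_s(x-m)\le G_s(x-z)$ or $G_s(x-m)\le G_s(x-w)$ for the relevant $s$. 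Combining: if $|x-z|\le|x-w|$ then $G_q(x-m)\le q^{-1/2}(t-r)^{1/2}G_{t-r}(x-m)\le q^{-1/2}(t-r)^{1/2}G_{t-r}(x-z)$, and symmetrically in the other case; in either case $G_q(x-m)\le q^{-1/2}\max(t-r,t-\theta)^{1/2}\big(G_{t-r}(x-z)+G_{t-\theta}(x-w)\big)$. It remains to estimate the scalar prefactor $q^{-1/2}\max(t-r,t-\theta)^{1/2}\cdot(2t-r-\theta)^{-1/2}(8T)^{1/2}$. Writing $a=t-r$, $b=t-\theta$ with $0<a\le b\le T$ and $b-a=r-\theta$, we have $q^{-1/2}=\big(\tfrac{a+b}{ab}\big)^{1/2}\le \big(\tfrac{2}{a}\big)^{1/2}$ and $\max(a,b)^{1/2}=b^{1/2}\le T^{1/2}$, so the prefactor is $\lesssim T \, a^{-1/2}$. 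Finally $a^{-1/2}=(t-r)^{-1/2}$ must be controlled by $1+(r-\theta)^{-1/2}$: when $t-r\ge r-\theta$ this needs a separate treatment — there I would instead bound $q^{-1/2}\le (a^{-1}+b^{-1})^{1/2}\le (2(r-\theta)^{-1})^{1/2}$ using $a\ge b-a=r-\theta$, which gives the $(r-\theta)^{-1/2}$ factor, and when $t-r< r-\theta$ one has nothing better than $(t-r)^{-1/2}$, but $t-r<r-\theta$ forces $(r-\theta)^{-1/2}<(t-r)^{-1/2}$... so actually the clean route is: always bound $G_q(x-m)$ using whichever of the two estimates above is valid, and separately note $q^{-1/2}\le a^{-1/2}\cdot 2^{1/2}$ AND the $z$–$w$ kernel contributes its own time $2t-r-\theta\ge t\ge$ (nothing useful), so the honest statement is that the singular behavior is genuinely $(t-r)^{-1/2}+(r-\theta)^{-1/2}$-type and one checks $(t-r)^{-1/2}\le (r-\theta)^{-1/2}+ T^{-1/2}\cdot\mathbf 1_{\{t-r\ge r-\theta\}}$ crudely, or simply tracks that in the regime $t-r\ge r-\theta$ the kernel $G_{2t-r-\theta}(z-w)$ has time $\ge t-r$ and can be traded, via \eqref{eq:gaussian-bound} run the other direction, for a gain.

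The main obstacle, as the sketch above reveals, is precisely the scalar bookkeeping of the prefactor: one must verify that the product of the three powers of times coming from the two applications of monotonicity \eqref{eq:gaussian-bound} (one to pass from $G_{2t-r-\theta}$ to $G_{8T}$, one to pass from $G_q$ to $G_{t-r}$ or $G_{t-\theta}$) collapses to the claimed $8T^{1/2}(1+e^{1/16T})(1+(r-\theta)^{-1/2})$, with the exponential $e^{1/16T}$ coming from comparing Gaussian \emph{exponents} at different times (when lowering the time in a heat kernel via \eqref{eq:gaussian-bound} one in fact controls $G_s(x)\le s^{-1/2}t^{1/2}G_t(x)$ only because $e^{-|x|^2/2s}\le e^{-|x|^2/2t}$, but in the $z$–$w$ factor we are \emph{raising} the time to $8T$, which costs a factor $e^{|x|^2(\frac{1}{2\cdot 2T}-\frac{1}{16T})}$-type term — bounding $|z-w|$ is not available, so instead one keeps $G_{2t-r-\theta}(z-w)\le C\,G_{8T}(z-w)$ only on the event $2t-r-\theta\le 8T$, which always holds, together with $G_{\tau}(y)\le (8T/\tau)^{1/2}e^{|y|^2(\frac1{16T}-\frac1{2\tau})}G_{8T}(y)\le (8T/\tau)^{1/2}e^{1/16T}G_{8T}(y)$ — no, this still needs $|y|$ bounded). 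I will therefore handle the $z$–$w$ kernel by the identity $G_\tau(y)=\tau^{-1/2}(8T)^{1/2}G_{8T}(y)\exp\!\big(-\tfrac{|y|^2}{2}(\tfrac1\tau-\tfrac1{8T})\big)$ and bound the exponential by $1$ when $\tau\le 8T$ (since then $\tfrac1\tau-\tfrac1{8T}\ge 0$) — wait, that's the wrong sign. Since I am going \emph{from} $G_\tau$ \emph{to} $G_{8T}$ with $\tau\le 8T$, I have $\tfrac1{2\tau}\ge\tfrac1{16T}$ so $G_\tau(y)\le \tau^{-1/2}(8T)^{1/2}G_{8T}(y)$ directly with no exponential cost — this is exactly \eqref{eq:gaussian-bound} applied with $s=\tau,t=8T$. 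Good; so the $e^{1/16T}$ in the statement must instead arise from the symmetric complementary case, or from a crude initial reduction (e.g. splitting $G_{2t-r-\theta}(z-w)\le G_{2t-r-\theta}(z-w)\mathbf 1+\dots$), and I would locate it by carefully choosing, in the degenerate regime $t-r$ small, to first write $G_{t-\theta}(x-w)\le \big(\tfrac{t-\theta}{t-r}\big)^{1/2}G_{t-r}(x-w)\le \dots$, where comparing a Gaussian at $x-w$ to one at $x-z$ of the same (small) time produces the $\exp$ factor $\exp\!\big(\tfrac{|x-w|^2-|x-z|^2}{2(t-r)}\big)$ which is \emph{not} bounded — so this route fails and confirms that the clean proof must keep the center $m$ between $z$ and $w$ as above. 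In summary: the strategy is robust (product identity, then two monotonicity comparisons, the center living between $z$ and $w$ being the decisive geometric input), and the only real work — and the step most likely to consume the proof — is matching the explicit constant $8T^{1/2}(1+e^{1/16T})(1+(r-\theta)^{-1/2})$, which I expect to fall out of a routine but slightly fiddly case split on whether $t-r\lessgtr r-\theta$ combined with the elementary inequality $\alpha^{-1/2}\le 1+\alpha^{-1}$ and $q^{-1}=a^{-1}+b^{-1}$.
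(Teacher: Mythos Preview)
Your approach via the product identity \eqref{mult-Gaussian} contains a genuine error at its ``decisive geometric input''. You claim that since the center $m=\frac{(t-\theta)z+(t-r)w}{2t-r-\theta}$ is a convex combination of $z$ and $w$, one has $|x-m|\ge\min(|x-z|,|x-w|)$, whence $G_s(x-m)\le G_s(x-z)$ or $G_s(x-m)\le G_s(x-w)$. This is false: take $z=0$, $w=2$, $t-r$ close to $t-\theta$ so that $m\approx 1$, and $x=m$; then $|x-m|=0$ while $|x-z|=|x-w|\approx 1$. The correct inequality for a point $m$ between $z$ and $w$ is $|x-m|\le\max(|x-z|,|x-w|)$, which goes the wrong way. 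Worse, the intermediate bound you are aiming for, namely $G_q(x-m)\lesssim G_{t-r}(x-z)+G_{t-\theta}(x-w)$ with a constant independent of $z,w$, fails outright: at $x=m$ the left side equals $(2\pi q)^{-1/2}$, which is independent of $z-w$, whereas the right side tends to $0$ as $|z-w|\to\infty$. So after extracting $G_{2t-r-\theta}(z-w)$ from the product identity there is not enough decay left in $G_q(x-m)$, and the factorization strategy cannot be completed along these lines. The remainder of your sketch (already inconclusive on the scalar prefactor and on the origin of $e^{1/(16T)}$) therefore has no foundation.

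The paper's argument is entirely different and does not use \eqref{mult-Gaussian}. It splits into two cases according to whether $|z-w|\le 1$ or $|z-w|>1$. When $|z-w|\le 1$, one bounds crudely $G_{t-\theta}(x-w)\le(2\pi(t-\theta))^{-1/2}\le(2\pi)^{-1/2}(r-\theta)^{-1/2}$ and then inserts the missing factor $G_{8T}(z-w)$ \emph{for free} via the inequality $1\le e^{1/(16T)}(16\pi T)^{1/2}G_{8T}(z-w)$, valid because $|z-w|\le 1$; this is precisely where the constant $e^{1/(16T)}$ comes from. When $|z-w|>1$, the triangle inequality forces $|x-z|\ge\tfrac12|z-w|$ or $|x-w|\ge\tfrac12|z-w|$, so one of the two kernels on the left is itself bounded by $2G_{4(t-r)}(z-w)$ or $2G_{4(t-\theta)}(z-w)$; the hypothesis $|z-w|>1$ is then used a second time to absorb the time-singularity $(t-r)^{-1/2}$ or $(t-\theta)^{-1/2}$ into the Gaussian exponent, giving a bound by $4T^{1/2}G_{8T}(z-w)$ with no $(r-\theta)^{-1/2}$ factor in this case. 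Adding the two case bounds yields \eqref{eq:kernel-product}.
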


\begin{proof}
We split the analysis of this product into two cases: if $|w-z|\leq 1$ or if $|w-z|>1$. 

\medskip

{\em Case 1.} Assume that $|z-w| \leq 1$. In this case, we bound the factor $G_{t-\theta}(x-w)$. More precisely, using the fact that
\[
1 \leq e^{\frac{1}{16T}} \cdot e^{-\frac{|z-w|^2}{16T}} =  e^{\frac{1}{16T}} \cdot (16 \pi T )^{\frac{1}{2}}G_{8T}(z-w),
\]
we can bound for any $0<\theta<r<t\leq T$ and $x,w \in \mathbb{R}$,
\[
G_{t-\theta}(x-w) \leq (2\pi)^{-\frac{1}{2}}(t-\theta)^{-\frac{1}{2}} \leq (2\pi)^{-\frac{1}{2}}(r-\theta)^{-\frac{1}{2}} \leq (r-\theta)^{-\frac{1}{2}} (8T)^{\frac{1}{2}} e^{\frac{1}{16T}}  G_{8T}(z-w).
\]
{Multiplying this inequality by $G_{t-r}(x-z)$,} we bound the product as follows:
\begin{equation}
	\label{eq:G-prod-w-z-small}
	G_{t-r}(x-z)G_{t-\theta}(x-w) \leq (r-\theta)^{-\frac{1}{2}} (8T)^{\frac{1}{2}} e^{\frac{1}{16T}}  G_{8T}(z-w)G_{t-r}(x-z)
\end{equation}

{\em Case 2.} Assume that $|z-w|>1$.
By the triangle inequality, for any $x,y,z \in \mathbb{R}$
\[|z-w| \leq |x-z| + |x-w|.\]
This implies that either
$|x-z|\geq \frac{|z-w|}{2}$, or $|x-w| \geq \frac{|z-w|}{2}$.
If $|x-z| \geq \frac{|z-w|}{2}$, then
\[G_{t-r}(x-z) \leq G_{t-r}\left(\frac{z-w}{2}\right) = 2G_{4(t-r)}(z-w).\]
Similarly, if $|x-w|\geq  \frac{|z-w|}{2}$, then
$G_{t-\theta}(x-w) \leq 2G_{4(t-\theta)}(z-w)$.

These observations imply that the product is bounded as follows:
\[G_{t-r}(x-z)G_{t-\theta}(x-w) \leq 2G_{t-r}(x-z)G_{4(t-\theta)}(z-w) + 2G_{4(t-r)}(z-w)G_{t-\theta}(x-w).\]

{Since $|z-w|>1$, we have
$|z-w|^2>\frac{1}{2}+\frac{|z-w|^2}{2}$, and hence
}
\[G_{4(t-\theta)}(z-w) = (8\pi)^{-\frac{1}{2}}(t-\theta)^{-\frac{1}{2}} e^{-\frac{|z-w|^2}{8(t-\theta)}}
\leq (8\pi)^{-\frac{1}{2}}(t-\theta)^{-\frac{1}{2}} e^{-\frac{1}{16(t-\theta)}} e^{-\frac{|z-w|^2}{16(t-\theta)}}.
\]

We can remove the time singularities using the fact that $\sup_{x>0}x^{\frac{1}{2}}e^{-x}=\frac{1}{\sqrt{2 e}}$:
\[
(8\pi)^{-\frac{1}{2}}(t-\theta)^{-\frac{1}{2}} e^{-\frac{1}{16(t-\theta)} } 
\leq \sqrt{\frac{2}{\pi}} \cdot \frac{1}{\sqrt{2e}} \leq \pi^{-\frac{1}{2}}.
\]
Therefore, for any $\theta \in [0,t]$ and $|z-w|>1$,
\[
G_{4(t-\theta)}(z-w) \leq  \pi^{-\frac{1}{2}} e^{-\frac{|z-w|^2}{16T}} = 4 T^{\frac{1}{2}} G_{8T}(z-w).
\]
Similarly, for any $r \in [0,t]$ and $|z-w|>1$, we have:
$G_{4(t-r)}(z-w) \leq 4 T^{\frac{1}{2}}G_{8T}(z-w)$.
Therefore, when $|z-w|>1$, we have the bound:
\begin{equation}
	\label{eq:G-prod-w-z-big}
	G_{t-r}(x-z)G_{t-\theta}(x-w)
	\leq 8 T^{\frac{1}{2}}G_{8T}(z-w) \Big(G_{t-r}(x-z) + G_{t-\theta}(x-w)\Big).
\end{equation}
Finally, we can add \eqref{eq:G-prod-w-z-small} and \eqref{eq:G-prod-w-z-big} to obtain the desired conclusion \eqref{eq:kernel-product}.
\end{proof}

For the proof of the estimate on the second Malliavin derivative of the solution, we will use the following result, which is similar to Lemma \ref{lem:nualart123}.

\begin{lemma}
\label{lem2-nualart}
Let $(F_n)_{n\geq 1}$ be a sequence in $\bD^{2,2}$ such that $F_n \to F$ in $L^2(\Omega)$, and
\[
\sup_{n\geq 1} \bE \|D^2F_n\|_{\cH^{\otimes 2}}^2<\infty.
\]
 Then $F \in \bD^{2,2}$ and $(D^2F_n)_{n\geq 1}$ converges to $D^2F$ in the weak topology of $L^2(\Omega;\cH^{\otimes 2})$.
  \end{lemma}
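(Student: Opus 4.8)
The plan is to mimic the proof of Lemma \ref{lem:nualart123} (i.e.\ Lemma 1.2.3 of \cite{nualart06}), using the chaos-expansion criterion \eqref{crit} for membership in $\bD^{2,2}$ together with a weak-compactness argument in the Hilbert space $L^2(\Omega;\cH^{\otimes 2})$. First I would write the chaos expansions $F_n=\bE(F_n)+\sum_{m\geq 1}I_m(f_{n,m})$ and $F=\bE(F)+\sum_{m\geq 1}I_m(f_m)$ with $f_{n,m},f_m\in\cH^{\otimes m}$ (symmetric). Since $F_n\to F$ in $L^2(\Omega)$, the orthogonality of the chaos spaces gives $\bE(F_n)\to\bE(F)$ and, for each fixed $m$, $m!\,\|f_{n,m}-f_m\|_{\cH^{\otimes m}}^2\to 0$ as $n\to\infty$; in particular $f_{n,m}\to f_m$ in $\cH^{\otimes m}$ for every $m$.

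Next I would exploit the uniform bound. By \eqref{Dk-Jn} with $k=2$, for each $n$,
\[
\bE\|D^2F_n\|_{\cH^{\otimes 2}}^2=\sum_{m\geq 2}m(m-1)\,m!\,\|f_{n,m}\|_{\cH^{\otimes m}}^2\leq M:=\sup_{n\geq 1}\bE\|D^2F_n\|_{\cH^{\otimes 2}}^2<\infty.
\]
Fix $N\geq 2$. For every $n$,
\[
\sum_{m=2}^{N}m(m-1)\,m!\,\|f_{n,m}\|_{\cH^{\otimes m}}^2\leq M,
\]
and letting $n\to\infty$ (a finite sum, using $f_{n,m}\to f_m$) yields $\sum_{m=2}^{N}m(m-1)\,m!\,\|f_m\|_{\cH^{\otimes m}}^2\leq M$. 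Since $N$ is arbitrary, the series $\sum_{m\geq 2}m(m-1)\,m!\,\|f_m\|_{\cH^{\otimes m}}^2$ converges and is bounded by $M$; by the criterion \eqref{crit} with $k=2$, this shows $F\in\bD^{2,2}$ and $\bE\|D^2F\|_{\cH^{\otimes 2}}^2\leq M$.

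It remains to identify the weak limit of $(D^2F_n)$. The sequence $(D^2F_n)_{n\geq 1}$ is bounded in the Hilbert space $L^2(\Omega;\cH^{\otimes 2})$, so every subsequence has a further subsequence converging weakly to some limit $G\in L^2(\Omega;\cH^{\otimes 2})$; it suffices to show every such weak limit equals $D^2F$, which forces $D^2F_n\rightharpoonup D^2F$. To identify $G$, I would test against the total family of elements of the form $I_m(h_1\otimes\cdots\otimes h_m)\cdot(g_1\otimes g_2)$ with $h_i,g_j\in\cH$ — more simply, it suffices to check that the $m$-th chaos kernel of $G$ agrees with that of $D^2F$ for each $m$. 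Concretely, $D^2F_n=\sum_{m\geq 2}m(m-1)I_{m-2}(f_{n,m})$ (interpreting $I_{m-2}(f_{n,m})$ as an $\cH^{\otimes 2}$-valued multiple integral via $D^2I_m(f)=m(m-1)I_{m-2}(f)$, cf.\ \eqref{Dk-Jn}). Projecting $D^2F_n$ onto the $(m-2)$-nd chaos (tensored with $\cH^{\otimes 2}$) gives $m(m-1)I_{m-2}(f_{n,m})$, which converges in $L^2(\Omega;\cH^{\otimes 2})$ to $m(m-1)I_{m-2}(f_m)$ because $f_{n,m}\to f_m$ in $\cH^{\otimes m}$ and $\bE\|I_{m-2}(f_{n,m}-f_m)\|_{\cH^{\otimes 2}}^2\leq (m-2)!\,\|f_{n,m}-f_m\|_{\cH^{\otimes m}}^2\to 0$. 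Since strong convergence of each chaos projection together with the uniform bound $M$ pins down the weak limit, we get $G=\sum_{m\geq 2}m(m-1)I_{m-2}(f_m)=D^2F$. The main obstacle — really the only subtle point — is the passage from strong convergence of the individual chaos projections plus the uniform $\ell^2$-type bound to the conclusion that $G=D^2F$ as the weak limit; this is handled by noting that finite chaos truncations are dense and the tails are uniformly small by $M$, exactly as in the proof of Lemma \ref{lem:nualart123}. Everything else is bookkeeping with the isometry formulas \eqref{Dk-Jn}–\eqref{crit}.
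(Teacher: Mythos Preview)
Your proof is correct and follows essentially the same strategy as the paper: both establish $F\in\bD^{2,2}$ via the chaos-expansion criterion \eqref{crit} (you truncate and pass to the limit, the paper invokes Fatou's lemma --- these are equivalent), and both extract a weakly convergent subsequence of $(D^2F_n)$ via Banach--Alaoglu and then identify the limit using chaos projections.

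The only genuine difference is in the identification step. The paper tests the weak limit $\eta$ against $J_kG$ for arbitrary $G\in L^2(\Omega;\cH^{\otimes 2})$, and uses the duality $\bE\langle D^2F_n,J_kG\rangle_{\cH^{\otimes 2}}=\bE[F_n\,\delta_2(J_kG)]$ (where $\delta_2$ is the adjoint of $D^2$) together with $F_n\to F$ in $L^2(\Omega)$ to get $\bE\langle \eta,J_kG\rangle=\bE\langle D^2F,J_kG\rangle$ for all $k$. You instead show directly that each chaos projection $J_{m-2}(D^2F_n)=m(m-1)I_{m-2}(f_{n,m})$ converges \emph{strongly} in $L^2(\Omega;\cH^{\otimes 2})$ to $J_{m-2}(D^2F)$; since $J_{m-2}$ is weakly continuous, this forces $J_{m-2}\eta=J_{m-2}(D^2F)$ for all $m$, hence $\eta=D^2F$. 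Your route is slightly more elementary in that it avoids introducing $\delta_2$ and checking $J_kG\in{\rm Dom}(\delta_2)$; the paper's route is slightly cleaner in that it does not require computing $D^2F_n$ explicitly in terms of kernels. Either way, the remark about ``tails uniformly small by $M$'' is not actually needed for the identification --- weak continuity of the projections already suffices --- so your final sentence could be tightened.
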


\begin{proof} 
{
To prove that $F \in \bD^{2,2}$, we will use criterion \eqref{crit}. By \eqref{Dk-Jn}, for any $k \geq 2$,
\[
\bE\|D^2(J_k F_n)-D^2(J_k F)\|_{\cH^{\otimes 2}}^2 =k(k-1)\bE|J_k(F_n-F)|^2 \leq k(k-1)\bE|F_n-F|^2 \to 0,
\]
as $n \to \infty$. By Fatou's lemma,
\begin{align*}
\sum_{k\geq 2} \bE \|D^2 (J_k F)\|_{\cH^{\otimes 2}}^2&=\sum_{k\geq 2} \lim_{n \to \infty}
\bE \|D^2 (J_k F_n)\|_{\cH^{\otimes 2}}^2 \leq \liminf_{n \to \infty}\sum_{k\geq 2}
\bE \|D^2 (J_k F_n)\|_{\cH^{\otimes 2}}^2\\
&=\liminf_{n\to \infty}\bE \|D^2 F_n\|_{\cH^{\otimes 2}}^2 \leq \sup_{n\geq 1}\bE \|D^2 F_n\|_{\cH^{\otimes 2}}^2<\infty.
\end{align*}
Hence, $F \in \bD^{2,2}$. 

To prove the second statement, we proceed as in the proof of Proposition 3.5 of \cite{sanz05}. A corollary of Banach-Alaoglu theorem states that in a Hilbert space, any bounded sequence has a weakly convergent subsequence. In our case, the sequence $(D^2 F_n)_{n\geq 1}$ is bounded in $L^2(\Omega;\cH^{\otimes 2})$, and hence, it converges weakly to some $\eta \in L^2(\Omega;\cH^{\otimes 2})$,  along a subsequence $N' \subset \bN$. This means that for any $G \in  L^2(\Omega;\cH^{\otimes 2})$,
\[
\bE \langle D^2 F_n,G \rangle_{\cH^{\otimes 2}} \to \bE \langle \eta,G \rangle_{\cH^{\otimes 2}} \quad \mbox{as $n \to \infty,n \in N'$}.
\]
In particular, this holds for the process $J_k G=\{J_k G(\xi_1,\xi_2);\xi_1,\xi_2 \in \bR_{+}\times \bR \}$, which is also in $L^2(\Omega;\cH^{\otimes 2})$. Hence, for any $G \in  L^2(\Omega;\cH^{\otimes 2})$ and $k \geq 2$,
\[
\bE \langle D^2 F_n,J_k G \rangle_{\cH^{\otimes 2}} \to \bE \langle \eta,J_k G \rangle_{\cH^{\otimes 2}} \quad \mbox{as $n \to \infty,n \in N'$}.
\]

On the other hand, if $\delta_2: {\rm Dom}(\delta_2) \subseteq L^2(\Omega;\cH^{\otimes 2}) \to L^2(\Omega)$ is the adjoint of $D^2$, then by duality and the fact that $F_n \to F$ in $L^2(\Omega)$, we have:
\[
\bE \langle D^2 F_n,J_k G \rangle_{\cH^{\otimes 2}}=\bE\big[F_n \delta_2(J_k G)\big] 
\to \bE\big[F \delta_2(J_k G)\big] =\bE \langle D^2 F, J_k G \rangle_{\cH^{\otimes 2}}, \quad \mbox{as $n \to \infty$}.
\]
Here we used the fact that $J_k G \in {\rm Dom}(\delta_2)$. 
It follows that for any $G \in  L^2(\Omega;\cH^{\otimes 2})$, $\bE \langle \eta,J_k G \rangle_{\cH^{\otimes 2}}=\bE \langle D^2 F, J_k G \rangle_{\cH^{\otimes 2}}$ for any $k \geq 2$, and therefore,
\[
\bE \langle \eta,G \rangle_{\cH^{\otimes 2}}=\bE \langle D^2 F,  G \rangle_{\cH^{\otimes 2}}.
\]
This implies that $\eta=D^2 F$. We proved that every subsequence of $(D^2F_n)_{n\geq 1}$ which converges weakly in $L^2(\Omega;\cH^{\otimes 2})$, in fact converges weakly to $D^2F$. Therefore, the entire sequence converges weakly to $D^2F$.
}
\end{proof}

\begin{theorem}
\label{key-D2-th}
If Assumption \ref{assumpt2} holds, then
for any $p\geq 2$ and $T>0$, there exists a constant $\cC^{(2)}_{T,p}>0$ depending on $(p,T, L_b,L_\sigma,\|\sigma''\|_\infty, \|b''\|_\infty)$, such that for any $0<\theta<r<t<T$ and $x,z,w\in \bR$,
  	\[
  	\|D^2_{(r,z),(\theta,w)}u(t,x)\|_p \leq  \cC_{T,p}^{(2)} (1+|r-\theta|^{-\frac{1}{2}})G_{8T}(z-w) \big(G_{t-r}(x-z)+G_{t-\theta}(x-w)\big).
  	\]
  \end{theorem}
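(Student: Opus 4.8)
The plan is to follow the template of the proof of Theorem~\ref{thm:1st-deriv-prelim}: first establish the bound for the Picard iterations $u_n$ with a constant independent of $n$, then pass to the limit. Fix $0<\theta<r$ (the case $r<\theta$ follows by symmetry of $D^2$), and write $h_n(t,x):=\|D^2_{(r,z),(\theta,w)}u_n(t,x)\|_p$ and
\[
\Phi(t,x):=\big(1+|r-\theta|^{-1/2}\big)\,G_{8T}(z-w)\,\big(G_{t-r}(x-z)+G_{t-\theta}(x-w)\big),
\]
which, for fixed $(r,\theta,z,w)$, is a nonnegative solution of the heat equation in $(t,x)$ on $(r,T]$. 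The first step is to differentiate the recurrence~\eqref{rec-Dun} for $D_{r,z}u_{n+1}$ by $D_{\theta,w}$, using the chain rule~\eqref{chain} and the Heisenberg commutation rule~\eqref{Heisenberg} (Proposition~1.3.8 of~\cite{nualart06}) for the stochastic integral. Since $\theta<r$, the time-$\theta$ boundary term produced by commuting $D_{\theta,w}$ past the stochastic integral vanishes, while the one coming from $G_{t-r}(x-z)\sigma(u_n(r,z))$ survives, giving for $0<\theta<r<t$
\begin{align*}
D^2_{(r,z),(\theta,w)}u_{n+1}(t,x)
&= G_{t-r}(x-z)\,\sigma'(u_n(r,z))\,D_{\theta,w}u_n(r,z)\\
&\quad +\int_r^t\!\int_{\bR}G_{t-s}(x-y)\,b''(u_n(s,y))\,D_{\theta,w}u_n(s,y)\,D_{r,z}u_n(s,y)\,dyds\\
&\quad +\int_r^t\!\int_{\bR}G_{t-s}(x-y)\,b'(u_n(s,y))\,D^2_{(r,z),(\theta,w)}u_n(s,y)\,dyds\\
&\quad +\int_r^t\!\int_{\bR}G_{t-s}(x-y)\,\sigma''(u_n(s,y))\,D_{\theta,w}u_n(s,y)\,D_{r,z}u_n(s,y)\,W(ds,dy)\\
&\quad +\int_r^t\!\int_{\bR}G_{t-s}(x-y)\,\sigma'(u_n(s,y))\,D^2_{(r,z),(\theta,w)}u_n(s,y)\,W(ds,dy),
\end{align*}
with $D^2u_0\equiv0$. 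Taking $\|\cdot\|_p$ and using Minkowski's inequality, the BDG inequality~\eqref{BDG} together with Minkowski's integral inequality, the bounds $\|b'\|_\infty,\|b''\|_\infty,\|\sigma'\|_\infty,\|\sigma''\|_\infty<\infty$, Cauchy--Schwarz $\|D_{\theta,w}u_n\,D_{r,z}u_n\|_p\le\|D_{\theta,w}u_n\|_{2p}\|D_{r,z}u_n\|_{2p}$, and the first-order estimate~\eqref{key-un} at level $2p$, one obtains
\[
h_{n+1}(t,x)^2\le \mathcal I_1+\mathcal I_2+\mathcal I_3+C\Big[\big(\textstyle\int_r^t\!\int_{\bR}G_{t-s}(x-y)h_n(s,y)\,dyds\big)^2+\int_r^t\!\int_{\bR}G^2_{t-s}(x-y)h_n(s,y)^2\,dyds\Big],
\]
where $\mathcal I_1=C\,G^2_{t-r}(x-z)G^2_{r-\theta}(z-w)$, $\mathcal I_2=C\big(\int_r^t\!\int_{\bR}G_{t-s}(x-y)G_{s-r}(y-z)G_{s-\theta}(y-w)\,dyds\big)^2$, $\mathcal I_3=C\int_r^t\!\int_{\bR}G^2_{t-s}(x-y)G^2_{s-r}(y-z)G^2_{s-\theta}(y-w)\,dyds$, and $C$ depends on $(p,T,L_b,L_\sigma)$.

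The core of the proof is then to bound $\mathcal I_1,\mathcal I_2,\mathcal I_3$ by a constant multiple of $\Phi(t,x)^2$; this is where Lemma~\ref{lem:kernel-product} is used. For $\mathcal I_1$, since $r-\theta\le T$ we have $G_{r-\theta}(z-w)\le(r-\theta)^{-1/2}(8T)^{1/2}G_{8T}(z-w)$, so $\mathcal I_1\les\Phi(t,x)^2$ (constant depending on $T$). For $\mathcal I_2$ and $\mathcal I_3$, apply Lemma~\ref{lem:kernel-product} \emph{inside} the space integral, with $(t,x)$ replaced by $(s,y)$ (note $s\le T$), to the product $G_{s-r}(y-z)G_{s-\theta}(y-w)$, which is thereby replaced by a constant multiple of $(1+(r-\theta)^{-1/2})G_{8T}(z-w)\big(G_{s-r}(y-z)+G_{s-\theta}(y-w)\big)$. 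The remaining $(s,y)$-integrals are then evaluated using the semigroup property of $G$ (for $\mathcal I_2$, giving $(t-r)G_{t-r}(x-z)$ and $(t-r)G_{t-\theta}(x-w)$) and identities~\eqref{G2-G}--\eqref{G2-G2-2} (for $\mathcal I_3$), in each case after enlarging the $s$-range from $(r,t)$ to $(\theta,t)$ for the $G_{\cdot-\theta}$ term; both give $\les\Phi(t,x)^2$. Hence $h_{n+1}(t,x)^2\le A^2\Phi(t,x)^2+C[(\int_r^t\!\int G_{t-s}h_n)^2+\int_r^t\!\int G^2_{t-s}h_n^2]$ with $A,C$ depending on $(p,T,L_b,L_\sigma,\|b''\|_\infty,\|\sigma''\|_\infty)$.

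To close the iteration, note that since $\Phi$ is a sum of shifted heat kernels, it satisfies on $I=(r,T]$ the estimates $\int_r^t\!\int_{\bR}G_{t-s}(x-y)\Phi(s,y)\,dyds\le T\Phi(t,x)$ and $\int_r^t\!\int_{\bR}G^2_{t-s}(x-y)\Phi(s,y)^2\,dyds\les\Phi(t,x)^2$ (semigroup property and~\eqref{G2-G2-2}), so Lemma~\ref{lemA2-HNV} applies with $\rho=\Phi$, $I=(r,T]$, and produces a constant $\cC^{(2)}_{T,p}$ depending only on $(p,T,L_b,L_\sigma,\|b''\|_\infty,\|\sigma''\|_\infty)$ with $h_n(t,x)\le\cC^{(2)}_{T,p}\Phi(t,x)$ for all $n$. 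Finally, fixing $(t,x)$: $u_n(t,x)\to u(t,x)$ in $L^2(\Omega)$ by~\eqref{un-conv-u} and $\sup_n\bE\|D^2u_n(t,x)\|^2_{\cH^{\otimes2}}<\infty$ by Lemma~7.3 of~\cite{sanz05}, so Lemma~\ref{lem2-nualart} gives $u(t,x)\in\bD^{2,2}$ and $D^2u_n(t,x)\to D^2u(t,x)$ weakly in $L^2(\Omega;\cH^{\otimes2})$; Lemma~\ref{lem:weak-implies-pointwise} then transfers the bound $h_n(t,x)\le\cC^{(2)}_{T,p}\Phi(t,x)$ from $u_n$ to $u$, which is exactly the claimed inequality.

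The two delicate points are (i) deriving the recurrence — checking the measurability/integrability hypotheses that justify the Heisenberg commutation for the stochastic integral, and correctly tracking which boundary term survives because $\theta<r$ — and (ii) the reduction in the second paragraph of the three-kernel space--time integrals $\mathcal I_2,\mathcal I_3$ to multiples of $\Phi^2$. The singular factor $(1+|r-\theta|^{-1/2})$ is produced both by $\mathcal I_1$ and by Lemma~\ref{lem:kernel-product}, and appears to be intrinsic to this approach: the role of Lemma~\ref{lem:kernel-product} is precisely to trade the exact but $s$-non-integrable identity~\eqref{GG-id} for a bound that is a heat-equation solution in $(t,x)$ and therefore compatible with the Gronwall iteration, which is why the cruder bound of the theorem is obtained rather than the product-type bound~\eqref{key-D2-intro}.
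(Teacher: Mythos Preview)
Your proposal is correct and follows essentially the same route as the paper: derive the five-term recurrence \eqref{rec-D2un}, bound the three ``source'' terms using \eqref{key-un}, Lemma~\ref{lem:kernel-product}, and the monotonicity estimate \eqref{eq:gaussian-bound}, feed the resulting recursive inequality into Lemma~\ref{lemA2-HNV} with $\rho(t,x)=G_{t-r}(x-z)+G_{t-\theta}(x-w)$ (the paper puts the prefactor $(1+|r-\theta|^{-1/2})G_{8T}(z-w)$ into the constant $A$ rather than into $\rho$, but this is immaterial), and pass to the limit via Lemmas~\ref{lem2-nualart} and~\ref{lem:weak-implies-pointwise}. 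The one small deviation is your treatment of $\mathcal I_3$ (the paper's $\cA_4$): after Lemma~\ref{lem:kernel-product} you expand $(G_{s-r}+G_{s-\theta})^2\le 2(G_{s-r}^2+G_{s-\theta}^2)$ and apply \eqref{G2-G2-2} to each term separately (enlarging the $s$-range to $(\theta,t)$ for the second), whereas the paper keeps the sum intact and uses a duality argument based on the representation $\rho(t,x)=\int G_{t-r}(x-y)\mu(dy)$ with $\mu=\delta_z+G_{r-\theta}(\cdot-w)\,dy$; your shortcut is slightly simpler here and yields the same bound.
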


\begin{proof} 
Taking the Malliavin derivative to both sides of \eqref{rec-Dun}, and using the Heisenberg commutation principle \eqref{Heisenberg} and the chain rule \eqref{chain}, we obtain that:
\begin{align} 
D_{(r,z),(\theta,w)}^2 u_{n+1}(t,x) &=G_{t-r}(x-z)\s'\big(u_n(r,z)\big) D_{\theta,w} u_n(r,z)+\nonumber\\
& \int_r^t \int_{\bR} G_{t-s}(x-y) b''\big(u_n(s,y)\big) D_{\theta,w}u_n(s,y) D_{r,z}u_n(s,y) dyds+\nonumber\\
& \int_r^t \int_{\bR} G_{t-s}(x-y) b'\big(u_n(s,y)\big) D_{(r,z),(\theta,w)}^2 u_n(s,y) dyds+\nonumber\\
& \int_r^t \int_{\bR} G_{t-s}(x-y) \s''\big(u_n(s,y)\big) D_{\theta,w}u_n(s,y) D_{r,z}u_n(s,y)W(ds,dy)+\nonumber\\
\label{rec-D2un}
& \int_r^t \int_{\bR} G_{t-s}(x-y) \s'\big(u_n(s,y)\big) D_{(r,z),(\theta,w)}^2 u_n(s,y) W(ds,dy).
\end{align}
  
{
Using Minkowski inequality, followed by the inequality $(\sum_{i=1}^ 5 a_i)^2 \leq 5 \sum_{i=1}^5 a_i^2$, we get:}
  	\begin{equation} 
  		\label{eq:second-deriv-reprise}
  		\|D^2_{(r,z),(\theta,w)}u_{n+1}(t,x)\|_p^2 \leq 5\sum_{i=1}^5 \mathcal{A}_i,
  	\end{equation}
  	where 
  	\begin{align*}
  		\cA_1 &=  G_{t-r}^2(x-z) \|\s'\big(u_n(r,z)\big) D_{\theta,w} u_n(r,z) \|_p^2 \\
  		\cA_2 &= \left\|\int_r^t \int_{\bR} G_{t-s}(x-y) b''\big(u_n(s,y)\big) D_{\theta,w}u_n(s,y) D_{r,z}u_n(s,y) dyds\right\|_p^2 \\
  		\cA_3 &= \left\|\int_r^t \int_{\bR} G_{t-s}(x-y) b'\big(u_n(s,y)\big) D_{(r,z),(\theta,w)}^2 u_n(s,y) dyds\right\|_p^2 \\
  		\cA_4 &=  \left\|\int_r^t \int_{\bR} G_{t-s}(x-y) \s''\big(u_n(s,y)\big) D_{\theta,w}u_n(s,y) D_{r,z}u_n(s,y)W(ds,dy)\right\|_p^2\\
  		\cA_5 & = \left\|\int_r^t \int_{\bR} G_{t-s}(x-y) \s'\big(u_n(s,y)\big) D_{(r,z),(\theta,w)}^2 u_n(s,y) W(ds,dy)\right\|_p^2.
  	\end{align*}
 
{ 	
We recall that  $(L_b, L_{\s})$ and $\cC_{T,p}^{(1)}$ are given by \eqref{def-Lbs} and \eqref{key}.
}

\medskip
  	
$\bullet$ \underline{We treat $\cA_{1}$.} Using the fact that $\sigma'$ is bounded and the key estimate \eqref{key-un} for $Du_n$,
  	\[
  	\cA_{1} \leq  {\big(\cC_{T,p}^{(1)}\big)^2}  L_{\sigma}^2  G^2_{t-r}(x-z)G^2_{r-\theta}(z-w).
  	\]
{
  	Since $r-\theta<r<t<8t<8T$, by \eqref{eq:gaussian-bound}, 
 $G_{r-\theta}(z-w) \leq (8T)^{1/2} (r-\theta)^{-1/2}  G_{8T}(z-w)$.}
Hence,
  	\begin{equation} 
  		\label{eq:A1-bound}
  		\cA_1 \leq 8 T \big(\cC_{T,p}^{(1)}\big)^2  L_{\sigma}^2  \, (r-\theta)^{-1} G^2_{8T}(z-w)  G^2_{t-r}(x-z) .
  	\end{equation}
  
$\bullet$ \underline{We treat $\cA_{2}$.} 
By Minkowski's inequality, 
  	\begin{align*}
  		&\left\|\int_{r}^t \int_{\mathbb{R}}G_{t-s}(x-y) b''(u_n(s,y))D_{r,z}u_n(s,y)D_{\theta,w}u_n(s,y)dyds \right\|_p^2\nonumber\\
  		&\leq  \left(\int_{r }^t \int_{\mathbb{R}}G_{t-s}(x-y) \|b''(u(s,y))D_{r,z}u_n(s,y)D_{\theta,w}u_n(s,y)\|_pdyds\right)^2.
  	\end{align*}
Using Cauchy-Schwarz inequality, the fact that $b''$ is bounded, and the key estimate \eqref{key-un},  
  	\begin{align}
  \nonumber
  		\|b''(u(s,y))D_{r,z}u_n(s,y)D_{\theta,w}u_n(s,y)\|_p & \leq \|b''\|_\infty \|D_{r,z}u_n(s,y)\|_{2p}\|D_{\theta,w}u_n(s,y)\|_{2p} \nonumber\\
  \label{b-sec}
  		&\leq   \|b''\|_\infty  {\big(\cC_{T,2p}^{(1)}\big)^2}   G_{s-r}(y-z)G_{s-\theta}(y-w).
  	\end{align}
  	Therefore, 
  	\begin{align*}
  		&\cA_2 \leq  \|b''\|_{\infty}^2 \big(\cC_{T,2p}^{(1)}\big)^4  \left(\int_r^t \int_{\mathbb{R}}G_{t-s}(x-y)G_{s-r}(y-z)G_{s-\theta}(y-w)dyds\right)^2.
  	\end{align*}
  	
  	We use Lemma \ref{lem:kernel-product} to bound the product $G_{s-r}(y-z)G_{s-\theta}(y-w)$: 
\begin{align}
\nonumber  
& G_{s-r}(y-z)G_{s-\theta}(y-w) \leq \\
\label{prod}
& \quad \quad \quad 8T^{1/2}\big(1 + e^{\frac{1}{8T}}\big) \big(1 + (r-\theta)^{-\frac{1}{2}}\big) G_{8T}(z-w)\Big(G_{s-r}(y-z) + G_{s-\theta}(y-w)\Big).
\end{align}

Hence,  
  	\begin{align*}
  		\cA_2 \leq &  64\, T \, \|b''\|_{\infty}^2 \big(\cC_{T,2p}^{(1)}\big)^4   \big(1 + e^{\frac{1}{16T}}\big)^2(1 + \big(r-\theta)^{-\frac{1}{2}}\big)^2 \, G^2_{8T}(z-w) \nonumber\\
  		&\times\Bigg(\int_r^t \int_{\mathbb{{R}}}G_{t-s}(x-y)
  		\Big(G_{s-r}(y-z) 
  		+ G_{s-\theta}(y-w)\Big)dyds\Bigg)^2.
  	\end{align*}
  	
  	We can now use the semigroup property for the $dy$ integral, and so,
  	\begin{align} 
  \nonumber
  		\cA_2 \leq & {128} \, T^3 \, \|b''\|_{\infty}^2 \big(\cC_{T,2p}^{(1)}\big)^4 \big(1 + e^{\frac{1}{16T}}\big)^2 {\big(1 + (r-\theta)^{-1}\big)} G^2_{8T}(z-w)  \\
  \label{eq:A2-bound}
 &  \quad \quad \big(G_{t-r}(x-z) 
  		+G_{t-\theta}(x-w)\big)^2.
  	\end{align}

 $\bullet$ \underline{We treat $\cA_3$.} Using Minkowski's inequality, followed by H\"older's inequality for the measure $G_{t-s}(x-y)dyds$, and the assumption that $b'$ is bounded, we have:
  	\begin{equation} 
  		\label{eq:A3-bound}
  		\cA_3  	\leq L_b^2\left(\int_r^t \int_{\bR} G_{t-s}(x-y)\|D^2_{(r,z),(\theta,w)}u_n(s,y)\|_pdyds\right)^2.
  	\end{equation}

$\bullet$ \underline{We treat $\cA_4$.} Using BDG inequality \eqref{BDG}, followed by Minkowski's inequality,
  	\begin{align*}
  		\cA_4 & \leq z_{p}^2 \left\| \int_{r }^t \int_{\bR} G^2_{t-s}(x-y)  | \sigma''(u_n(s,y)) D_{r,z}u_n(s,y)D_{\theta,w}u_n(s,y)|^2dyds \right\|_{p/2} \\
  		& \leq
  		z_{p}^2  \int_{r }^t \int_{\mathbb{R}} G^2_{t-s}(x-y)  \| \sigma''(u_n(s,y)) D_{r,z}u_n(s,y)D_{\theta,w}u_n(s,y)\|_p^2dyds.
  	\end{align*}
{Similarly to \eqref{b-sec}, by Cauchy-Schwarz inequality and \eqref{key-un},
  	\begin{align*}
  		\|\s''(u(s,y))D_{r,z}u_n(s,y)D_{\theta,w}u_n(s,y)\|_p  \leq  \|\s''\|_\infty  \big(\cC_{T,2p}^{(1)}\big)^2   G_{s-r}(y-z)G_{s-\theta}(y-w).
  	\end{align*}
  }
  	Therefore,   
  	\begin{equation}
  		\label{A4-int}
  		\cA_4 \leq z_p^2 \|\s''\|_{\infty}^2 { \big(\cC_{T,2p}^{(1)}\big)^4 } \int_{r }^t \int_{\mathbb{R}} G^2_{t-s}(x-y)G^2_{s-r}(y-z)G^2_{s-\theta}(y-w)dyds.
  	\end{equation}
  	
{	
Now we use \eqref{prod} to bound the product $G_{s-r}^2(y-z)G_{s-\theta}^2(y-w)$.} Hence,
\begin{align}
\nonumber
\cA_4 & \leq 64  \, z_p^2 \, \|\s''\|_{\infty}^2  \big(\cC_{T,2p}^{(1)}\big)^4  \, T  \big(1 + e^{\frac{1}{8T}}\big)^2 \big(1 + (r-\theta)^{-\frac{1}{2}}\big)^2 G_{8T}^2 (z-w)\\
\label{A4-1}
& \quad
\int_{r }^t \int_{\bR} G^2_{t-s}(x-y) \Big(G_{s-r}(y-z) + G_{s-\theta}(y-w)\Big)^2 dyds.
\end{align}
 
{To bound the integral above,} we
define:
\begin{equation}
\label{def-rho}
\rho(t,x) =G_{t-r}(x-z) + G_{t-\theta}(x-w).
\end{equation}
Note that $\rho$ is a solution of the heat equation, and can be written as 
  	\begin{equation}
  		\label{eq:rho-A4}
  		\rho(t,x) = \int_{\mathbb{R}} G_{t-r}(x-y)\mu(dy), \quad \text{ with } \quad \mu(dy) = \delta_z(dy) + G_{r-\theta}(y-w)dy.
  	\end{equation}
{Using the fact that $\sup_{\|h\|_{\cH}=1}\langle f,h \rangle_{\cH}=\|f\|_{\cH}$ for any $f \in \cH$, we have:}
  	\begin{align*}
&  		\int_r^t \int_{\bR}G_{t-s}^2(x-y) \rho^2(s,y)dyds 
  		 =\sup_{\|h\|_{\cH}=1} \left(\int_r^t \int_{\bR} G_{t-s}(x-y)\rho(s,y)h(s,y)dyds\right)^2 \\
  & \quad \quad \quad = \sup_{\|h\|_{\cH}=1} \left(\int_{\bR}\int_r^t \int_{\bR}  G_{t-s}(x-y){G_{s-r}(y-z')}h(s,y)dyds {\mu(dz')}\right)^2 \\
  &\quad \quad \quad \leq \left[\int_{\bR} \left(\int_r^t \int_{\bR} G^2_{t-s}(x-y)G^2_{s-r}(y-z'){dy} ds \right)^{\frac{1}{2}} \mu(dz')\right]^2\\
  & \quad \quad \quad {=} \left(\frac{\pi}{4}\right)^{\frac{1}{2}} (t-r)^{\frac{1}{2}} \left(\int_{\bR}  G_{t-r}(x-z')\mu(dz')\right)^2 =
  \left(\frac{\pi}{4}\right)^{\frac{1}{2}} (t-r)^{\frac{1}{2}} \rho^2(t,x),
  	\end{align*} 
using the Cauchy-Schwarz inequality, relation \eqref{G2-G2-2}, and the definition of $\rho$. 
{Returning to \eqref{A4-1}, we obtain:}
  	\begin{align}
  		\nonumber
  		\cA_4 & \leq {64} \sqrt{\pi} \, T^{\frac{3}{2}} \, z_p^2 \|\s''\|_{\infty}^2  \big(\cC_{T,2p}^{(1)}\big)^4   \big(1 + e^{\frac{1}{8T}}\big)^2 {\big(1 + (r-\theta)^{-1}\big)} G_{8T}^2 (z-w)\\
  \label{eq:A4-bound}
  & \quad \quad 
   \Big(G_{t-r}(x-z) + G_{t-\theta}(x-w) \Big)^2.
  	\end{align}

$\bullet$ \underline{We treat $\cA_5$.}	
Using the BDG inequality \eqref{BDG}, followed by Minkowski's inequality and the fact that {$\sigma'$} is bounded, we have:
  	\begin{align}
  		\nonumber
  		\cA_5 &\leq z_p^2 \left\|\int_r^t \int_{\bR} G_{t-s}^2(x-y)|\s'(u_n(s,y)) D_{(r,z),(\theta,w)}^2 u_n(s,y)|^2 dy ds\right\|_{p/2} 
  		\\
  		\label{eq:A5-bound}
  		&\leq  z_{p}^2 {L_{\s}^2} \int_{r}^t \int_{\bR} G^2_{t-s}(x-y)\|D^2_{(r,z),(\theta,w)}u_n(s,y)\|_p^2dyds. 	
  	\end{align}

\medskip
  	
  	We now return to \eqref{eq:second-deriv-reprise}. Using the fact that $C_{t,p}$ is non-decreasing in $t$, we combine \eqref{eq:A1-bound}, \eqref{eq:A2-bound}, \eqref{eq:A3-bound}, \eqref{eq:A4-bound} and \eqref{eq:A5-bound}. {We group together $\cA_1$, $\cA_2$ and $\cA_4$.} We conclude that for any $0<\theta<r< T$ and $z,w\in \bR$ fixed,
  	\begin{align*}
  		& \|D^2_{(r,z),(\theta,w)}u_{n+1}(t,x)\|_p^2 \\
  		&\leq  5\left\{ A_T^2 \rho^2(t,x)+ {L_b^2} \left(\int_r^{t} \int_{\mathbb{R}} G_{t-s}(x-y)\|D^2_{(r,z),(\theta,w)}u_n(s,y)\|_pdyds\right)^2 \right. \\
  		&\qquad \left. + z_p^2 {L_{\s}^2} \int_{r}^{t} \int_{\mathbb{R}} G^2_{t-s}(x-y)\|D^2_{(r,z),(\theta,w)}u_n(s,y)\|_p^2dyds\right\}, \quad \mbox{for any} \quad t \in [r,T],x\in \bR
  	\end{align*}
  	where $\rho$ is given by \eqref{def-rho},
  	\begin{align*}
  		A_{T}^2 &:={B_{T}^2 } \big(1+ (r-\theta)^{-1}\big)G^2_{8T}(z-w),
  	\end{align*}
  and

\begin{equation}
\label{def-BT}
B_{T}^2=8 T \big(\cC_{T,p}^{(1)}\big)^2  L_{\sigma}^2 +
64\left(2 \, T^3 \, \|b''\|_{\infty}^2 +
    \sqrt{\pi} \, T^{\frac{3}{2}} z_p^2 \|\s''\|_{\infty}^2 \right) \big(\cC_{T,2p}^{(1)}\big)^4   \big(1 + e^{\frac{1}{8T}}\big)^2.
\end{equation}

The function $f_n(t,x) = \|D_{(r,z),(\theta,w)}^2 u_n(t,x)\|_p$ satisfies the assumptions of Lemma \ref{lemA2-HNV} {on $I=(r,T]$}, with $\rho(t,x)$ given by \eqref{def-rho}, $A=A_{T}$ and $C_t=5C_{p,L_b,L_{\sigma}}$, where $C_{p,L_b,L_{\sigma}}$ is given by \eqref{def-CpL}. Therefore, {there exists a constant $\cC_{T,p}^{(3)}>0$ depending on $(p,T,L_{b},L_{\sigma},\|b''\|_{\infty},\|\sigma''\|_{\infty})$,} such that for any $n\geq 1$, $0<r < t\leq T$ and $x,z \in \bR$,
  	\begin{equation}
  \label{D2-un}
  	\|D^2_{(r,z),(\theta,w)}u_n(t,x)\|_p \leq \cC_{T,p}^{(3)}\, A_{T} \rho(t,x). 
  	\end{equation}
{We let $\cC_{T,p}^{(2)}=\cC_{T,p}^{(3)}B_T$.}
  
\medskip

We apply Lemma \ref{lem2-nualart} to $F_n=u_n(t,x)$ and $F=u(t,x)$. 
It follows that $u(t,x)\in \bD^{2,2}$ and $\{D^2 u_n(t,x)\}_{n\geq 1}$ converges to $D^2 u(t,x)$ in the weak topology of $L^2(\Omega;\cH^{\otimes 2})$. Then, the conclusion follows by applying Lemma \ref{lem:weak-implies-pointwise} {with $H=\cH^{\otimes 2}=L^2((\bR_{+}\times \bR)^2)$} and relation \eqref{D2-un}.
In our application of this lemma, 
\[
\phi((r,z),(\theta,w))=\cC_{T,p}^{(2)} \big(1+ (r-\theta)^{-1}\big)^{1/2} G_{8T}(z-w)\Big(G_{t-r}(x-z)+G_{t-\theta}(x-w)\Big).
\]

\end{proof}
  
Next, we prove a uniform bound on spatial integrals of the second derivative.
  
\begin{theorem} 
\label{key2-th}
For any $p\geq 2$,  $0<\theta,r<t \leq T$ and $w,z \in \bR$,
\begin{equation}
\sup_{R>0}\left\|\int_{-R}^RD^2_{(r,z),(\theta,w)}u(t,x)dx\right\|_p \leq 2 {\cC_{T,p}^{(2)}}  \big( 1+  |r-\theta|^{-\frac{1}{2}}\big) G_{8T}(z-w),
\end{equation}
where $\cC_{T,p}^{(2)}$ is the constant from Theorem \ref{key-D2-th}.
\end{theorem}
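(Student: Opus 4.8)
\textbf{Proof proposal for Theorem \ref{key2-th}.}
The plan is to reduce the statement to the pointwise moment bound of Theorem \ref{key-D2-th} by pulling the $L^p(\Omega)$-norm inside the $dx$-integral. First I would invoke the symmetry of the iterated Malliavin derivative, $D^2_{(r,z),(\theta,w)}u(t,x) = D^2_{(\theta,w),(r,z)}u(t,x)$ almost surely, so that without loss of generality I may assume $\theta < r$; the case $\theta = r$ is vacuous since then $|r-\theta|^{-1/2}=+\infty$ and the asserted inequality holds trivially. (The boundary case $t = T$ left open by the strict inequality in Theorem \ref{key-D2-th} is handled by applying that theorem with any $T' > T$ in place of $T$, enlarging the constant accordingly, or simply by noting that its proof via Lemma \ref{lemA2-HNV} is valid on the closed interval $[r,T]$.)

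Next I would apply Minkowski's integral inequality for the $L^p(\Omega)$-norm, which gives
\[
\left\|\int_{-R}^R D^2_{(r,z),(\theta,w)}u(t,x)\,dx\right\|_p \le \int_{-R}^R \left\|D^2_{(r,z),(\theta,w)}u(t,x)\right\|_p \, dx,
\]
and then substitute the bound of Theorem \ref{key-D2-th},
\[
\left\|D^2_{(r,z),(\theta,w)}u(t,x)\right\|_p \le \cC_{T,p}^{(2)}\big(1 + |r-\theta|^{-\frac{1}{2}}\big) G_{8T}(z-w)\big(G_{t-r}(x-z) + G_{t-\theta}(x-w)\big).
\]
Since the prefactor $\cC_{T,p}^{(2)}(1 + |r-\theta|^{-1/2})G_{8T}(z-w)$ does not depend on $x$, it comes out of the integral, and it remains to bound $\int_{-R}^R \big(G_{t-r}(x-z) + G_{t-\theta}(x-w)\big)dx$ uniformly in $R$. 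Because $G_{t-r}(\cdot - z)$ and $G_{t-\theta}(\cdot - w)$ are probability densities on $\bR$, this integral is at most $\int_{\bR}G_{t-r}(x-z)\,dx + \int_{\bR}G_{t-\theta}(x-w)\,dx = 2$. Combining these displays and taking the supremum over $R>0$ yields exactly the claimed bound with constant $2\,\cC_{T,p}^{(2)}$.

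The heavy analytic work has already been carried out in Theorem \ref{key-D2-th}, so there is no substantive obstacle here; the only points requiring a word of care are the joint measurability of $(x,\omega)\mapsto D^2_{(r,z),(\theta,w)}u(t,x)(\omega)$ needed to apply Minkowski's integral inequality — which is supplied by the measurable-modification content of Lemma \ref{lem:weak-implies-pointwise} already used to construct $D^2u$ — and the bookkeeping around the ordering of $\theta,r$ and the endpoint $t=T$ noted above.

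\begin{proof}
See the proof proposal above.
\end{proof}
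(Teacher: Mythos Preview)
Your proposal is correct and follows essentially the same route as the paper's own proof: assume $\theta<r$ by symmetry, apply Minkowski's inequality to bring the $L^p(\Omega)$-norm inside the $dx$-integral, invoke the pointwise bound of Theorem \ref{key-D2-th}, and then use $\int_{\bR}G_{t-r}(x-z)\,dx + \int_{\bR}G_{t-\theta}(x-w)\,dx = 2$. Your additional remarks on the endpoint $t=T$ and measurability are reasonable but not present in the paper's (terser) argument.
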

  
\begin{proof}
Assume without loss of generality that $\theta<r$. By Minkowski's inequality,
\begin{align*}
\sup_{R>0}\left\|\int_{-R}^RD^2_{(r,z),(\theta,w)}u(t,x)dx\right\|_p
\leq \int_{\bR} \left\|D^2_{(r,z),(\theta,w)}u(t,x)\right\|_p dx.
\end{align*}
By Theorem \ref{key-D2-th},
\begin{align*}
  		&\int_{\bR} \left\|D^2_{(r,z),(\theta,w)}u(t,x)\right\|_p dx \\
  		&\leq \cC_{T,p}^{(2)} \big(1 + |r-\theta|^{-\frac{1}{2}}\big)G_{8T}(z-w) \int_{\bR} \big(G_{t-r}(x-z) + G_{t-\theta}(x-w)\big)dx\\
  		&=2 \cC_{T,p}^{(2)} \big( 1 + |r-\theta|^{-\frac{1}{2}} \big)G_{8T}(z-w).
  	\end{align*}

  	\end{proof}

\section{Ergodicity and limiting covariance}
\label{section-first-order}

In this section, we give the proof of Theorem \ref{main1}.

For ergodicity, we recall the following result, {which a variant of Lemma 7.2 of \cite{CKNP21}.}

\begin{lemma}[Lemma 4.2 of \cite{BZ24}]
	\label{erg-lem}
	A strictly stationary process $\{Y(x)\}_{x \in \bR^d}$ is ergodic if 
	\[
	\lim_{R \to \infty}\frac{1}{R^{2d}}{\rm Var}\left( \int_{[0,R]^d} g\Big( \sum_{j=1}^{k}b_j Y(x+\zeta_j) \Big)dx\right)=0,
	\]
	for all integers $k\geq 1$, for every $b_1,\ldots,b_k,\zeta_1,\ldots,\zeta_k \in \bR^d$ and $g(x)=\cos x$ or $g(x)=\sin x$.
\end{lemma}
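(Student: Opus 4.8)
The plan is to derive ergodicity from the mean ergodic theorem for the $\bR^d$-action by spatial shifts, tested against a total family in $L^2$. Realize $\{Y(x)\}_{x\in\bR^d}$ on a path space equipped with the group of spatial shifts $\{\tau_x\}_{x\in\bR^d}$, which preserve the law of $Y$ by strict stationarity, with the convention that $f(\tau_x\omega)$ replaces each evaluation $Y(\zeta)$ by $Y(x+\zeta)$. For $f\in L^2(\Omega,\sigma(Y),\bP)$ set $A_Rf=R^{-d}\int_{[0,R]^d}f\circ\tau_x\,dx$. The cubes $[0,R]^d$ form a F\o lner sequence, so von Neumann's mean ergodic theorem gives $A_Rf\to Pf$ in $L^2$ as $R\to\infty$, where $P$ is the orthogonal projection onto the shift-invariant functions in $L^2$; consequently $Y$ is ergodic if and only if $A_Rf\to\bE[f]$ in $L^2$ for every $f\in L^2(\Omega,\sigma(Y),\bP)$.

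First I would reduce this convergence to a total subset of $L^2$. Each $A_R$ is an $L^2$-contraction (Cauchy--Schwarz together with stationarity) and $\bE[A_Rf]=\bE[f]$, so the set $\cV$ of $f$ for which $A_Rf\to\bE[f]$ in $L^2$ is a closed linear subspace containing the constants; it therefore suffices to exhibit a family contained in $\cV$ whose linear span is dense in $L^2(\Omega,\sigma(Y),\bP)$. I would use the trigonometric functionals $\cos\big(\sum_{j=1}^k b_jY(\zeta_j)\big)$ and $\sin\big(\sum_{j=1}^k b_jY(\zeta_j)\big)$, equivalently the exponentials $\exp\big(i\sum_{j=1}^k b_jY(\zeta_j)\big)$, over all $k\geq1$ and all admissible $b_j,\zeta_j$. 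Their linear span is a conjugation-closed algebra of bounded functions containing $1$ (a product of two such exponentials is again one, after concatenating the frequency/location data); conditioning on a finite vector $(Y(\zeta_1),\dots,Y(\zeta_k))$ and using completeness of the characters $y\mapsto e^{i\langle b,y\rangle}$ in $L^2$ of the associated finite-dimensional law, combined with a monotone-class argument based on the fact that $\sigma(Y)$ is generated by the evaluations of $Y$ at finitely many points, shows that this span is dense in $L^2(\Omega,\sigma(Y),\bP)$.

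It then remains to check that $f=g\big(\sum_{j=1}^k b_jY(\zeta_j)\big)$ with $g\in\{\cos,\sin\}$ lies in $\cV$, and this is precisely the hypothesis of the lemma: by strict stationarity $\bE[A_Rf]=\bE\big[g\big(\sum_j b_jY(\zeta_j)\big)\big]=\bE[f]$, hence
\[
\|A_Rf-\bE[f]\|_{L^2}^2={\rm Var}(A_Rf)=\frac{1}{R^{2d}}\,{\rm Var}\!\left(\int_{[0,R]^d}g\Big(\sum_{j=1}^k b_jY(x+\zeta_j)\Big)dx\right)\longrightarrow 0
\]
as $R\to\infty$. Thus $A_Rf\to\bE[f]$ in $L^2$ for all $f$ in a total set, hence for all $f\in L^2(\Omega,\sigma(Y),\bP)$, and $Y$ is ergodic.

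I expect the only genuinely delicate point to be the density claim in the second paragraph --- that the trigonometric functionals of finitely many values of $Y$ are total in $L^2(\sigma(Y))$ --- which rests on finite-dimensional Fourier completeness combined with a monotone-class (or martingale convergence) argument; everything else is the standard mean ergodic machinery together with routine stationarity bookkeeping.
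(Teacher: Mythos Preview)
The paper does not prove this lemma at all: it is quoted as Lemma~4.2 of \cite{BZ24} (and flagged as a variant of Lemma~7.2 of \cite{CKNP21}) and then simply applied in the proof of Lemma~\ref{Eulalia}. So there is no ``paper's own proof'' to compare against.

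That said, your argument is sound and is essentially the standard route to such a criterion. The logical skeleton---realize the process on path space with the shift action, invoke the von~Neumann mean ergodic theorem over the F\o lner cubes $[0,R]^d$ to get $A_Rf\to Pf$ in $L^2$, identify ergodicity with $Pf=\bE[f]$, and then reduce to a total family in $L^2(\sigma(Y))$ using that the averaging operators $A_R$ are $L^2$-contractions---is exactly the argument underlying the cited lemmas. Your choice of total family (finite trigonometric functionals of $Y$, i.e.\ the algebra generated by $\exp\!\big(i\sum_j b_jY(\zeta_j)\big)$) is the natural one, and the density claim follows from the fact that complex exponentials are total in $L^2$ of any finite-dimensional marginal law, together with a monotone-class or martingale-convergence argument to pass from cylinder functions to all of $L^2(\sigma(Y))$; this is precisely the ``delicate point'' you flagged, and it goes through without trouble. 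The final identification $\|A_Rf-\bE[f]\|_2^2=R^{-2d}\,\mathrm{Var}\big(\int_{[0,R]^d}g(\cdots)\,dx\big)$ is immediate from stationarity, so the hypothesis closes the loop.
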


Based on this result, we have the following lemma.

\begin{lemma}
\label{Eulalia}
Let $\{Z(t,x);t\geq 0,x\in \bR\}$ be an adapted random field such that $\{Z(t,x)\}_{x\in \bR}$ is strictly stationary, and 
$Z(t,x) \in \bD^{1,2}$ for any $t\geq 0$ and $x \in \bR$. Assume that for any $0<r<t$ and $x,z \in \bR$, 
\[
\|D_{r,z}Z(t,x)\|_{2} \leq C_{t} G_{t-r}(x-z)
\]
where $C_{t}>0$ is a constant that depends on $t$. Then $\{Z(t,x)\}_{x\in \bR}$ is ergodic.
\end{lemma}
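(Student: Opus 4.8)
The plan is to verify the ergodicity criterion of Lemma~\ref{erg-lem} in dimension $d=1$. Fix $t>0$, an integer $k\geq 1$, reals $b_1,\dots,b_k,\zeta_1,\dots,\zeta_k$, and $g\in\{\cos,\sin\}$. Set $F_x:=\sum_{j=1}^k b_j Z(t,x+\zeta_j)$ and $Y(x):=g(F_x)$. Since $Z(t,\cdot)\in\bD^{1,2}$ and $F_x$ is a finite linear combination, $F_x\in\bD^{1,2}$; as $g\in C^1(\bR)$ with $g'$ bounded, the chain rule \eqref{chain} shows $Y(x)\in\bD^{1,2}$ with
\[
D_{r,z}Y(x)=g'(F_x)\sum_{j=1}^k b_j\,D_{r,z}Z(t,x+\zeta_j).
\]
Using $\|g'\|_\infty\leq 1$ and the hypothesis, $\|D_{r,z}Y(x)\|_2\leq C_t\sum_{j=1}^k|b_j|\,G_{t-r}(x+\zeta_j-z)$ for $0<r<t$, while $D_{r,z}Y(x)=0$ for $r>t$, because $Z$ is adapted, hence $Z(t,x)$ is $\cF_t$-measurable and \eqref{Mal-zero} applies.

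Next I would apply the covariance bound \eqref{Poin-cov}. For $x,x'\in\bR$, this gives
\[
\big|{\rm Cov}(Y(x),Y(x'))\big|\leq C_t^2\sum_{i,j=1}^k|b_i||b_j|\int_0^t\!\!\int_{\bR}G_{t-r}(x+\zeta_i-z)\,G_{t-r}(x'+\zeta_j-z)\,dz\,dr.
\]
By the semigroup property the inner integral equals $G_{2(t-r)}(x-x'+\zeta_i-\zeta_j)$, and the substitution $u=2(t-r)$ yields $\int_0^t G_{2(t-r)}(a)\,dr=\tfrac12\int_0^{2t}G_u(a)\,du=:h(a)$, a nonnegative function with $\int_{\bR}h(a)\,da=\tfrac12\int_0^{2t}\!\!\int_{\bR}G_u(a)\,da\,du=t<\infty$.

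Finally I would integrate over $[0,R]^2$. Since $\mathrm{Var}\big(\int_0^R Y(x)\,dx\big)=\int_0^R\!\!\int_0^R{\rm Cov}(Y(x),Y(x'))\,dx\,dx'$, and since $\int_0^R\!\!\int_0^R f(x-x'+c)\,dx\,dx'\leq R\int_{\bR}|f(v)|\,dv$ for every $c\in\bR$ and $f\in L^1(\bR)$ (a change of variables $w=x-x'$ and crude enlargement of the domain), we obtain
\[
\mathrm{Var}\Big(\int_0^R Y(x)\,dx\Big)\leq C_t^2\Big(\sum_{j=1}^k|b_j|\Big)^2 R\int_{\bR}h(v)\,dv=t\,C_t^2\Big(\sum_{j=1}^k|b_j|\Big)^2 R.
\]
Hence $R^{-2}\mathrm{Var}\big(\int_0^R Y(x)\,dx\big)=O(R^{-1})\to 0$ as $R\to\infty$, and Lemma~\ref{erg-lem} gives the ergodicity of $\{Z(t,x)\}_{x\in\bR}$.

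I do not foresee a genuine obstacle here. The only points needing slight care are justifying $Y(x)\in\bD^{1,2}$ and the pointwise $L^2$-bound on $D_{r,z}Y(x)$ via the chain rule (which is exactly why the criterion in Lemma~\ref{erg-lem} uses $g=\cos$ or $g=\sin$, functions with bounded derivative), and the elementary $L^1$-in-space estimate that converts the required $o(R^2)$ bound into a bound of order $R$; the remaining ingredients are the semigroup property of the heat kernel and bookkeeping of the constants, all of which are routine.
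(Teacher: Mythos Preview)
Your proof is correct and follows essentially the same route as the paper: both verify the criterion of Lemma~\ref{erg-lem} via the chain rule, the hypothesis on $\|D_{r,z}Z(t,x)\|_2$, and the semigroup property of $G$. The only cosmetic difference is that the paper applies the Poincar\'e inequality \eqref{Poincare} directly to $\int_0^R Y(x)\,dx$ and then uses Minkowski's inequality, whereas you apply the covariance bound \eqref{Poin-cov} to each pair $(Y(x),Y(x'))$ and then integrate; after unwinding, both lead to the same integral $\int_0^t\int_{\bR}\big(\int_0^R\|D_{r,z}Y(x)\|_2\,dx\big)^2\,dz\,dr$ and the same $O(R)$ bound.
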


\begin{proof}

	We apply Lemma \ref{erg-lem}. Let $b_1,\ldots,b_k \in \bR$ and $\zeta_1,\ldots, \zeta_k\in \bR$ be arbitrary. Assume that $g(x)=\cos x$ or $g(x)=\sin x$. We apply Poincar\'e inequality \eqref{Poincare}, {identity \eqref{Mal-zero}, the fact that $D_{r,z}$ commutes with the $dx$ integral}, and Minkowski's inequality. We deduce that
	\begin{align*}
		V_R:=& {\rm Var}\left( \int_0^R g \Big(\sum_{j=1}^{k} b_j Z(t,x+\zeta_j)\Big) dx\right)\leq {\bE\left\|D\left(\int_0^R g \Big(\sum_{j=1}^{k} b_j Z(t,x+\zeta_j)\Big) dx \right) \right\|_{\cH}^2 } \\
&=\bE \int_0^t \int_{\bR}\left|\int_0^R D_{r,z} g\Big(\sum_{j=1}^{k} b_j Z(t,x+\zeta_j)\Big) dx \right|^2 dzdr\\
		& =  \int_0^t \int_{\bR}\left\|\int_0^R D_{r,z} g\Big(\sum_{j=1}^{k} b_j Z(t,x+\zeta_j)\Big) dx \right\|_2^2 dzdr\\
		& \leq   \int_0^t \int_{\bR}\left(\int_0^R \left\|D_{r,z} g\Big(\sum_{j=1}^{k} b_j Z(t,x+\zeta_j)\Big)\right\|_2 dx \right)^2 dzdr.
	\end{align*}
	By the chain rule \eqref{chain} for the Malliavin derivative,
	\[
	D_{r,z} g\big(\sum_{j=1}^{k} b_j Z(t,x+\zeta_j)\big)=g'\Big(\sum_{j=1}^{k} b_j Z(t,x+\zeta_j) \Big) D_{r,z}\Big(\sum_{j=1}^{k} b_j Z(t,x+\zeta_j)\Big).
	\]
	Using the fact that $|g'|\leq 1$, followed by triangular inequality and the key estimate \eqref{key}, we obtain:
	\begin{align*}
		\big\|D_{r,z} g\big(\sum_{j=1}^{k} b_j Z(t,x+\zeta_j)\big)\big\|_2 & \leq \big\|D_{r,z}\big(\sum_{j=1}^{k} b_j Z(t,x+\zeta_j)\big)\big\|_2  \leq \sum_{j=1}^{k}|b_j| \big\|D_{r,z}Z(t,x+\zeta_j)\big\|_2 \\
		& \leq C_{t,2} \sum_{j=1}^{k}|b_j| G_{t-r}(x+\zeta_j-z).
	\end{align*}
	Hence
	\begin{align*}
		V_R & \leq C_{t} \int_0^t \int_{\bR} \left( \sum_{j=1}^{k}|b_j|  \int_0^R  G_{t-r}(x+\zeta_j-z) dx \right)^2 dzdr\\
		& \leq C_{t} k \sum_{j=1}^{k} b_j^2 \int_0^t \int_{\bR} \left(  \int_0^R  G_{t-r}(x+\zeta_j-z) dx \right)^2 dzdr\\
		& \leq C_{t} k \sum_{j=1}^{k} b_j^2 \int_0^t \int_{\bR}  \int_0^R  G_{t-r}(x+\zeta_j-z) dxdzdr= C_{t} k Rt\sum_{j=1}^{k} b_j^2.
	\end{align*}
	From this estimate, we obtain the desired conclusion: $V_R/R^2 \to 0$ as $R \to \infty$.
\end{proof}


\bigskip

{\bf Proof of Theorem \ref{main1}:} Part (i) follows from Lemma \ref{Eulalia} and the key estimate \eqref{key} for the Malliavin derivative $Du$.

{For part (ii), we proceed as on page 27 of \cite{NZ20}.}
Since $\rho_{t,s}(x-y)={\rm Cov}(u(t,x),u(t,y))$,
\begin{align*}
	\bE[F_R(t) F_R(s)]&=\int_{-R}^R \int_{-R}^R \rho_{t,s}(x-y)dxdy= \int_{\bR} \rho_{t,s}(z) \left(\int_{\bR} 1_{[-R,R]}(x) 1_{[-R,R]}(x-z) dx \right) dz\\
	& = \int_{\bR}\rho_{t,s}(z) {\rm Leb}\big([-R,R] \cap [-R+z,R+z]\big)dz=\int_{\{|z|\leq 2R \}}\rho_{t,s}(z) (2R-|z|)dz,
\end{align*}
where ${\rm Leb}$ is the Lebesgue measure. Then, by the dominated convergence theorem,
\[
\frac{1}{R}\bE[F_R(t) F_R(s)]=\int_{\{|z|\leq 2R \}}\rho_{t,s}(z) \left(2-\frac{|z|}{R} \right)dz \to 2\int_{\bR}\rho_{t,s}(z)dz, \quad \mbox{as $R \to \infty$},
\]
provided that 
\begin{equation}
	\label{rho-int}
	\int_{\bR}|\rho_{t,s}(x)|dx<\infty.
\end{equation}

To prove \eqref{rho-int}, we will use \eqref{Poin-cov}, \eqref{key}, identity \eqref{Mal-zero}, {and the semigroup property.} So,
\begin{align*}
	|\rho_{t,s}(x)| & \leq \int_{0}^{t \wedge s} \int_{\bR} \|D_{r,z} u(t,x)\|_{2} \|D_{r,z}u(s,0)\|_{2} dzdr\\
	& \leq  C \int_0^{t \wedge s} \int_{\bR}G_{t-r}(x-z)G_{s-r}(z)dzdr
	=C \int_{0}^{t\wedge s} G_{t+s-2r}(x)dr,
\end{align*}
where $C=\cC_{t,2}^{(1)} \cC_{s,2}^{(1)}$. Hence
\[
\int_{\bR}|\rho_{t,s}(x)|dx \leq C\int_{0}^{t\wedge s} \int_{\bR}G_{t+s-2r}(x)dxdr=C(t\wedge s).
\]

\section{Quantitative CLT}
\label{section-QCLT}

In this section, we give the proof of Proof of Theorem \ref{main-th}.(i). For this, we will use the second-order Poincar\'e inequality due to Vidotto \cite{vidotto20}.

 We recall that the {\em first contraction} between functions {$\varphi,\psi \in \cH \otimes \cH $} is given by:
	\begin{equation} \label{eq:contraction-def}
		(\varphi \otimes_1 \psi)\big((t,x),(s,y)\big) := \int_0^\infty \int_{\mathbb{R}} \varphi(\theta,w,t,x)\psi(\theta,w,s,y)dwd\theta.
	\end{equation}

\begin{proposition}[Proposition 3.2 of \cite{vidotto20}]
\label{prop18-1}
Let $F \in \bD^{2,4}$ be such that $\bE(F)=0$ and $\bE(F^2)=\sigma^2$. Then
\[
{\rm Var}\big(\langle DF,-DL^{-1}F\rangle_{\cH}\big) =\bE\left[\big|\sigma^2 -\langle DF,-DL^{-1}F\rangle_{\cH}\big|^2\right] \leq 4 \cA,
\]
where
\begin{equation}
\label{def-A}
\cA=\int_{(\bR_{+} \times \bR)^2} \|[D^2 F\otimes_1 D^2 F]((r,z),(s,y))\|_2 \, \|D_{r,z}FD_{s,y}F\|_2 drdz \, dsdy. 
\end{equation}

\end{proposition}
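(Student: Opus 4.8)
\noindent\emph{Proof sketch.} This is the second-order Gaussian Poincar\'e inequality established in \cite{vidotto20}; I only outline the structure of the argument, which follows the Malliavin--Stein methodology of \cite{NP12}. Write $g:=\langle DF,-DL^{-1}F\rangle_{\cH}$. The equality in the statement is immediate: since $\bE(F)=0$, identity \eqref{Mal-cor} gives $F=\delta(-DL^{-1}F)$, so the duality relation \eqref{duality} yields $\bE[g]=\bE\big[F\,\delta(-DL^{-1}F)\big]=\bE[F^2]=\sigma^2$; hence $\mathrm{Var}(g)=\bE\big[|\sigma^2-g|^2\big]$, and it remains to prove $\mathrm{Var}(g)\le 4\cA$.

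The hypothesis $F\in\bD^{2,4}$ guarantees that all $L^2(\Omega)$-norms in \eqref{def-A} are finite and, after a routine verification, that $g\in\bD^{1,2}$. Applying the Poincar\'e inequality \eqref{Poincare} to $g-\sigma^2$,
\[
\mathrm{Var}(g)\le\bE\|Dg\|_{\cH}^2=\int_{\bR_{+}\times\bR}\bE\big[(D_\beta g)^2\big]\,d\beta .
\]
By the product rule for the Malliavin derivative (a consequence of \eqref{chain}), with $\beta=(r,z)$ and $D^2F(\beta,\cdot):=\{D^2_{\beta,\gamma}F\}_\gamma\in\cH$,
\[
D_\beta g=\langle D^2F(\beta,\cdot),\,-DL^{-1}F\rangle_{\cH}+\langle DF,\,-D_\beta DL^{-1}F\rangle_{\cH}.
\]
To handle the operator $L^{-1}$ I would use the Ornstein--Uhlenbeck semigroup $(P_t)_{t\ge0}$ (acting as $e^{-nt}$ on the $n$-th chaos, cf.\ \eqref{def-L}), together with the commutation $D_\beta P_t=e^{-t}P_tD_\beta$ and the Mehler-type representation $-DL^{-1}F=\int_0^\infty e^{-t}P_tDF\,dt$; these give $-D_\beta DL^{-1}F=\int_0^\infty e^{-2t}P_t\big(D^2F(\beta,\cdot)\big)\,dt$. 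Consequently both summands of $D_\beta g$ are of the form $\langle D^2F(\beta,\cdot),w\rangle_{\cH}$ with $w$ obtained from $DF$, resp.\ from $D^2F(\beta,\cdot)$, by averaging the contractions $P_t$ against a weight of total mass $\le 1$.

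Using $(a+b)^2\le2a^2+2b^2$, it then suffices to bound $\bE\int_\beta\langle D^2F(\beta,\cdot),w\rangle_{\cH}^2\,d\beta$ for each such $w$. Expanding the square and integrating in $\beta$ first,
\[
\int_{\bR_{+}\times\bR}\langle D^2F(\beta,\cdot),w\rangle_{\cH}^2\,d\beta
=\int\!\!\int\big[D^2F\otimes_1 D^2F\big](\gamma,\gamma')\,w(\gamma)\,w(\gamma')\,d\gamma\,d\gamma',
\]
which is precisely the first contraction \eqref{eq:contraction-def} (the integrated variable $\beta$ being the first slot of both copies of $D^2F$). Taking expectations and applying the Cauchy--Schwarz inequality in $L^2(\Omega)$,
\[
\bE\int_\beta\langle D^2F(\beta,\cdot),w\rangle_{\cH}^2\,d\beta
\le\int\!\!\int\big\|[D^2F\otimes_1 D^2F](\gamma,\gamma')\big\|_2\,\big\|w(\gamma)\,w(\gamma')\big\|_2\,d\gamma\,d\gamma'.
\]
Since each $P_t$ is a contraction on every $L^p(\Omega)$, one controls $\|w(\gamma)w(\gamma')\|_2$ by $\|D_\gamma F\,D_{\gamma'}F\|_2$; recombining the two Leibniz terms (and the factor $2$) then produces the stated bound $\mathrm{Var}(g)\le4\cA$.

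The delicate step --- and, in my view, the principal obstacle --- is exactly this last domination: the pointwise inequality $\|(P_tG)(P_sG')\|_2\le\|GG'\|_2$ is \emph{false} in general, so $w(\gamma)w(\gamma')$ cannot be bounded at a fixed time $t$. One must instead keep the $e^{-t}\,dt$--averaging coupled with the Cauchy--Schwarz, exploiting that $-DL^{-1}F$ and $-D^2L^{-1}F(\beta,\cdot)$ are genuine contractions of $DF$ and $D^2F(\beta,\cdot)$ in $L^2(\Omega;\cH)$ and $L^2(\Omega;\cH^{\otimes2})$; alternatively, one bypasses the semigroup and runs the estimate on the chaos expansion $F=\sum_{q\ge1}I_q(f_q)$, where $g-\sigma^2$ is a sum of multiple integrals $I_m\big(\widetilde{f_p\otimes_s f_q}\big)$ and the contraction norms $\|f_p\otimes_s f_q\|$ are estimated through $\|D^2F\otimes_1 D^2F\|$ and $\|DF\cdot DF\|$. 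The other technical point worth isolating is the justification that $g\in\bD^{1,2}$ with $D$ commuting with the integral defining $g$, which is where the fourth-moment assumption $F\in\bD^{2,4}$ is genuinely used.
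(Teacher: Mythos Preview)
The paper does not give its own proof of this proposition: it is quoted verbatim as Proposition~3.2 of \cite{vidotto20} and used as a black box (the subsequent Corollary~\ref{prop18} merely combines it with \eqref{SM} and \eqref{Mal-cor}). So there is no in-paper argument to compare your sketch against.

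That said, your outline is a faithful summary of the strategy in \cite{vidotto20}: Poincar\'e on $g=\langle DF,-DL^{-1}F\rangle_{\cH}$, Leibniz for $D_\beta g$, the Mehler/semigroup representation of $-DL^{-1}$, and recognition of the $\beta$-integral as the contraction $D^2F\otimes_1 D^2F$. You also correctly flag the one genuinely nontrivial step, namely passing from $\|w(\gamma)w(\gamma')\|_2$ back to $\|D_\gamma F\,D_{\gamma'}F\|_2$; the naive pointwise bound $\|(P_tG)(P_sG')\|_2\le\|GG'\|_2$ fails, and Vidotto's actual argument keeps the $e^{-t}\,dt$ averaging together with Cauchy--Schwarz (your first suggested workaround) rather than using a chaos expansion. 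Since the paper itself does not reproduce this, your sketch already goes further than what the present article contains.
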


\begin{corollary}
\label{prop18}
Let $F \in \bD^{2,4}$ be such that $\bE(F)=0$ and $\bE(F^2)=\sigma^2$. Then
\[
	d_{TV}\left(\frac{F}{\sigma}, Z\right) =d_{TV}(F,\sigma Z)
\leq \frac{4}{\sigma^2}\sqrt{\cA},
	\]
	where $Z \sim N(0,1)$ and $\cA$ is given by \eqref{def-A}.
\end{corollary}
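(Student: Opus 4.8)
The plan is to concatenate three facts already available: the Stein--Malliavin bound \eqref{SM}, the representation \eqref{Mal-cor} of a centered $L^2$ random variable as a Skorohod integral, and Vidotto's second-order Poincar\'e inequality (Proposition \ref{prop18-1}).

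First I would dispose of the elementary scaling identity $d_{TV}(F/\sigma,Z)=d_{TV}(F,\sigma Z)$. Since $\sigma>0$, the map $B\mapsto \sigma B$ is a bijection of $\cB(\bR)$; for every $B\in\cB(\bR)$ one has $\bP(F/\sigma\in B)=\bP(F\in \sigma B)$ and $\bP(Z\in B)=\bP(\sigma Z\in\sigma B)$, so taking the supremum over $B$ on one side matches the supremum over $\sigma B$ on the other.

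Next, since $F\in\bD^{2,4}\subseteq\bD^{1,2}$ and $\bE(F)=0$, relation \eqref{Mal-cor} provides $F=\delta(v)$ with $v=-DL^{-1}F\in{\rm Dom}(\delta)$. Feeding this $v$ into the Stein--Malliavin bound \eqref{SM} (whose hypotheses $F\in\bD^{1,2}$, $\bE[F^2]=\sigma^2>0$, $F=\delta(v)$ are all met) gives
\[
d_{TV}\left(\frac{F}{\sigma},Z\right)\leq \frac{2}{\sigma^2}\sqrt{{\rm Var}\big(\langle DF,-DL^{-1}F\rangle_{\cH}\big)}.
\]
Then Proposition \ref{prop18-1}, which applies precisely because $F\in\bD^{2,4}$, yields ${\rm Var}(\langle DF,-DL^{-1}F\rangle_{\cH})\leq 4\cA$ with $\cA$ as in \eqref{def-A}. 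Substituting and using $\sqrt{4\cA}=2\sqrt{\cA}$ produces $d_{TV}(F/\sigma,Z)\leq \frac{4}{\sigma^2}\sqrt{\cA}$, which is the claim.

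There is no real obstacle: the statement is a bookkeeping corollary combining \eqref{SM}, \eqref{Mal-cor} and Proposition \ref{prop18-1}. The only things deserving a sentence are the verification that $F\in\bD^{2,4}$ comfortably supplies the weaker regularity needed for \eqref{Mal-cor} and \eqref{SM}, and the trivial scaling invariance of $d_{TV}$ recorded above.
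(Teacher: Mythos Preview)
Your proof is correct and follows the paper's own argument essentially verbatim: write $F=\delta(-DL^{-1}F)$ via \eqref{Mal-cor}, apply the Stein--Malliavin bound \eqref{SM}, and bound the resulting variance with Proposition \ref{prop18-1}. The paper does not even spell out the scaling identity $d_{TV}(F/\sigma,Z)=d_{TV}(F,\sigma Z)$, so your write-up is slightly more detailed than the original.
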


\begin{proof}

By \eqref{Mal-cor}, we write $F=\delta(v)$ with $v=-DL^{-1}F$. 
Then we use the Stein-Malliavin bound \eqref{SM}. To estimate the variance in \eqref{SM}, we use Proposition \ref{prop18-1}.

\end{proof}

\bigskip
{\bf Proof of Theorem \ref{main-th}.(i):}
By Corollary \ref{prop18} and Theorem \ref{main1}.(ii),
\[
d_{TV}\left( \frac{F_R(t)}{\sigma_{R}(t)},Z\right) \leq \frac{4}{\sigma_R^2(t)}\sqrt{\cA_R} \leq \frac{C_t^*}{R} \sqrt{\cA_R},
\]
where $C_t^*>0$ is a constant depending on $t$, and
\[
\cA_R:=\int_{([0,t] \times \bR)^2} \|[D^2F_R(t) \otimes_1 D^2F_{R}(t)]((r,z),(s,y))\|_2 \, \|D_{r,z}F_R(t)D_{s,y}F_R(t)\|_2 drdz ds dy .
\]

We will prove that
\begin{equation}
	\label{cAR}
	\cA_R \leq C_t R,
\end{equation}
where $C_t>0$ is a constant depending on $t$.

By Minkowski's inequality and the key estimate \eqref{key}, for any $p\geq 2$,
\[
	\|D_{r,z}F_R(t)\|_p \leq \int_{-R}^{R}\|D_{r,z}u(t,x)\|_p dx \leq \cC_{t,p}^{(1)} \int_{-R}^{R}G_{t-r}(x-z)dx.
\]
Therefore, using the Cauchy-Schwarz inequality, we infer that:
\begin{align} 
\nonumber
\|D_{r,z}F_R(t)D_{s,y}F_R(t)\|_2 & \leq \|D_{r,z}F_R(t)\|_4\|D_{s,y}F_R(t)\|_4 \\
\label{eq:DFR-L4}
& \leq
\big(\cC_{t,4}^{(1)} \big)^2 \int_{[-R,R]^2}G_{t-r}(x_1-z) G_{t-s}(x_2-y)dx_1dx_2.
\end{align}

By the definition of $F_R(t)$,
$D^2_{(r,z),(\theta,w)}F_R(t) = \int_{-R}^R D^2_{(r,z),(\theta,w)}u(t,x)dx$,
and hence,
\begin{align*}
	&[D^2F_R(t) \otimes_1 D^2F_{R}(t)]((r,z),(s,y)) \\
	&= \int_0^t \int_{\mathbb{R}}\int_{-R}^R D^2_{(r,z),(\theta,w)}u(t,x_3) dx_3 \int_{-R}^R  D^2_{(s,y),(\theta,w)}u(t,x_4) dx_4dwd\theta.
\end{align*}

By Minkowski's inequality, Cauchy-Schwarz inequality, Theorem \ref{key2-th} and the semigroup property,
\begin{align}
	\nonumber
	&\|[D^2F_R(t) \otimes_1 D^2F_{R}(t)]((r,z),(s,y)) \|_2 \\
	\nonumber
	&\leq \int_0^t \int_{\bR} \left\|\int_{-R}^R D^2_{(r,z),(\theta,w)}u(t,x_3)dx_3 \int_{-R}^R D^2_{(s,y),(\theta,w)}u(t,x_4)dx_4  \right\|_2dwd\theta \\
	\nonumber
	&\leq \int_0^t \int_{\bR} \left\|\int_{-R}^R D^2_{(r,z),(\theta,w)}u(t,x_1)dx_3\right\|_4 \left\| \int_{-R}^R D^2_{(s,y),(\theta,w)}u(t,x_2)dx_4  \right\|_4dwd\theta \\
	\nonumber
	&\leq {4\big(\cC_{t,4}^{(2)}\big)^2} \int_0^t \int_{\bR} \left(1 + |r-\theta|^{-\frac{1}{2}}\right)\left(1 +|s-\theta|^{-\frac{1}{2}}\right)G_{8t}(z-w)G_{8t}(y-w)dw\\
	&=4\big(\cC_{t,4}^{(2)}\big)^2 G_{16t}(z-y)\int_0^t \left(1 + |r-\theta|^{-\frac{1}{2}}\right)\left(1 +|s-\theta|^{-\frac{1}{2}}\right)d\theta \nonumber\\
	\label{eq:D2FR-contr-bound}
	&\leq 4\big(\cC_{t,4}^{(2)}\big)^2 (t+8t^{1/2})G_{16t}(z-y)\left(1+ |r-s|^{-\frac{1}{2}}\right).
\end{align}

For the last line above, we used the fact that for $r \in [0,t]$, 
{
\begin{equation}
\label{rs-int}
\int_0^t |r-\theta|^{-\frac{1}{2}} d\theta=\int_0^r (r-\theta)^{-\frac{1}{2}}d\theta+\int_r^t (\theta-r)^{-\frac{1}{2}}d\theta=2\big(r^{\frac{1}{2}}+(t-r)^{\frac{1}{2}}\big)\leq 4t^{\frac{1}{2}},
\end{equation}
}
and for $0<r<s<t$,
\begin{align*}
	&\int_0^t |r-\theta|^{-\frac{1}{2}} |s-\theta|^{-\frac{1}{2}}d\theta \\
	&=\int_0^r (r-\theta)^{-\frac{1}{2}}(s-\theta)^{-\frac{1}{2}}d\theta+\int_r^s (\theta-r)^{-\frac{1}{2}}(s-\theta)^{-\frac{1}{2}}d\theta+\int_s^t (\theta-r)^{-\frac{1}{2}}(\theta-s)^{-\frac{1}{2}}d\theta\\
	& \leq (s-r)^{-\frac{1}{2}} \int_0^{r} (r-\theta)^{-\frac{1}{2}}d\theta
	+ \int_{r}^{s} (s-\theta)^{-\frac{1}{2}}(\theta-r)^{-\frac{1}{2}}d\theta
	+(s-r)^{-\frac{1}{2}}\int_{s}^t (\theta-s)^{-\frac{1}{2}}d\theta \\
	&= 2(s-r)^{-\frac{1}{2}}r^{\frac{1}{2}}+\pi+2(s-r)^{-\frac{1}{2}}(t-s)^{\frac{1}{2}}  \leq (4+\pi)\,t^{\frac{1}{2}}(s-r)^{-\frac{1}{2}}\leq 8\,t^{\frac{1}{2}}(s-r)^{-\frac{1}{2}},
\end{align*}
using the Beta identity \eqref{beta} for the middle integral.

Now combining \eqref{eq:DFR-L4} and \eqref{eq:D2FR-contr-bound}, leads to the estimate
\[
\cA_R \leq C_t^{(1)} \int_{([0,t]\times \bR)^2} \int_{[-R,R]^2} \big( 1+ |r-s|^{-\frac{1}{2}}\big)G_{16t}(z-y)G_{t-r}(x_1-z)G_{t-s}(x_2-y) dx_1dx_2drdzdsdy,
\]
with {$C_{t}^{(1)}=4\big(\cC_{t,4}^{(1)}\big)^2 \big(\cC_{t,4}^{(2)}\big)^2 (t+8t^{1/2})$.}
By the semigroup property,
\[
\cA_R \leq C_t^{(1)} \int_{[0,t]^2} \int_{[-R,R]^2} \big( 1+ |r-s|^{-\frac{1}{2}}\big)G_{18t - r -s}(x_1-x_2)dx_1dx_2drds.
\]
Next, integrating in $x_1$ and $x_2$, {and using \eqref{rs-int}}, we obtain:
\[
	\cA_R \leq R C_t^{(1)} \int_{[0,t]^2} \big(1+|r-s|^{-\frac{1}{2}}\big)drds \leq RC_t^{(1)}
(t^2+4t^{\frac{3}{2}}).
\]
This concludes the proof of \eqref{cAR}.

\section{Functional CLT}
\label{section-FCLT}

In this section, we give the proof of Theorem \ref{main-th}.(ii). The proof is based on the classical method which requires to show finite-dimensional convergence and tightness. 

\medskip

{\bf Step 1.} (Finite dimensional convergence)
 We prove that for any $t_1,\ldots,t_m \in [0,T]$.
\[
\Big( \frac{1}{\sqrt{R}} F_R(t_1), \ldots, \frac{1}{\sqrt{R}} F_R(t_m)\Big) \stackrel{d}{\to}
\big( \cG(t_1), \ldots, \cG(t_m)\big), \quad \mbox{as $R \to \infty$},
\]
where $\stackrel{d}{\to}$ denotes convergence in distribution.
By Cram\'er-Wold theorem, we have to show that for any  $b_1,\ldots , b_m\in\bR$, 
\begin{equation}
\label{XR-conv}
X_R:=\frac{1}{\sqrt{R}}\sum_{j=1}^m b_j  F_R(t_j)
\stackrel{d}{\to}
\sum_{j=1}^m b_j  \cG_{t_j}\sim N(0,\tau^2), \quad \mbox{as $R \to \infty$},
\end{equation}
where $\tau^2={\rm Var}\big(\sum_{j=1}^m b_j  \cG_{t_j}\big)$,
that is
$X_R/\tau\stackrel{d}{\to} N(0,1)$ as $R \to \infty$.
By Theorem \ref{main1}.(ii), 
\[
\tau_R^2:={\rm Var}(X_R)=\frac{1}{R} \sum_{j,k=1}^{m}b_j b_k  \bE[F_R(t_j) F_R(t_k)]
\to \tau^2,  \quad \mbox{as $R \to \infty$}.
\]
Therefore, it is enough to prove that $X_R/\tau_R \stackrel{d}{\to} N(0,1)$ as $R\to \infty$.
We will prove something stronger, namely, if $Z \sim N(0,1)$, then
\begin{equation}
\label{fdd}
d_{TV}\left(\frac{X_R}{\tau_R},Z \right) \leq C R^{-1/2},
\end{equation}
where $C>0$ is a constant that depends on $(t_1,\ldots,t_m)$ and $(b_1,\ldots,b_m)$.

To prove \eqref{fdd}, we proceed as in the proof of the QCLT above. By Corollary \ref{prop18},
\begin{equation}
\label{Vido1}
d_{TV}\left(\frac{X_R}{\tau_R},Z \right) \leq \frac{4}{\tau_R^2}\sqrt{\cB_R},
\end{equation}
where 
\[
\cB_R=\int_{([0,T] \times \bR)^2} \|[D^2 X_R \otimes_1 D^2 X_R]((r,z),(s,y))\|_2 \, \|D_{r,z}X_R D_{s,y} X_R\|_2 drdz \, dsdy.
\]
we defined $T=\max\{t_1,\ldots,t_m\}$ and we used \eqref{Mal-zero}.
By Minkowski's inequality and \eqref{key},
\[
\|D_{r,z}X_R\|_p \leq \frac{1}{\sqrt{R}}\sum_{j=1}^{m}|b_j|\int_{-R}^R \|D_{r,z}u(t_j,x)\|_p dx \leq  \frac{\cC_{T,p}^{(1)}}{\sqrt{R}}\sum_{j=1}^{m}|b_j|\int_{-R}^R G_{t_j-r}(x-z) dx.
\]
Hence,
\begin{equation}
\label{XR-1}
\|D_{r,z}X_R D_{s,y} X_R\|_2 \leq \frac{\big(\cC_{T,4}^{(1)}\big)^2}{R}\sum_{j,k=1}^{m}|b_jb_k|\int_{[-R,R]^2} G_{t_j-r}(x_1-z) 
G_{t_k-r}(x_2-z)dx_1 dx_2.
\end{equation}

Similarly to \eqref{eq:D2FR-contr-bound},
\begin{align}
\nonumber
& \|[D^2 X_R \otimes_1 D^2 X_R]((r,z),(s,y))\|_2  \\
\nonumber
& \quad \leq  \frac{1}{R}\sum_{j,k=1}^{m} |b_j b_k|
\int_0^T \int_{\bR} \left\| \int_{-R}^{R} D_{(\theta,w),(r,z)}^2 u(t_j,x_3)dx_3 \right\|_4
\left\| \int_{-R}^{R} D_{(\theta,w),(r,z)}^2 u(t_k,x_4)dx_4 \right\|_4
d\theta dw\\
\nonumber
& \quad \leq \frac{\big(\cC_{T,4}^{(2)}\big)^2}{R} B\int_0^T \int_{\bR} \big(1+|r-\theta|^{-\frac{1}{2}}\big)\big(1+|s-\theta|^{-\frac{1}{2}}\big)G_{8T}(z-w)
G_{8T}(y-w)d\theta dw\\
\label{XR-2}
& \quad \leq \frac{\big(\cC_{T,4}^{(2)}\big)^2}{R} B \, (T+8T^{\frac{1}{2}})\,
 G_{16T}(z-y) \big(1+|r-s|^{-\frac{1}{2}}\big)
\end{align}
where $B=\sum_{j,k=1}^{m} |b_j b_k|$. Using \eqref{XR-1} and \eqref{XR-2}, we infer that
\begin{align*}
\cB_R & \leq \frac{\big(\cC_{T,4}^{(1)}\cC_{T,4}^{(2)}\big)^2}{R^2}B \, (T+8T^{\frac{1}{2}})
\sum_{j,k=1}^{m} |b_j b_k|\int_{[0,T]^2} \big(1+|r-s|^{-\frac{1}{2}}\big)\\
& \qquad \qquad \qquad \qquad  \left(\int_{[-R,R]^2} \int_{\bR^2}G_{16T}(z-y) G_{t_j-r}(x_1-z) 
G_{t_k-r}(x_2-z) dydz dx_1 dx_2 \right) drds.
\end{align*}
By the semigroup property, the inner integral is equal to
\[
\int_{[-R,R]^2} \int_{\bR^2}G_{16T+t_j+t_k-r-s}(x_1-x_2)dx_1 dx_2 \leq R.
\]
Hence, by \eqref{rs-int},
\begin{align*}
\cB_R & \leq \frac{\big(\cC_{T,4}^{(1)}\cC_{T,4}^{(2)}\big)^2}{R}B^2 \, (T+8T^{\frac{1}{2}}) 
\int_{[0,T]^2} \big(1+|r-s|^{-\frac{1}{2}}\big)drds \leq C_T B^2 R^{-1},
\end{align*}
where $C_T=\big(\cC_{T,4}^{(1)}\cC_{T,4}^{(2)}\big)^2 (T+8T^{\frac{1}{2}}) (T^2+4T^{\frac{3}{2}})$. Using \eqref{Vido1}, we conclude that \eqref{fdd} holds.

\medskip

{\bf Step 2.} (Existence of {H\"older} continuous modification and tightness)

We will use Kolmogorov-Chentsov criterion (Theorem 3.23 of \cite{kallenberg02}) to prove that $\{F_R(t)\}_{t \in [0,T]}$ has a {$\gamma$-H\"older continuous modification of order $\gamma \in (0,\frac{1}{2})$} (denoted also $F_R$), and the classical tightness criterion (Corollary 16.9 of \cite{kallenberg02}) to show that the family $\{R^{-1/2}F_R(\cdot)\}_{R>0}$ is tight in the space $C[0,T]$ of continuous functions on $[0,T]$.

\medskip

$u(t,x) -\bE u(t,x)= v(t,x) + Z(t,x)$,
where
\begin{align*}
v(t,x) &= \int_0^t \int_{\bR} G_{t-s}(x-y) \big(b(u(s,y)) -\bE b(u(s,y))\big)dyds,\\
Z(t,x) &= \int_0^t \int_{\bR} G_{t-s}(x-y)\sigma\big(u(s,y)\big)W(ds,dy).
\end{align*}

Using this notation,

\[
F_R(t) = \int_{-R}^R v(t,x)  dx + \int_{-R}^R Z(t,x)dx=:A_R(t)+B_R(t).
\]
We will apply Kolmogorov-Chentsov criterion and the tightness criterion to $A_R$ and $B_R$ separately. Then, we will use the fact that the sum of two $\gamma$-H\"older continuous processes is also $\gamma$-H\"older continuous, and the sum of two tight families in $C[0,T]$ is also tight.

\medskip

We examine $A_R$ first.
Note that {$v$} solves the PDE:
\[
\frac{\partial v}{\partial t}(t,x) = \frac{1}{2}\frac{\partial^2 v}{\partial x^2}(t,x) + \big(b(u(t,x) - \bE b(u(t,x)))\big).
\]
The spatial integral $A_R$ {of $v$} is differentiable and satisfies
\begin{align*}
\frac{d}{dt}A_R(t)& =\int_{-R}^R 	\frac{\partial v}{\partial t} (t,x)dx = \frac{1}{2}\int_{-R}^R \frac{\partial^2 {v}}{\partial x^2}(t,x)dx + \int_{-R}^R \big(b(u(t,x)) - \bE b(u(t,x))\big)dx\\
	&=\frac{1}{2}\left(\frac{\partial v}{\partial x}(t,R) - \frac{\partial v}{\partial x}(t,-R)\right) + \int_{-R}^R \big(b(u(t,x)) - \bE b(u(t,x))\big)dx\\
	&=:T_1(t) + T_2(t).
\end{align*}

{For any $0\leq s<t\leq T$,
$A_R(t)-A_R(s)=\int_s^t \frac{d}{dr}A_R(r)dr=\int_s^t \big(T_1(r)+T_2(r)\big)dr$,
and}
\begin{equation}
\label{est-A}
\|A_R(t)-A_R(s)\|_2 \leq \int_s^t \big( \|T_1(r)\|_2+\|T_2(r)\|_2 \big) dr.
\end{equation}
We estimate the $L^2(\Omega)$-norm of $T_1(t)$. By Minkowski's inequality,  for any $x \in \bR$,
\begin{align*}
 \left\|\frac{\partial v}{\partial x}(t,{x})\right\|_2 & \leq \int_0^t \int_{\mathbb{R}} \left|\frac{\partial G_{t-s}}{\partial x}(x-y) \right|\|b(u(s,y)) - \bE b(u(s,y))\|_2dyds\\
	&\leq 2L_bK_{t,2} \int_0^t \int_{\mathbb{R}} \left|\frac{x-y}{\sqrt{2\pi} (t-s)^{\frac{3}{2}}}\right|e^{-\frac{(x-y)^2}{2(t-s)}}dyds\\
	&\leq {2} L_b K_{t,2} \int_0^t (t-s)^{-\frac{1}{2}}ds{=4} L_b K_{t,2} t^{\frac{1}{2}}. 
	\end{align*}
{the upper bound for this term does not depend on $x$. In particular, applying this bound for $x=\pm R$, we obtain that:
\begin{equation}
\label{T1}
\|T_1(t)\|_2 \leq 4 L_b K_{t,2} t^{\frac{1}{2}}. 
\end{equation}
}

Next we estimate the $L^2(\Omega)$-norm of $T_2(t)$. {By the chain rule \eqref{chain},
$D_{r,z}T_2(t)=\int_{-R}^R b'(u(t,x))  D_{r,z} u(t,x)dx$.}
 Using Poincar\'e inequality \eqref{Poincare},
\begin{align*}
&\|T_2(t)\|_2^2 = {\rm Var}\big(T_2(t)\big) \leq \bE\|T_2(t)\|_{\cH}^2 =\bE \int_0^t \int_{\bR}|D_{r,z}T_2(t)|^2 dzdr \\
	&=\int_0^t \int_{\bR} \int_{-R}^R\int_{R}^R \bE \big[b'(u(t,x) b'(u(t,y) D_{r,z}u(t,x) D_{r,z}u(t,y) \big] dx dy dzdr \\
	&\leq L_b^2 \int_0^t \int_{\bR} \int_{-R}^R \int_{-R}^R \|D_{r,z}u(t,x)\|_2\|D_{r,z}u(t,y)\|_2dxdydzdr.
\end{align*}
We use \eqref{key} to bound this expression, {followed by the semigroup property:}
\begin{align}
\nonumber
\|T_2(t)\|_2^2 &\leq L_b^2 {\big(\cC_{t,2}^{(1)} \big)^2} \int_0^t \int_{\mathbb{R} }\int_{-R}^R \int_{-R}^R G_{t-r}(x-z)G_{t-r}(y-z)dxdydzdr\\
\nonumber
	& {= L_b^2 \big(\cC_{t,2}^{(1)} \big)^2 } \int_0^t \int_{-R}^R \int_{-R}^R G_{2(t-r)}(x-y)dxdy\\
\label{T2}
	&\leq L_b^2 \big(\cC_{t,2}^{(1)} \big)^2 \cdot 2Rt.
\end{align}
Combining estimates \eqref{est-A}, \eqref{T1} and \eqref{T2}, we see that for any $0<s<t\leq T$,
{
\begin{align} 
\nonumber
\|A_R(t)-A_R(s)\|_2 & \leq L_b \Big(4K_{T,2}+ \cC_{T,2}^{(1)}\sqrt{2R}\Big)\int_s^t \sqrt{r}dr \\
\label{eq:int-v-bound}
& \leq  2L_b \Big(4K_{T,2}+ \cC_{T,2}^{(1)})\Big)T^{1/2} R^{1/2} (t-s).
\end{align} 
In the previous line, we used the fact that $\sqrt{r} \leq T^{\frac{1}{2}}$ for all $r \leq T$.
Hence,
\[
\bE|A_R(t)-A_R(s))|^2 \leq 4L_b^2 \Big(4K_{T,2}+ \cC_{T,2}^{(1)}\Big)^2 T R (t-s)^2.
\]
By the two criteria mentioned above, we infer that $\{A_R(t)\}_{t \in [0,T]}$ has a $\gamma$-continuous modification for any $\gamma \in (0,\frac{1}{2})$, and the family $\{R^{-1/2}A_R(\cdot)\}_{R>0}$ is tight in $C[0,T]$. }

\medskip
 
{Next, we examine $B_R$.}
For any $0\leq r<t$ and $y\in \bR$, we denote
\[
\varphi_{t,R}(r,y)=\int_{-R}^{R}G_{t-r}(x-y)dx.
\]
We fix $s,t \in [0,T]$ with $s<t$. {We write $B_R(t)-B_R(s)= T_3 + T_4$,} where	
\begin{align*}
T_3 & =\int_0^s \int_{\bR} \big(\varphi_{t,R}(r,z)-\varphi_{s,R}(r,z)\big)\s\big(u(r,z)\big) W(dr,dz) \\
T_4 & =\int_s^t \int_{\bR} \varphi_{t,R}(r,z) \s\big(u(r,z)\big) W(dr,dz).
\end{align*}


{Recall that $K_{T,p,D_{\s}}$ is given by \eqref{s-u-bded}.}
We use BDG inequality \eqref{BDG}, followed by Minkowski inequality:
\begin{align*}
& \|T_3\|_p^2+\|T_4\|_p^2  \leq z_p^2 K_{T,2,D_{\s}}^2 
\left(\int_0^s\int_{\bR}|\varphi_{t,R}(r,z)-\varphi_{s,R}(r,z)|^2drdz+
\int_s^t\int_{\bR}\varphi_{t,R}^2(r,z)drdz\right)\\
&\quad = \frac{ z_p^2 K_{T,2,D_{\s}}^2  }{2\pi}\left(\int_0^s\int_{\bR}|\cF\varphi_{t,R}(r,\cdot)(\xi)-\varphi_{s,R}(r,\cdot)(\xi))|^2
d\xi dr+
\int_s^t\int_{\bR}|\cF\varphi_{t,R}(r,\cdot)(\xi)|^2 d\xi dr\right),
\end{align*}
using Plancherel theorem for the last line. Here $\cF \varphi(\xi)=\int_{\bR}e^{-i\xi x} \varphi(x) dx$ denotes the Fourier transform of $\varphi$. Since
\[
\cF\varphi_{t,R}(r,\cdot)(\xi)=\int_{-R}^R \cF G_{t-r}(x-\cdot)(\xi)dx=e^{-\frac{(t-r)|\xi|^2}{2}}\, \frac{\sin(R|\xi|)}{|\xi|},
\]
it follows that
\begin{align*}
 \|T_3\|_p^2+\|T_4\|_p^2  & \leq \frac{z_p^2 K_{T,p,D_{\s}}^2}{2\pi} \Bigg(
\int_0^s \int_{\bR} \Big( e^{-\frac{(t-r)|\xi|^2}{2}} -e^{-\frac{(s-r)|\xi|^2}{2}} \Big)^2  \frac{\sin^2(R|\xi|)}{|\xi|^2} d\xi dr+ \\
& \qquad \qquad \qquad  
\int_s^t \int_{\bR} e^{-(t-r)|\xi|^2} \, \frac{\sin^2(R|\xi|)}{|\xi|^2} d\xi dr\Bigg)\\
& \leq \frac{z_p^2 K_{T,p,D_{\s}}^2}{2\pi} C_T^2 R(t-s),
\end{align*}
where for the last line we used the estimates given on page 7182 of \cite{HNV20}. Hence,
\[
\|B_R(t)-B_R(s)\|_p \leq \|T_3\|_p+\|T_4\|_p \leq \frac{z_p K_{T,p,D_{\s}}}{\sqrt{2\pi}}  C_T R^{1/2}(t-s)^{1/2},
\]
or equivalently,
\[
\bE|B_R(t)-B_R(s)|^p \leq \left(\frac{z_p K_{T,p,D_{\s}}}{\sqrt{2\pi}}  C_T \right)^p R^{p/2}(t-s)^{p/2}.
\]

Choosing an arbitrary $p>2$ allows us to apply the two criteria mentioned above, to infer that $\{B_R(t)\}_{t \in [0,T]}$ has a $\gamma$-continuous modification for any $\gamma \in (0,\frac{1}{2})$, and the family $\{R^{-1/2}B_R(\cdot)\}_{R>0}$ is tight in $C[0,T]$.

\bigskip

{\em Acknowledgment.} (i) The first author is grateful for the invitation to visit Boston University in November 2023 when this project was initiated. (ii) The second author acknowledges support from the National Science Foundation under Grant No. DMS-1928930, while in residence at the Simons Laufer Mathematical Sciences Institute in Berkeley, California, during the fall semester of 2025.

\appendix

\section{Auxiliary Results}

{
In this section, we include the approximation result and the Gronwall-type lemma, which were used in the proofs of Theorems \ref{thm:1st-deriv-prelim} and \ref{key-D2-th}. }

We start with the approximation result. Recall that a Radon measure is finite on compact sets.

\begin{lemma}
\label{lem:weak-implies-pointwise}
Let $H=L^2(E,\cE,\mu)$, where $(E,\cE,\mu)$ is a finite-dimensional normed space, {$\cE$ is the Borel $\sigma$-field, and $\mu$ is a Radon measure on $(E,\cE)$.} Assume that the sequence $(X_n)_{n\geq 1} \subseteq L^2(\Omega;H)$ converges to $X$ in the weak topology of $L^2(\Omega;H)$. If there exist {$p> 1$} and a measurable function $\phi:E \to {[0,\infty)}$ such that
\begin{equation}
\label{ineq-Xn}
\sup_{n\geq 1}\|X_n(x)\|_p \leq \phi(x) \quad {\mbox{for $\mu$-almost all $x\in E$},}
\end{equation}
then
\[
\|X(x)\|_p \leq \phi(x) \text{ for $\mu$-almost all $x \in E$}.
\]
\end{lemma}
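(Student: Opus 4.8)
The plan is to exploit the fact that weak convergence in $L^2(\Omega;H)$ gives us control over inner products against test elements, and to test against carefully chosen functions supported where $\|X(x)\|_p$ is large. First I would reduce to the case $p=2$ as a warm-up, and then handle general $p>1$ by duality in the spatial variable. Concretely, fix any $G\in L^2(\Omega)$ and any $g\in L^2(E,\mu)$; then $G\cdot g\in L^2(\Omega;H)$, so weak convergence yields
\[
\bE\!\left[G\int_E X_n(x)g(x)\,\mu(dx)\right]\to \bE\!\left[G\int_E X(x)g(x)\,\mu(dx)\right].
\]
In particular, for any fixed measurable $A\subseteq E$ with $\mu(A)<\infty$ and any $G\in L^2(\Omega)$, taking $g=\mathbf{1}_A\cdot h$ shows that the $H$-valued random variables $Y_n:=\int_A X_n(x)h(x)\,\mu(dx)$ converge weakly in $L^2(\Omega)$ to $Y:=\int_A X(x)h(x)\,\mu(dx)$ for each fixed $h\in L^\infty(A)$. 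Weak convergence in $L^2(\Omega)$ implies $\|Y\|_2\le\liminf_n\|Y_n\|_2$ (lower semicontinuity of the norm under weak convergence), and more usefully $\|Y\|_{p'}\le\liminf_n\|Y_n\|_{p'}$ for $p'\le 2$; but since we want an $L^p$ bound with possibly $p>2$, the cleaner route is to bound $|\bE[GY]|$ directly.

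The key estimate: for $G\in L^q(\Omega)$ with $q=p/(p-1)$ and $h\in L^{q}(A,\mu)$, Hölder (in $\omega$ and then in $x$) gives
\[
\left|\bE\!\left[G\int_A X_n(x)h(x)\,\mu(dx)\right]\right|
\le \int_A \|X_n(x)\|_p\,|h(x)|\,\mu(dx)\,\|G\|_q
\le \|G\|_q\int_A \phi(x)|h(x)|\,\mu(dx),
\]
using hypothesis \eqref{ineq-Xn}. Passing to the limit (the left side converges by weak convergence once we know $G\cdot\mathbf 1_A h\in L^2(\Omega;H)$; this requires a truncation of $G$ and $h$ to reduce to the $L^2(\Omega;H)$ setting, then a density/monotone-class argument to remove the truncation), we obtain
\[
\left|\bE\!\left[G\int_A X(x)h(x)\,\mu(dx)\right]\right|\le \|G\|_q\int_A\phi(x)|h(x)|\,\mu(dx).
\]
Now I would take $h(x)=\mathbf 1_A(x)\,\|X(x)\|_p^{\,p-1}\,$ truncated suitably and $G$ an optimizing functional so that the left side reproduces $\int_A\|X(x)\|_p^{\,p}\,\mu(dx)$; combined with Hölder applied to the right side this forces $\int_A\|X(x)\|_p^{\,p}\mu(dx)\le\int_A\phi(x)^p\mu(dx)$ for every $A$ of finite measure. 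Since $\mu$ is a Radon measure on a finite-dimensional space, $E$ is $\sigma$-finite, so this holds for $\mu$-a.e.\ $A$ exhausting $E$, and a standard localization (differentiate the measures $A\mapsto\int_A\|X\|_p^p\,\mu$ and $A\mapsto\int_A\phi^p\,\mu$ using the Lebesgue–Besicovitch differentiation theorem, valid because $E$ is finite-dimensional and $\mu$ is Radon) gives $\|X(x)\|_p\le\phi(x)$ for $\mu$-a.e.\ $x$.

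The main obstacle is the justification that one may test weak convergence against the ``right'' element $G\cdot\mathbf 1_A h$: a priori this element need only be in $L^2(\Omega;H)$ when $G\in L^2(\Omega)$ and $h\in L^2(A,\mu)$, whereas the duality argument wants $G\in L^q(\Omega)$ and $h\in L^q(A,\mu)$ with $q=p/(p-1)$, which may be different from $2$. I would handle this by approximating: replace $G$ by $G_M:=G\,\mathbf 1_{\{|G|\le M\}}$ and $h$ by $h_N:=h\,\mathbf 1_{\{|h|\le N\}}$, apply the weak-convergence identity for the bounded versions (which do lie in all the needed spaces), obtain the inequality with $G_M,h_N$, and then let $M,N\to\infty$ using monotone/dominated convergence on the right-hand side (where $\phi|h_N|$ increases to $\phi|h|$) and on the left-hand side (where $G_M\to G$ in $L^q$). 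The only other delicate point is the passage from ``$\int_A\le\int_A$ for all finite-measure $A$'' to the pointwise inequality, which is exactly the Lebesgue differentiation argument and uses that $(E,\cE,\mu)$ is a finite-dimensional normed space with $\mu$ Radon — precisely the hypotheses imposed in the statement, applied in the proofs with $E=\bR_+\times\bR$ or $E=(\bR_+\times\bR)^2$.
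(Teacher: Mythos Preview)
There is a genuine gap at the step ``take $h(x)=\mathbf 1_A(x)\,\|X(x)\|_p^{\,p-1}$ and $G$ an optimizing functional so that the left side reproduces $\int_A\|X(x)\|_p^{\,p}\,\mu(dx)$.'' With test elements of tensor form $G(\omega)h(x)$, optimizing over $G$ with $\|G\|_q\le 1$ yields only $\bigl\|\int_A X(x)h(x)\,\mu(dx)\bigr\|_p$, which by Minkowski is \emph{at most} $\int_A\|X(x)\|_p|h(x)|\,\mu(dx)$, not equal to it. There is no choice of a single random variable $G$ (independent of $x$) that turns $\bE\bigl[G\int_A Xh\bigr]$ into $\int_A\|X(x)\|_p^p\,\mu(dx)=\int_A\bE|X(x)|^p\,\mu(dx)$; the optimizer would have to be $X(\omega,x)|X(\omega,x)|^{p-2}\|X(x)\|_p^{1-p}$, which depends on $x$ as well. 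So the claimed integrated inequality $\int_A\|X\|_p^p\le\int_A\phi^p$ does not follow, and the subsequent Lebesgue--Besicovitch step has nothing to differentiate.

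The paper's proof fixes exactly this point by testing against a \emph{joint} element $Y(\omega,x)=\mathbf 1_{A_{\varepsilon,R}}(x)\,\mathbf 1_{B_N}(\omega,x)\,X(\omega,x)|X(\omega,x)|^{p-2}\|X(x)\|_p^{1-p}$, which is not a tensor product; the truncation $B_N=\{|X|\le N\|X\|_p\}$ ensures $Y\in L^2(\Omega;H)$ so that weak convergence applies, and the normalization gives $\|Y(x)\|_{q}\le \mathbf 1_{A_{\varepsilon,R}}(x)$. Pairing with $X_n$ and passing to the limit then yields $\int_{A_{\varepsilon,R}}\|X(x)\|_p\,\mu(dx)\le\int_{A_{\varepsilon,R}}\phi(x)\,\mu(dx)$ directly on the bad set $A_{\varepsilon,R}=\{\|X(x)\|_p\ge\phi(x)+\varepsilon,\ \phi(x)\le R,\ \|x\|\le R\}$, forcing $\mu(A_{\varepsilon,R})=0$ without any differentiation argument. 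If you want to salvage your route, you must either allow non-product test functions (which is essentially the paper's argument), or use your inequality $\|\int_A Xh\|_p\le\int_A\phi|h|$ with $h=\mathbf 1_B/\mu(B)$ for shrinking balls $B$ together with a Bochner Lebesgue differentiation theorem and Fatou for $L^p$-norms --- but that is a different argument from the one you sketched.
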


\begin{proof}
Define the sets
\begin{align*}
	A_{\e,R}&:= \{x \in E; \|x\|_E \leq R,\ \phi(x) + \e \leq \|X(x)\|_p \text{ and } \phi(x) \leq R\}\\
	B_N & := \left\{(\omega,x) \in \Omega \times E;  |X(\omega,x)|\leq N \|X(x)\|_p  \right\}.
\end{align*}
Note that $\mu(A_{\e,R})<\infty$,
since $A_{\e,R}$ is contained in the closed ball $\{x\in E;\|x\|_E \leq R\}$ (which is compact, since $E$ is finite-dimensional), and $\mu$ is Radon.

Define for $(\omega,x) \in \Omega \times E$,
\[Y(\omega,x) : = 1_{A_{\e,R}}(x) 1_{B_N}(\omega,x)X(\omega,x)|X(\omega,x)|^{p-2} \|X(x)\|_p^{1-p}.\]
This process is designed to have the property that for each fixed $x \in E$,
\begin{equation} \label{eq:Y-L-p-p-1}
	\|Y(x)\|_{\frac{p}{p-1}} \leq 1_{A_{\e,R}}(x).
\end{equation}
{
Indeed,
\begin{align*}
\bE|Y(x)|^{\frac{p}{p-1}}&=1_{A_{\e,R}}(x) \bE \big[ 1_{B_N}(x) |X(x)|^p\big] \ \|X(x)\|_{p}^{-p} \\
& \leq 1_{A_{\e,R}}(x) \bE  |X(x)|^p \ \|X(x)\|_{p}^{-p} =1_{A_{\e,R}}(x).
\end{align*}
}

The definition of $B_N$ also guarantees that the process is bounded pointwise:
\[
|Y(\omega,x)| \leq{ N^{p-1}} 1_{A_{\e,R}}(x).
\]
An immediate consequence of this pointwise bound is that $Y \in L^2(\Omega;H)$, since
\[
\bE \int_E |Y(x)|^2 \mu(dx) \leq {N^{2(p-1)}} \mu(A_{\e,R})<\infty.
\]


By H\"older's inequality and relations \eqref{ineq-Xn} and \eqref{eq:Y-L-p-p-1}, for each $n\geq 1$ we have:
\begin{align*}
& \bE \int_{E} X_n(x)Y(x)\mu(dx) \leq 
\int_{E} \big|\bE\big[X_n(x)Y(x)\big]\big| \mu(dx) \\
& \leq \int_{E} \|X_n(x)\|_p \|Y(x)\|_{\frac{p}{p-1}}\mu(dx) \leq { \int_{A_{\e,R}} \phi(x)\mu(dx)} \\
\end{align*}
Importantly, the defintion of $A_{\e,R}$ guarantees that
\[\int_{A_{\e,R}} \phi(x)\mu(dx) \leq R \mu(A_{\e,R})<\infty.\]
From the definition of $Y$,
\begin{align*}
\bE\int_{E} X(x)Y(x)\mu(dx) & = \int_{A_{\e,R}} \bE\big[1_{B_N}(x)|X(x)|^p\big] \cdot \| X(x)\|_p^{1-p}\mu(dx).
\end{align*}

By weak convergence,
$
\bE \int_{E} X_n(x)Y(x)\mu(dx)\to \bE \int_{E} X(x)Y(x)\mu(dx)$.
This leads to the statement
\[
\int_{A_{\e,R}} \bE\big[1_{B_N}(x)|X(x)|^p\big] \cdot \| X(x)\|_p^{1-p}\mu(dx) \leq \int_{A_{\e,R}}\phi(x)\mu(dx). 
\]

Letting $N \to \infty$, we obtain: $
\int_{A_{\e,R}} \|X(x)\|_p \mu(dx) \leq \int_{A_{\e,R}}\phi(x)\mu(dx)$.
By the definition of $A_{\e,R}$, 
\[
\int_{A_{\e,R}} (\phi(x) + \e)\mu(dx) \leq \int_{A_{\e,R}}\phi(x)\mu(dx), 
\]
which proves that $\mu\left(A_{\e,R}\right)=0$ because $\int_{A_{\e,R}}\phi(x)\mu(dx)<\infty$. Letting $\e \to 0$ and $R \to \infty$, we conclude that
$\|X(x)\|_p \leq \phi(x) $ for $\mu$-almost all $x \in E$.
\end{proof}


\begin{lemma}
\label{lemA2-HNV}
Fix $r>0$ and let $\rho(t,x) = \int_{\mathbb{R}} G_{t-r}(x-y)\mu(dy)$ be a solution to the heat equation for $t>r$, where $\mu$ is a non-negative  measure on $\bR$. {Assume that $I=(r,T]$ or $I=(r,\infty)$.}
Let $f_n:I \times \mathbb{R}\to \bR_+$ be a sequence of functions such that for any $t \in I$ and $x \in \bR$,
$f_0(t,x)=0$
and for any $n \geq 1$,
\begin{equation}
	\label{ineq-f}
	f_{n+1}^2(t,x) \leq C_t \left\{A^2\rho^2(t,x) + \left(\int_r^t \int_{\mathbb{R}} G_{t-s}(x-y)f_n(s,y)dyds \right)^2+\int_r^t \int_{\bR} G_{t-s}^2(x-y)f_n^2(s,y)dyds  \right\},
\end{equation}
where $A>0$ and $C_t>0$ is a constant which is non-decreasing in $t$, both of which are independent of $n$. Then for any $n \geq 1$, $t\in I$ and $x \in \bR$,
\begin{equation}
\label{fn-bound}
f_n(t,x) \leq \cC_t A \rho(t,x), 
\end{equation}
where
\[
\cC_t= 3^{\frac{1}{3}} C_t^{1/2}  \exp\left\{\frac{3C_t^3}{2} \left[(t-r)^6 + 
\frac{\Gamma(1/4)^4}{8\pi^{5/2}}
(t-r)^{\frac{3}{2}}\right] \right\}.
\]
\end{lemma}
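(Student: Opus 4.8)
\emph{Proof proposal.} The plan is to reduce the claim to a scalar linear Volterra inequality and then iterate it. If $\mu\equiv 0$, then $\rho\equiv 0$ and \eqref{ineq-f} forces $f_n\equiv 0$ for all $n$, so assume $\mu\neq 0$; then $\rho(t,x)>0$ for all $t>r$ and $x\in\bR$. Set
\[
g_n(t):=\sup_{x\in\bR}\frac{f_n^2(t,x)}{\rho^2(t,x)},
\]
so that $f_n(s,y)\le \sqrt{g_n(s)}\,\rho(s,y)$; since $f_n\ge 0$, the desired bound \eqref{fn-bound} is equivalent to $g_n(t)\le \cC_t^2 A^2$ for all $n$. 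A straightforward induction using the estimates of Step~1 shows each $g_n$ is locally bounded on $(r,\infty)$, so all integrals below are finite.

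\emph{Step 1 (the Volterra inequality).} I would bound the two integral terms in \eqref{ineq-f}. Using $f_n(s,y)\le\sqrt{g_n(s)}\,\rho(s,y)$, the semigroup identity $\int_{\bR}G_{t-s}(x-y)\rho(s,y)\,dy=\rho(t,x)$, and then the Cauchy--Schwarz inequality in $s$,
\[
\Big(\int_r^t\!\int_{\bR}G_{t-s}(x-y)f_n(s,y)\,dy\,ds\Big)^{2}\le(t-r)\,\rho^2(t,x)\int_r^t g_n(s)\,ds.
\]
For the term with $G_{t-s}^2$ I would argue exactly as for $\cA_4$ in the proof of Theorem~\ref{key-D2-th}: write $\rho(s,y)=\int_{\bR}G_{s-r}(y-z)\mu(dz)$, use $\|g\|_{\cH}=\sup_{\|h\|_{\cH}=1}\langle g,h\rangle_{\cH}$ with $g(s,y)=G_{t-s}(x-y)f_n(s,y)$, the Cauchy--Schwarz inequality in $(s,y)$, and relation \eqref{G2-G2} for the inner $dy$-integral, obtaining
\[
\int_r^t\!\int_{\bR}G_{t-s}^2(x-y)f_n^2(s,y)\,dy\,ds\le\frac{(t-r)^{1/2}}{\sqrt{4\pi}}\,\rho^2(t,x)\int_r^t g_n(s)\,(t-s)^{-1/2}(s-r)^{-1/2}\,ds;
\]
the passage from a bound for each fixed $z$ to one involving $\rho(t,x)$ is legitimate even for an infinite $\mu$, since the Cauchy--Schwarz estimate factors through $G_{t-r}(x-z)$ and $\int_{\bR}G_{t-r}(x-z)\mu(dz)=\rho(t,x)$. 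Substituting into \eqref{ineq-f} and dividing by $\rho^2(t,x)$ yields, for every $t\in I$,
\[
g_{n+1}(t)\le C_t A^2+C_t(t-r)\int_r^t g_n(s)\,ds+\frac{C_t(t-r)^{1/2}}{\sqrt{4\pi}}\int_r^t g_n(s)\,(t-s)^{-1/2}(s-r)^{-1/2}\,ds.
\]

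\emph{Step 2 (iteration).} Fix $t\in I$; since $C_s$ is non-decreasing I may replace $C_s$ by $C_t$ on $(r,t]$, so that $g_{n+1}\le C_tA^2+(\Phi+\Psi)g_n$ there, where $\Phi,\Psi$ are the positive linear operators $(\Phi h)(u)=C_t(u-r)\int_r^u h(s)\,ds$ and $(\Psi h)(u)=\frac{C_t}{\sqrt{4\pi}}(u-r)^{1/2}\int_r^u h(s)(u-s)^{-1/2}(s-r)^{-1/2}\,ds$. Since $g_0\equiv 0$, monotonicity of $\Phi,\Psi$ gives $g_n(t)\le C_tA^2\sum_{k=0}^{n-1}\big((\Phi+\Psi)^k\mathbf{1}\big)(t)$, where $\mathbf{1}$ is the constant function. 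The computation closes because $\Phi$ and $\Psi$ act explicitly on powers: $\Phi\big((\cdot-r)^a\big)(u)=\frac{C_t}{a+1}(u-r)^{a+2}$ and, by a Beta integral (cf.\ \eqref{beta}), $\Psi\big((\cdot-r)^a\big)(u)=\frac{C_t}{2}\,\frac{\Gamma(a+1/2)}{\Gamma(a+1)}\,(u-r)^{a+1/2}$. Expanding $(\Phi+\Psi)^k\mathbf{1}$ into its $2^k$ words and tracking the exponent — which, after $m$ applications of $\Phi$ and $L$ of $\Psi$, equals $2m+L/2$ — and the accumulated coefficient ($\Phi$-steps contribute at most $(2m-1)!!^{-1}$ and $\Psi$-steps a telescoping factor at most $\sqrt\pi/\Gamma((L+1)/2)$, because the relevant power $a$ only increases along a word), one shows the series converges; three applications are precisely what produce the exponents $(t-r)^6$ (three $\Phi$'s) and $(t-r)^{3/2}$ (three $\Psi$'s) occurring in $\cC_t$, while intermediate exponents $p\in[3/2,6]$ are controlled by $(t-r)^p\le (t-r)^{3/2}+(t-r)^6$. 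This gives $\sum_{k\ge 0}\big((\Phi+\Psi)^k\mathbf{1}\big)(t)\le \cC_t^2/C_t$, hence $g_n(t)\le \cC_t^2 A^2$ for all $n$, which is \eqref{fn-bound}.

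\emph{Main obstacle.} Steps~1 and~2 are conceptually routine: Step~1 is the $\cA_4$ computation with the weight $g_n(s)$ carried along, and Step~2 is a Picard/Volterra iteration. The delicate part is the constant-chasing in Step~2 — organizing the word expansion of $(\Phi+\Psi)^k\mathbf{1}$ so as to get convergence together with the precise constant $\cC_t$ stated; the telescoping of the $\Gamma$-factors is what produces $\Gamma(1/4)$ and the factorial decay that defeats the $2^k$ word count. A minor preliminary point is the local boundedness of each $g_n$, which follows from the same induction.
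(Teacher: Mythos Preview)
Your Step~1 is the same computation as the paper's (the duality argument for the $G^2$ term is identical), with one extra move: you apply Cauchy--Schwarz in $s$ to linearize $(\int_r^t\tilde f_n(s)\,ds)^2$ and arrive at a \emph{linear} Volterra recursion $g_{n+1}\le C_tA^2+(\Phi+\Psi)g_n$. The paper does \emph{not} linearize. It keeps
\[
\tilde f_{n+1}^2(t)\le C_t\Big\{A^2+\Big(\int_r^t\tilde f_n(s)\,ds\Big)^{2}+\frac{(t-r)^{1/2}}{\sqrt{4\pi}}\int_r^t\tilde f_n^2(s)\,(t-s)^{-1/2}(s-r)^{-1/2}\,ds\Big\},
\]
\emph{cubes} both sides, and applies H\"older (exponents $6$ and $3$) to each integral term to obtain a single scalar recursion
\[
\tilde f_{n+1}^6(t)\le 9C_t^3\Big(A^6+\big[(t-r)^5+(4\pi)^{-3/2}B(\tfrac14,\tfrac14)^2(t-r)^{1/2}\big]\int_r^t\tilde f_n^6(s)\,ds\Big).
\]
A standard discrete Gronwall on $[r,t]$ then gives $\tilde f_n^6(t)\le 9C_t^3A^6\exp\{9C_t^3[(t-r)^6+(4\pi)^{-3/2}B(\tfrac14,\tfrac14)^2(t-r)^{3/2}]\}$, and the stated $\cC_t$ is the sixth root. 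So the prefactor $3^{1/3}$, the cube $C_t^3$, and the exponents $(t-r)^6,(t-r)^{3/2}$ all come from the cubing-plus-Gronwall step, not from three iterates of an operator.

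Your word expansion of $(\Phi+\Psi)^k\mathbf 1$ is a legitimate route to \emph{some} finite bound --- the $\Gamma$-ratios and the $1/(a+1)$ factors do give summability --- but it does not deliver the constant $\cC_t$ of the lemma. Your remark that ``three applications produce $(t-r)^6$ and $(t-r)^{3/2}$'' only singles out the words $\Phi^3$ and $\Psi^3$; the full series is infinite and its sum is neither of the shape $3^{1/3}C_t^{1/2}\exp\{\cdot\}$ nor does it contain the specific coefficient $\Gamma(1/4)^4/(8\pi^{5/2})$. For instance $\Psi^3\mathbf 1(t)=\tfrac{\sqrt\pi}{8}C_t^3(t-r)^{3/2}$, which bears no relation to that number. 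The bound $(t-r)^p\le (t-r)^{3/2}+(t-r)^6$ you invoke holds only for $p\in[3/2,6]$, whereas the exponents $2m+L/2$ in the expansion are unbounded, so you cannot cap the series at the $k=3$ level. In short, your strategy proves the lemma up to the value of the constant, but the paper's cubing trick is what pins down $\cC_t$ exactly; if the precise constant matters, Step~2 as written has a gap.
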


\begin{proof}
For all $n\geq 0$ and $t\in I$, define
\[
\widetilde{f}_n(t) := \sup_{x \in \mathbb{R}} \frac{f_n(t,x)}{\rho(t,x)} \in [0,\infty].
\]

\medskip

{\em Step 1.} In this step, we assume that $\widetilde{f}_n(t)<\infty$ for all $t\in I$ (for some fixed $n\geq 0$), and we derive a recurrence relation for $\widetilde{f}_{n+1}$. 
By the semigroup property, we have:
\begin{align} 
	\nonumber
	\int_{\bR^d} G_{t-s}(x-y)f_n(s,y)dy  & \leq \widetilde{f}_n(s) \int_{\bR^d}G_{t-s}(x-y)\rho(s,y)dy \\
	\nonumber
	& =\widetilde{f}_n(s) \int_{\bR} \int_{\bR} G_{t-s}(x-y)G_{s-r}(y-z) \mu(dz)dy\\
	\label{eq:convolution-bound}
	&=\widetilde{f}_n(s) \int_{\bR}G_{t-r}(x-z)\mu(dz)=\widetilde{f}_n(s) \rho(t,x).
\end{align}

Next we bound the $ \int_{\mathbb{R}} G^2_{t-s}(x-y)f^2_n(s,y)dy$ integral using a duality argument:
\begin{align*}
	\int_{\bR^d} G_{t-s}^2(x-y)f^2_n(s,y)dy & 
\leq \widetilde{f}_n^2(s) \int_{\bR^d}G_{t-s}^2(x-y)\rho^2(s,y)dy\\
	&=\widetilde{f}_n^2(s)  \sup_{\|h\|_{L^2(\bR)}=1} \left(\int_\bR \int_\bR G_{t-s}(x-y)G_{s-r}(y-z)\mu(dz)h(y) dy\right)^2.
\end{align*}
Use the Cauchy-Schwarz inequality on the $dy$ integral to bound 
\begin{align*}
	&\sup_{\|h\|_{L^2(\bR)}=1} \left(\int_\bR \int_\bR G_{t-s}(x-y)G_{s-r}(y-z)\mu(dz)h(y)dy\right)^2\\
	&\leq \left(\int_\bR \left(\int_\bR G^2_{t-s}(x-y)G^2_{s-r}(y-z)dy\right)^{\frac{1}{2}}\mu(dz)\right)^2.
\end{align*}
This previous step required $\mu$ to be a nonnegative measure. Then {we use \eqref{G2-G2}} to evaluate the $dy$ integral.
Therefore, we can bound
\begin{equation} \label{eq:convolution-2-bound}
	\int_{\bR^d} G_{t-s}^2(x-y)f^2_n(s,y)dy \leq \widetilde{f}^2_n(s)\left(\frac{t-r}{4\pi(t-s)(s-r)}\right)^{\frac{1}{2}}\rho^2(t,x).
\end{equation}

Then applying \eqref{eq:convolution-bound} and \eqref{eq:convolution-2-bound} to \eqref{ineq-f} leads to the estimate
\begin{align*}
	f^2_{n+1}(t,x) & \leq C_t \rho^2(t,x) \left\{ A^2+ \left(\int_r^t \widetilde{f}_n(s)ds \right)^2+
	\frac{1}{\sqrt{4\pi}} \int_r^t \left(\frac{t-r}{(t-s)(s-r)} \right)^{1/2} \widetilde{f}_n^2(s)ds 
	\right\}.
\end{align*}

Divide by $\rho^2(t,x)$ and take the supremum for $x \in \bR$. We get:
\begin{equation}
	\label{eq:tilde-f-squared}
	\tilde{f}_{n+1}^2(t)  \leq C_t \left\{ A^2+ \left(\int_r^t \widetilde{f}_n(s)ds \right)^2+
	\frac{1}{\sqrt{4\pi}} \int_r^t \left(\frac{t-r}{(t-s)(s-r)} \right)^{1/2} \widetilde{f}_n^2(s)ds 
	\right\}.
\end{equation}
We will bound the 6th power of $\tilde{f}_{n+1}(t)$, but any power bigger than 4 will work. By H\"older's inequality,
\begin{equation}
	\int_r^t \tilde{f}_n(s)ds\leq (t-r)^{\frac{5}{6}} \left(\int_r^t \tilde{f}_n^6(s)ds\right)^{\frac{1}{6}}
\end{equation}
and
\begin{align}
	&\int_r^t  \left(\frac{t-r}{4\pi(t-s)(s-r)}\right)^{\frac{1}{2}}\tilde{f}_n^2(s)ds \nonumber\\
	&\leq \left(\int_{r}^t\left(\frac{t-r}{4\pi(t-s)(s-r)}\right)^{\frac{3}{4}}ds\right)^{\frac{2}{3}}\left(\int_r^t\tilde{f}_n^6(s)ds\right)^{\frac{1}{3}} \nonumber\\
	&\leq (4\pi)^{-\frac{1}{2}}\left(B\left(\frac{1}{4},\frac{1}{4}\right) (t-r)^{\frac{1}{4}} \right)^{\frac{2}{3}} \left(\int_r^t \tilde{f}_n^6(s)ds \right)^{\frac{1}{3}},
\end{align}
where $B\left(\frac{1}{4},\frac{1}{4}\right)$ is a Beta function.

Therefore, raising \eqref{eq:tilde-f-squared} to the third power,
\begin{equation}
\label{rec}
	\tilde{f}^6_{n+1}(t) \leq 3^2{C_t^3} \left( A^6  + \left((t-r)^5  + {(4\pi)^{-\frac{3}{2}}}B\left(\frac{1}{4},\frac{1}{4}\right)^{2}
(t-r)^{\frac{1}{2}} \right) \int_r^t \tilde{f}_n^6(s)ds\right).
\end{equation}

%
%

{\em Step 2.} In this step, we prove by induction on $n\geq 0$ that 
\begin{equation}
\label{induction}
\widetilde{f}_n(t)  <\infty \quad \mbox{for all $t\in I$}.
\end{equation}

The statement holds trivially for $n=0$ since $\tilde{f}_0(t)\equiv 0$. 
For the induction step, we assume that \eqref{induction} holds
for all $k=0,\ldots,n$, and we show that it holds for $n+1$. By Step 1, the recurrence relation \eqref{rec} holds for all $k=0,\ldots,n$.

We fix $t\geq r$ and set $$\alpha = 3^2C_t^3A^6 \quad \mbox{and} \quad \beta = 3^2C_t^3 \left[(t-r)^5 +(4\pi)^{-\frac{3}{2}} B \left(\frac{1}{4},\frac{1}{4}\right)^2(t-r)^{\frac{1}{2}}\right].$$
Then for all $k=0,\ldots,n$ and $s\in [r,t]$,
\[
\widetilde{f}_{k+1}^6(s) \leq \alpha+\beta \int_r^t \widetilde{f}_{k}^6(s)ds. 
\]

By Lemma \ref{gron-classic} below, we infer that for any $k=1,\ldots,n+1$ and $s \in [r,t]$,
$$\tilde{f}_k^6(s) \leq \alpha e^{\beta(s-r)}.$$
In particular, $\tilde{f}_{n+1}(t) <\infty$.

\medskip

{\em Step 3.} Having proved that \eqref{induction} holds for all $n\geq 0$, we can now invoke 
the classical Gronwall lemma (e.g. \cite[Lemma 10.2.4]{kuo06}) on the fixed interval $[r,t]$, for arbitrary $t\in I$. We infer that for all $n\geq 1$ and $s \in [r,t]$,
\[
\tilde{f}_n^6(s) 
\leq 3^2 C_t^3 A^6 \exp\left\{ 3^2C_t^3 \left[(t-r)^5 + (4\pi)^{-\frac{3}{2}}B \left(\frac{1}{4},\frac{1}{4}\right)^2(t-r)^{\frac{1}{2}}\right](s-r)\right\}.
\]

In particular, for $s=t$ 
\begin{equation*}
\tilde{f}_{n}^6(t) 
\leq 3^2 C_t^3 A^6 \exp\left\{ 3^2C_t^3 \left[(t-r)^6 + (4\pi)^{-\frac{3}{2}}B \left(\frac{1}{4},\frac{1}{4}\right)^2(t-r)^{\frac{3}{2}}\right]\right\}.
\end{equation*}

Taking power $1/6$, we conclude that for any $n\geq 1$ and $t\in I$,
\begin{equation*}
	\tilde{f}_n(t) \leq 3^{\frac{1}{3}} C_t^{1/2}  A \exp\left\{\frac{3C_t^3}{2} \left[(t-r)^6 + (4\pi)^{-\frac{3}{2}}B \left(\frac{1}{4},\frac{1}{4}\right)^2(t-r)^{\frac{3}{2}}\right] \right\}.
\end{equation*}

(Note: For $n=1$, $\widetilde{f}_1^2(t) \leq C_t A^2$ gives a sharper bound.)
The conclusion follows using the fact that $B \left(\frac{1}{4},\frac{1}{4}\right)=\frac{\Gamma(1/4)^2}{\sqrt{\pi}}$.

\end{proof}

\begin{lemma}
\label{gron-classic}
Let $(f_n)_{n=0,\ldots,N+1}$ be a sequence of non-negative functions on $[a,b]$ such that for any $n= 0, \ldots, N$ and $t\in [a,b]$
\[
f_{n+1}(t) \leq \alpha(t)+\int_a^t \beta(s) f_n(s)ds
\]
for some non-negative functions $\alpha$ and $\beta$ on $[a,b]$. We assume that $f_n(t)<\infty$ for all $n=0,\ldots,N$, but $f_{N+1}(t)$ may be $\infty$.
Then for any $n=1,\ldots,N+1$ and $t \in [a,b]$
\[
f_n(t)\leq \alpha(t)+\int_0^t \big(\alpha(s) +f_0(s)\big)\beta(s) \exp\left(\int_s^t\beta(r)dr\right)ds.
\]
(Note: For $n=1$, we have the sharper bound: $f_{1}(t) \leq \alpha(t)+\int_a^t \beta(s) f_0(s)ds$.)

In particular, if $\alpha$ is non-decreasing and $f_0=0$, then for any $n=1,\ldots, N+1$ and $t \in [a,b]$,
\[
f_n(t) \leq \alpha(t) \exp \left(\int_a^t \beta(s)ds \right),
\]
and hence $f_{N+1}(t)<\infty$ for all $t \in [a,b]$.
\end{lemma}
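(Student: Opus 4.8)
The plan is to prove the first displayed bound by induction on $n$, comparing each $f_n$ with the explicit ``resolvent'' function
\[
g(t) := \alpha(t) + \int_a^t \big(\alpha(s) + f_0(s)\big)\,\beta(s)\,\exp\Big(\int_s^t \beta(r)\,dr\Big)\,ds,
\]
which is exactly the asserted upper bound. This is a Volterra-type iteration, so induction (rather than the classical Gronwall lemma applied to a single function) is the natural tool, and it tolerates the fact that $f_{N+1}$ may be $+\infty$. Since all integrands below are nonnegative, no step subtracts or divides infinities: whenever $g(t)=+\infty$ the bound $f_n(t)\le g(t)$ is vacuous, so we may restrict to $t$ with $g(t)<\infty$, and since the exponential weight is $\ge 1$ this already forces $\int_a^t \beta(s)\big(\alpha(s)+f_0(s)\big)\,ds<\infty$. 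The assumed finiteness of $f_0,\dots,f_N$ is used only to make the recursion $f_{n+1}(t)\le \alpha(t)+\int_a^t\beta(s)f_n(s)\,ds$ meaningful.

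The main step --- the one I expect to require the most care --- is to verify that $g$ is a \emph{supersolution}, i.e.
\[
\alpha(t) + \int_a^t \beta(s)\,g(s)\,ds \;=\; g(t) - \int_a^t \beta(s) f_0(s)\,ds \;\le\; g(t).
\]
To prove the equality I would substitute the definition of $g(s)$ into the left-hand integral, exchange the order of integration by Tonelli's theorem, and evaluate the resulting inner integral with the identity $\int_u^t \beta(s) e^{B(s)}\,ds = e^{B(t)} - e^{B(u)}$, where $B(t):=\int_a^t\beta(r)\,dr$ (valid since $s\mapsto e^{B(s)}$ is absolutely continuous with a.e.\ derivative $\beta(s)e^{B(s)}$; here we use $\beta\in L^1[a,b]$, which holds in the applications, where $\beta$ is constant). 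The cross terms then cancel and the displayed identity follows.

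Given this, the induction is immediate. For $n=1$ the claimed bound is the hypothesis itself, and it yields $f_1(t)\le g(t)$ because $\int_a^t\beta(s)f_0(s)\,ds \le \int_a^t\beta(s)\big(\alpha(s)+f_0(s)\big)e^{B(t)-B(s)}\,ds$ (using $\alpha\ge 0$ and $e^{B(t)-B(s)}\ge 1$); this is also the sharper $n=1$ statement recorded in the Note. For the inductive step, if $f_n\le g$ on $[a,b]$ then the recursion, monotonicity of the integral, and the supersolution property give $f_{n+1}(t)\le \alpha(t)+\int_a^t\beta(s)f_n(s)\,ds \le \alpha(t)+\int_a^t\beta(s)g(s)\,ds \le g(t)$, which closes the induction for $n=1,\dots,N+1$. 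For the final assertion, assume $\alpha$ non-decreasing and $f_0\equiv 0$: bounding $\alpha(s)\le\alpha(t)$ inside the integral defining $g(t)$ gives $g(t)\le \alpha(t)\big(1+\int_a^t\beta(s)e^{B(t)-B(s)}\,ds\big)$, and $\int_a^t\beta(s)e^{B(t)-B(s)}\,ds = e^{B(t)}\int_a^t\beta(s)e^{-B(s)}\,ds = e^{B(t)}\big(1-e^{-B(t)}\big) = e^{B(t)}-1$, so $g(t)\le \alpha(t)e^{B(t)} = \alpha(t)\exp\big(\int_a^t\beta(s)\,ds\big)$; this is finite under the standing assumptions, whence also $f_{N+1}(t)<\infty$. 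Apart from the bookkeeping around infinities and the Tonelli swap above, I anticipate no real difficulty.
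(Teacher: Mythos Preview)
Your proof is correct and takes a genuinely different route from the paper's. The paper proceeds by \emph{explicit iteration}: starting from $f_2$ and substituting the recurrence repeatedly, it obtains
\[
f_n(t)\le \alpha(t)+\sum_{k=1}^{n-1}\cI_k(t)+\cR_n(t),
\]
where $\cI_k$ and $\cR_n$ are $k$-fold and $n$-fold iterated integrals of $\beta$ terminating in $\alpha$ and $f_0$ respectively; each iterated integral is then rewritten as $\frac{1}{(k-1)!}\big(\int_s^t\beta\big)^{k-1}$ and summed to the exponential. Your approach instead defines the target function $g$ up front, verifies the resolvent identity $\alpha(t)+\int_a^t\beta(s)g(s)\,ds=g(t)-\int_a^t\beta(s)f_0(s)\,ds$ via Tonelli and the fundamental theorem of calculus, and closes a one-line induction. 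Your argument is cleaner and avoids handling the combinatorics of the iterated simplex integrals; the paper's argument is more constructive in that one sees the exponential bound emerge from the Picard series. Both need the implicit hypothesis that $\beta$ is locally integrable (so that $B(t)=\int_a^t\beta$ and the exponentials make sense), which you correctly flag and which the paper uses without comment.
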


\begin{proof} Using the recurrence formula for $f_1$, we see that
\begin{align*}
f_2(t)& \leq \alpha(t)+\int_a^t \beta(s_1)f_1(s_1)ds_1 \\
&\leq \alpha(t)+\int_a^t \beta(s_1) \alpha(s_1)ds_1+\int_{a<s_2<s_1<t} \beta(s_1) \beta(s_2)f_0(s_2)ds_1 ds_2. 
\end{align*}
If we do one more iteration, we get:
\begin{align*}
f_3(t)& \leq \alpha(t)+\int_a^t \beta(s_1)f_2(s_1)ds_1 \\
&\leq \alpha(t)+\int_a^t \beta(s_1) \alpha(s_1)ds_1+\int_{a<s_2<s_1<t} \beta(s_1) \beta(s_2) \alpha(s_2)ds_1 ds_2+\\
& \quad \quad \quad \int_{a<s_3<s_2<s_1<t} \beta(s_1) \beta(s_2) \beta(s_3) f_0(s_3)ds_1 ds_2 ds_3.
\end{align*}

By induction, for any $n=1,\ldots, N+1$, we have:
\begin{align}
\label{ineq-fn-IR}
f_n(t)\leq \alpha(t)+\sum_{k=1}^{n-1}\cI_k(t)+\cR_n(t)
\end{align}

where
\begin{align*}
\cI_k(t)& =\int_{a<s_k<\ldots<s_1<t} \beta(s_1) \ldots \beta(s_k)\alpha(s_k)ds_1 \ldots ds_k\\
\cR_n(t)&=\int_{a<s_n<\ldots<s_1<t} \beta(s_1) \ldots \beta(s_n)f_0(s_n)ds_1 \ldots ds_n.
\end{align*}

Note that
\begin{align*}
\cI_k(t)&=\int_a^t  \alpha(s_k) \beta(s_k)\left(\int_{s_k<s_{k-1}<\ldots<s_1<t}\beta(s_1) \ldots \beta(s_{k-1})ds_1 \ldots ds_{k-1}\right)ds_k\\
&=\int_a^t  \alpha(s) \beta(s) \frac{1}{(k-1)!} \left(\int_{s}^t \beta(r)dr\right)^{k-1}ds
\end{align*}
and hence
\begin{align}
\nonumber
\sum_{k=1}^{n-1} \cI_k(t) & \leq \int_a^t  \alpha(s) \beta(s) \sum_{k\geq 1}\frac{1}{(k-1)!} \left(\int_{s}^t \beta(r)dr\right)^{k-1}ds \\
\label{ineq-sum-Ik}
& \leq \int_a^t  \alpha(s) \beta(s)  \exp\left(\int_{s}^t \beta(r)dr\right)ds.
\end{align}

Similarly,
\begin{align}
\nonumber
\cR_n(t)&=\int_a^t  f_0(s_n) \beta(s_n)\left(\int_{s_n<s_{n-1}<\ldots<s_1<t}\beta(s_1) \ldots \beta(s_{n-1})ds_1 \ldots ds_{n-1}\right)ds_n\\
\nonumber
&=\int_a^t  f_0(s) \beta(s) \frac{1}{(n-1)!} \left(\int_{s}^t \beta(r)dr\right)^{n-1}ds\\
\label{ineq-Rn}
& \leq \int_a^t  f_0(s) \beta(s) \exp\left(\int_{s}^t \beta(r)dr\right)ds
\end{align}

Inserting estimates \eqref{ineq-sum-Ik} and \eqref{ineq-Rn} into \eqref{ineq-fn-IR}
proves the first statement.

Assume now that $\alpha$ is non-decreasing and $f_0=0$ for all $t$. Then
\[
f_n(t)\leq \alpha(t)+\alpha(t) \int_a^t \beta(s)\exp\left(\int_{s}^t \beta(r)dr\right)ds=\alpha(t)\exp\left(\int_a^t \beta(r)dr\right).
\]
\end{proof}

\end{document}